\begin{document}

\title[$\Gamma$-effective adjunction conjecture  with applications]{A variant of the effective adjunction conjecture with applications}


\begin{abstract}
We propose a variant of the effective adjunction conjecture for lc-trivial fibrations. This variant is suitable for inductions and can be used to treat real coefficients.
\end{abstract}

\author{Zhan Li}
\address{Department of Mathematics, Southern University of Science and Technology, 1088 Xueyuan Rd, Shenzhen 518055, China} \email{lizhan@sustech.edu.cn}

\maketitle

\tableofcontents

\section{Introduction}\label{sec: introduction}

Throughout this paper, we work with varieties defined over complex numbers.

Let $(X, D)$ be a log pair with mild singularities. Suppose that $f: X \to Z$ is a projective contraction such that $K_X+D \sim_{Z, \Qq} f^*L$ for some $\Qq$-Cartier divisor $L$ on $Z$. The canonical bundle formula (see \cite{Kaw98}) prescribes divisors $D_{\di}$ and $D_{\mo}$ on $Z$.  The divisor $D_{\di}$ is called the discriminant part which is related to the singularities of $(X, D)$, and $D_{\mo}$ is called the moduli part which is related to a polarization of the ``moduli space'' of the fibers. Moreover, we have $K_Z+D_{\di}+D_{\mo} =L$. The canonical bundle formula is pivotal in translating the study of $(X, D)$ to the lower dimensional variety $Z$.

The most mysterious part in the canonical bundle formula is the moduli part $D_{\mo}$. After certain birational modification of the base, $D_{\mo}$ is known to be nef and abundant (see \cite{Amb05}). $D_{\mo}$ is conjectured to be semi-ample, or even stronger, effectively base-point free (i.e. there exists an effective $m\in \Nn$ such that $mD_{\mo}$ is base-point free) (see \cite[Effective adjunction Conjecture 7.13.3]{PS09}). The effective adjunction conjecture is wide open with the only known case when $f$ is fibered by curves.

The purpose of this paper is to propose a variant of the effective adjunction conjecture ($\Gamma$-effective adjunction). It needs some preparations to state this conjecture and thus the precise statement is left to Section \ref{sec: Gamma effective adjunction conjecture}. We just mention that instead of predicting the moduli part to be effectively base-point free, it is predicted to be effectively $\Gamma$-base-point free. That is, there exists a finite set $\Gamma \subset (0,1]$, such that after a base change, $$K_X+D = f^*(K_Z+\ti { D}_{\di}+\ti  D_{\mo})$$ with $$\ti {D}_{\mo} = \sum r_i \ti {D}_{\mo, i}, $$ where $\sum r_i =1$ and $r_i \in \Gamma$.  There is an effective $m \in \Nn$ such that each $m{D}_{\mo, i}$ is base-point free. Besides, $(Z, \ti D_{\di})$ has the same type of singularities as $(X, D)$ in terms of discrepancies.

The $\Gamma$-effective adjunction conjecture is weaker than the effective adjunction conjecture when $(X, D)$ is klt (see Theorem \ref{thm: Q-effective adjunction implies weak effective Gamma-base point freeness}). However, $\Gamma$-effective adjunction could be used as a substitute for the effective adjunction in many applications (see Section \ref{sec: application}). The advantages of the $\Gamma$-effective adjunction lie in at least two aspects. 

(1) It is more suitable for the induction purpose. In fact, it can be put in the framework of the minimal model program. We show that the $\Gamma$-effective adjunction conjecture can be derived from the $K_X \sim_{Z, \Qq} 0$ and the relative Picard number one cases (see Proposition \ref{prop: CY and Picard 1 is enough}).

(2) It can be used to treat real coefficients. The effective adjunction only makes sense for rational coefficients.

Along the way to show Theorem \ref{thm: Q-effective adjunction implies weak effective Gamma-base point freeness}, we establish two decomposition theorems (see Theorem \ref{thm: decomposition theorem: infinite Gamma}, \ref{thm: decomposition theorem: Gamma finite}). These results are analogies of \cite{Kaw14, Nak16} for a DCC coefficient set instead of a finite set. 

Finally, we should point out that \cite{Flo14, FL19} develop inductive approaches towards the effective adjunction conjecture from other perspectives. It is an interesting question to adopt their methods in the current context.

We describe the structure of the paper. In Section \ref{sec: preliminaries}, we introduce definitions/notation and collect relevant results. The $\Gamma$-effective adjunction conjecture is stated in Section \ref{sec: Gamma effective adjunction conjecture}. Section \ref{sec: decomposition theorems} is devoted to the decomposition theorems. We study the relations between the effective adjunction conjecture and the $\Gamma$-effective adjunction conjecture in Section \ref{sec: relation between conjectures}. In Section \ref{sec: application}, we provide applications of the $\Gamma$-effective adjunction conjecture towards the boundedness problem of log canonical models and distributions of the Iitaka volumes studied in \cite{Li20a}.

\medskip

\noindent\textbf{Acknowledgements}.
This work is partially supported by a starting grant from SUSTech.

\section{Preliminaries}\label{sec: preliminaries}

\subsection{Notation and conventions}\label{subsec: Notation and conventions}
For a morphism $f: Y \to X$, we write $f: Y \to X/U$ if $f$ is a morphism over $U$. When $f$ is birational, and $B$ is a divisor on $X$, then $f_*^{-1}B$ denotes the strict transform of $B$ on $Y$. We say that $f$ is a contraction if $f_*\Oo_Y = \Oo_X$. If $f$ is a surjective morphism, then for a divisor $D$ on $Y$, we write $D^h$ (resp. $D^v$) to denote the horizontal (resp. vertical) part of $D$ over $X$. A subset $I \subset \Rr$ is called an ACC (resp. a DCC) set if it satisfying the ascending chain condition (resp. descending chain condition) with respect to the order $ \leq$. We write $D \in I$ if all the coefficients of $D$ belong to $I$.

We use $\Nn$ to denote the set of positive integers. For $k=\Zz, \Qq, \Rr$, and two divisors $A, B \in k$ on a variety $X$ over $U$, $A \sim_{U,k} B$ means that $A$ and $B$ are $k$-linearly equivalent over $U$. When $k=\Zz$ or $U=\spec \Cc$, we omit $k$ or $U$. $A \equiv B/U$ means that $A$ and $B$ are numerically equivalent over $U$

Let $X$ be a normal variety over $U$ and $B$ be an $\Rr$-divisor on $X$, then $(X, B)/U$ is called a log pair over $U$. We assume that $K_X+B$ is an $\Rr$-Cartier divisor for a log pair. Besides, $U$ is usually omitted if this is clear from the context. For a prime divisor $D$ over $X$, if $f: Y \to X$ is a birational morphism from a normal variety $Y$ such that $D$ is a divisor on $Y$, then the log discrepancy of $D$ with respect to $(X, B)$ is defined to be $a(D; X, B)\coloneqq \mult_{D}(K_Y-f^*(K_X+B))+1$. This definition is independent of the choice of $Y$. A log pair $(X, B)$ is called sub-klt (resp. sub-lc) if the log discrepancy of any divisor over $X$ is $>0$ (resp. $\geq 0$). If $B \geq 0$, then a sub-klt (resp. sub-lc) pair $(X, B)$ is called klt (resp. lc). The set of lc places of $(X, B)$ is denoted by
\begin{equation}\label{eq: LCP}
{\rm LCP}(X, B) \coloneqq \{D \mid D \text{~is a divisor over~} X \text{~with log discrepancy~} 0\}.
\end{equation}

Suppose that $(X, B)$ is a sub-lc pair and $M> 0$ is $\Rr$-Cartier. The log canonical threshold of $M$ with respect to $(X, B)$ is
\[
\lct(M;X, B) \coloneqq \sup\{t \in \Rr \mid (X, B+tM) \text{~is sub-lc}\}.
\]

Assume that $B_i, 1 \leq i \leq q$ are $\Rr$-divisors. For an $\Rr$-divisor $B$, if there exist $r_i \geq 0$ such that $B = \sum_{i=1}^q r_i B_i$ with $\sum_{i=1}^q r_i=1$, then we write
\[
B \in \Conv(B_1, \ldots, B_q).
\] In this case, we also write 
\[
(X, B) = \sum_{1 \leq i \leq q} r_i (X, B_i).
\]

Let $X$ be a normal variety. An integral b-divisor over $X$ is an element:
\[
\mathbf D \in \mathbf{Div} X = \lim_{Y \to X} \mathrm{Div} Y,
\] where the projective limit is taken over all birational models $f: Y \to X$ proper over $X$, under the push-forward homomorphism $f_*: \mathrm{Div} Y \to \mathrm{Div} X$. If $\mathbf D = \sum d_\Gamma \Gamma$ is a b-divisor on $X$, and $Y \to X$ is a birational model of $X$, then the trace of $\mathbf D$ on $Y$ is the divisor 
\[
\mathbf D_Y \coloneqq \sum_{\Gamma \text{~is a divisor on~} Y} d_\Gamma \Gamma.
\] B-divisors with coefficients in $\Qq$ or $\Rr$ are defined similarly. Let $f: Y \to X$ be a birational morphism, we write $\overline{\bf D}_Y = {\bf D}$ if the b-divisor ${\bf D}$ is obtained by the pullback from ${\bf D}_Y$. 

\begin{definition}\label{def: bpf}
For a $\Qq$-b-divisor ${\bf D}$, it is called semi-ample (resp. base-point free) if there exists a birational morphism $Y \to X$ such that $\overline{\bf D}_Y = {\bf D}$ with ${\bf D}_Y$ semi-ample (resp. base-point free). 
\end{definition}

For a b-divisor {\bf D}, the pair $(X, {\bf D})$ is called lc (resp. klt) if ${\bf D}_X \geq 0$ and $(Y, {\bf D}_Y)$ is sub-lc (resp. sub-klt) for any birational model $Y \to X$ with $Y$ a $\Qq$-factorial variety. The sheaf $\Oo_X({\bf D})$ on $X$ is defined to be $\Oo_X({\bf D}_X)$. For more on b-divisors, see \cite{Sho96, Amb04} and \cite[\S 2.3]{Cor07}. 

\subsection{Canonical bundle formula}\label{subsec: canonical bundle formula}

Suppose that $(X, B)$ is a log pair with $B$ an $\Rr$-divisor. The discrepancy b-divisor $\mathbf A = \mathbf A(X, B)$ is the $\Rr$-b-divisor of $X$ with the trace $\mathbf A_Y$ defined by the formula $K_Y =f_*(K_X+B)+\mathbf A_Y$, where $f: Y \to X$ is a proper birational morphism of normal varieties. Similarly, we define $\mathbf A^* = \mathbf A^*(X, B)$ by
\[
\mathbf A_{Y}^{*} = \sum_{a_i >-1} a_i A_i
\] for $K_Y =f^*(K_X +B)+\sum a_i A_i$. The following definition is slightly different from \cite[Definition 2.1]{Amb04} and \cite[\S 3]{FG14}.

\begin{definition}[Klt-trivial and lc-trivial fibrations]\label{def: klt-trivial fibrations}
A klt-trivial (resp. lc-trivial) fibration $f: (X,B) \to Z$ consists of a projective contraction $f: X \to Z$ between  quasi-projective normal varieties and a pair $(X,B)$ satisfying the following properties:
\begin{enumerate}
\item $(X, B)$ is sub-klt (resp. sub-lc) over the generic point of $Z$,
\item ${\rm rank} f_*\Oo_X (\lceil \mathbf A(X, B)\rceil) = 1$ (resp. ${\rm rank} f_*\Oo_X (\lceil \mathbf A^*(X, B)\rceil) = 1$), and
\item there exists an $\Rr$-Cartier $\Rr$-divisor $L$ on $Z$ such that 
\[K_X +B\sim_{\Rr} f^*L.\]
\end{enumerate}
\end{definition}

Let $f: (X, B) \to Z$ be an lc-trivial fibration and $P$ be a prime divisor on $Z$. Because $Z$ is normal, after shrinking around $P$, we can assume that $P$ is Cartier. Define
\begin{equation}\label{eq: lc at center}
\begin{split}
\lct(\eta_P; X, B)\coloneqq \max\{&t \in \Rr \mid (X, B+tf^*P) \\
&\text{~is sub-lc over the generic point of~}P\}
\end{split}
\end{equation} and set
\begin{equation}\label{eq: def of div and mod}
{D}_{\di,Z} \coloneqq \sum_{P} (1-\lct(\eta_P; X, B)) P, \quad {D}_{\mo,Z} \coloneqq L-(K_Z+{D}_{\di,Z}).
\end{equation} Then the following canonical bundle formula holds
\begin{equation}\label{eq: canonical bundle formula}
K_X+B\sim_\Qq f^*(K_Z+{D}_{\di,Z}+{D}_{\mo,Z}).
\end{equation}

In this formula, ${D}_{\di,Z}$ is called the discriminant part (or divisorial part) and ${D}_{\mo,Z}$ is called the moduli part. When $(X, B)$ is lc, there exist $\Rr$-b-divisors ${\bf D}_{\di}$ and ${\bf D}_{\mo}$ of $X$ such that the traces $(\mathbf D_{\di})_Z={D}_{\di,Z}$ and $(\mathbf D_{\mo})_Z={D}_{\mo,Z}$. Moreover, for birational morphisms $p: Z' \to Z$, $q: X' \to X$ and a morphism $f': X' \to Z'$ such that $f \circ q = p \circ f'$, we have
\begin{equation}\label{eq: b-boundary property}
q^*({K_X+B}) \sim_{\Rr} f'^*(K_{Z'}+({\bf D}_{\di})_{Z'}+({\bf D}_{\mo})_{Z'}).
\end{equation} This is shown in \cite[Theorem 0.2, Theorem 2.7]{Amb04} for $\Qq$-divisors. But the actual computation does not need this assumption (see \cite[Lemma 2.6]{Amb04}).

By inversion of adjunction (see \cite[Theorem 3.1]{Amb04}), when $(X, B)$ is klt (resp. lc), then $(Z, {\bf D}_{\di})$ is also klt (resp. lc).

 Moreover, when $K_X+B$ is $\Qq$-Cartier, then $\mathbf D_{\mo}$ is b-nef and b-abundant in the sense that there is a proper birational morphism $Z' \to Z$ and a proper surjective morphism $h: Z' \to W$ between normal varieties such that (1) $({\bf D}_{\mo})_{Z'} \sim_{\Qq} h^*H$ for some nef and big $\Qq$-divisor $H$ on $W$, and (2) ${\bf D}_{\mo} = \overline{({\bf D}_{\mo})_{Z'}}$ (i.e. ${\bf D}_{\mo}$ is the pullback of $({\bf D}_{\mo})_{Z'}$). For details, see \cite[Theorem 3.3]{Amb05} and \cite[Theorem 1.1]{FG14}.
 
 Finally, when the coefficients of $B$ belong to a DCC set $I$, then by \eqref{eq: lc at center} and Theorem \ref{thm: ACC of LCT} below, the coefficients of $({\bf D}_{\di})_Z$ belong to a DCC set $J$ which depends only on $I$ and $\dim X$.

\subsection{A collection of relevant results}\label{subsec: relevant results}

\begin{theorem}[{ACC for the log canonical threshold \cite[Theorem 1.1]{HMX14}}]\label{thm: ACC of LCT}
Fix a positive integer $d$, $I \subset (0,1]$ and a subset $J$ of the positive real numbers. If $I$ and $J$ satisfy the DCC, then 
\[
\{\lct(X, \De; M) \mid (X, \De) \text{~is lc with~} \dim X=d, \De \in I, M\in J\}
\] satisfies the ACC.
\end{theorem}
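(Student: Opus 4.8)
The plan is to prove the statement by induction on the dimension $d$, following \cite{HMX14}, carrying along in the same induction the companion \emph{global ACC}: for a fixed DCC set $I\subset[0,1]$ and dimension at most $d$, the coefficients occurring in lc pairs $(X,B)$ with $B\in I$ and $K_X+B\equiv 0$ form a finite set. The two statements must be handled together: ACC for the log canonical threshold in dimension $d$ will be deduced from the inductive hypotheses in dimension $<d$, while the global ACC in dimension $d$ will be deduced from ACC for the log canonical threshold in dimension $\le d$ together with the global ACC in dimension $<d$. There is no circularity, since in each dimension the lct statement is established before the global one, and the cases $d\le 1$ are elementary.

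Suppose for contradiction that there are lc pairs $(X_i,\De_i)$ with $\dim X_i=d$, $\Rr$-Cartier divisors $M_i>0$, and $\De_i\in I$, $M_i\in J$, such that $t_i\coloneqq\lct(M_i;X_i,\De_i)$ is strictly increasing. Then each $(X_i,\De_i+t_iM_i)$ is lc but not klt, and one may pick an lc place $E_i$ of $(X_i,\De_i+t_iM_i)$ whose multiplicity along the pullback of $M_i$ is positive — such a place must exist, for otherwise log canonicity would persist for $t$ slightly larger than $t_i$. Using the existence of dlt modifications (a consequence of the minimal model program), extract $E_i$ by a projective birational morphism $\pi_i\colon Y_i\to X_i$ with $Y_i$ $\Qq$-factorial, carrying $E_i$ as a prime divisor $S_i$, so that $(Y_i,\Phi_i)$ is dlt with $K_{Y_i}+\Phi_i=\pi_i^*(K_{X_i}+\De_i+t_iM_i)$ and $\mult_{S_i}\Phi_i=1$; in particular $K_{Y_i}+\Phi_i\equiv 0$ over $X_i$.

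Now adjoin to $S_i$: writing $K_{S_i}+\Theta_i=(K_{Y_i}+\Phi_i)|_{S_i}$, the standard analysis of the different (invoking \cite{HMX14} in lower dimension) shows that the coefficients of $\Theta_i$ lie in a DCC set depending only on $I$, $J$ and $d$, and — crucially — the term $t_iM_i$ leaves a visible trace in them, so $t_i$ is still recorded downstairs. If $S_i$ is exceptional over $X_i$, then after a relative minimal model program and Stein factorization it carries an lc-trivial fibration over a variety $W_i$ of dimension $\le d-2$ with $K_{S_i}+\Theta_i\equiv 0$ over $W_i$; the canonical bundle formula \eqref{eq: canonical bundle formula} and inversion of adjunction (see \cite{Amb04}) then give an lc pair $(W_i,{D}_{\di,W_i})$ of dimension $<d$ whose discriminant coefficients again lie in a DCC set depending on $I,J,d$ and still reflect $t_i$, and the global ACC in dimension $<d$ forces $\{t_i\}$ to be finite — a contradiction. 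If instead $E_i$ is already a divisor on $X_i$, adjunction produces a $(d-1)$-dimensional lc pair on $E_i$ whose different exhibits $t_i$ as a log canonical threshold, and ACC for the log canonical threshold in dimension $d-1$ gives the contradiction directly.

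I expect the main obstacle to be organizing this simultaneous induction, together with the parallel reduction for the global ACC (run an MMP on the divisorial component whose coefficient is growing, reach a Mori fiber space over a lower-dimensional base, and descend again by adjunction and the canonical bundle formula). A second, more technical difficulty is to check at every descent step that $t_i$ genuinely survives as a \emph{non-stationary} coefficient of the lower-dimensional pair, and to control the geometry of the non-klt center $S_i$, which may be reducible or badly singular before the dlt modification. Everything rests on the full machinery of the minimal model program, used both to extract lc places and to run and terminate the auxiliary minimal model programs.
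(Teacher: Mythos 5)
This statement is a quoted result: the paper presents Theorem~\ref{thm: ACC of LCT} verbatim as \cite[Theorem 1.1]{HMX14} in the subsection ``A collection of relevant results'' and offers no proof of it. There is therefore no proof of this statement in the paper against which to compare your attempt.

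Evaluated on its own merits, your sketch correctly captures the top-level architecture of the \cite{HMX14} argument: one proves ACC for log canonical thresholds and the global ACC for numerically trivial pairs together by induction on dimension, extracts an lc place by a dlt modification, and descends by adjunction (or by the canonical bundle formula over a Mori fibre space) to a lower-dimensional pair. But the part you flag at the end as ``the main obstacle'' --- checking that the threshold $t_i$ genuinely survives as a coefficient that varies along the sequence, and that the resulting boundary lies in a DCC set --- is precisely where the substance of \cite{HMX14} lives, and the sketch does not actually address it. In particular, you make no mention of the boundedness inputs that power the descent: the DCC of volumes and the birational boundedness statement (\cite[Theorem 1.3]{HMX14}), the finiteness of log canonical models, and the deformation invariance of plurigenera, all of which are needed to close the induction and none of which follow from adjunction alone. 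The reduction ``$K_{S_i}+\Theta_i \equiv 0$ over $W_i$, apply global ACC in lower dimension'' is also too quick: the numerical triviality you have is relative over $X_i$, not absolute, and converting the local situation into the global one to which Theorem D applies requires additional geometry. As written this is a faithful plan of the HMX14 strategy rather than a proof; the hard steps are named but not carried out.
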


\begin{theorem}[{ACC for numerically trivial pairs \cite[Theorem D]{HMX14}}]\label{thm: ACC of num trivial}
Fix a positive integer $d$ and a set $I \subset (0,1]$, which satisfies the DCC. Then there is a finite subset $I_0\subset I$ with the following property: 

If $(X, \De)$ is a log pair such that
\begin{enumerate}
\item $X$ is projective of dimension $d$, 
\item the coefficients of $\De$ belong to $I$, 
\item  $(X, \De)$ is lc, and
\item $K_X+\De$ is numerically trivial,
\end{enumerate}
then the coefficients of $\De$ belong to $I_0$.
\end{theorem}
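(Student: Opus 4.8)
The plan is to follow the strategy of \cite{HMX14} (this statement is their Theorem D): induct on $d=\dim X$, argue by contradiction, and play the canonical bundle formula and adjunction against the ACC for log canonical thresholds (Theorem~\ref{thm: ACC of LCT}) and the minimal model program. Suppose the statement fails for some $d$ and DCC set $I$. Then there are lc pairs $(X_i,\De_i)$ with $X_i$ projective of dimension $d$, $\De_i\in I$, $K_{X_i}+\De_i\equiv 0$, and the set of all coefficients appearing is infinite; since $I$ is DCC, after passing to a subsequence we may choose a prime component $S_i\subseteq\De_i$ with $a_i\coloneqq\mult_{S_i}\De_i$ strictly increasing, $a_i\nearrow a\in(0,1]$. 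Replacing $X_i$ by a small $\Qq$-factorialization (which changes neither $\De_i$ nor its coefficients) we may assume $X_i$ is $\Qq$-factorial. When $d=1$ the boundary of an lc pair on a curve has coefficients in $[0,1]$ summing to $2$, so the DCC forces finiteness; assume the result in dimensions $<d$. Throughout, $\De_i^{Y}$ denotes the birational transform of $\De_i$ on a model $Y$.

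\emph{Lowering the dimension.} If $\De_i$ has a component $T_i$ of coefficient $1$, then by adjunction $K_{T_i}+\De_{T_i}=(K_{X_i}+\De_i)|_{T_i}\equiv 0$ with $(T_i,\De_{T_i})$ lc of dimension $d-1$ and $\De_{T_i}$ in a DCC set depending only on $I$ (adjunction sends a DCC coefficient set to a DCC coefficient set); induction bounds the coefficients of $\De_{T_i}$, and the explicit formula for the different then bounds the coefficients of the components of $\De_i$ meeting $T_i$. In general, run a $(K_{X_i}+\De_i-\varepsilon S_i)$-MMP: since $K_{X_i}+\De_i\equiv 0$, every step is trivial for $K_{X_i}+\De_i$, preserves $K+\De\equiv 0$, and contracts only $S_i$-positive curves, so the transform of $S_i$ is never removed by a divisorial contraction (the exceptional divisor of such a contraction is negative on the contracted ray). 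If the MMP ends with a Mori fibre space $g\colon Y_i\to W_i$, the last extremal ray is $S_i$-positive on the fibres, so the transform of $S_i$ is horizontal; restricting $K_{Y_i}+\De_i^{Y_i}\equiv 0$ to a general fibre $F_i$ gives an lc pair $(F_i,\De_i^{Y_i}|_{F_i})$ of dimension $<d$, numerically trivial, with coefficients in $I$ including $a_i$, so induction puts $a_i$ in a finite set, contradicting strict monotonicity.

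\emph{The remaining case.} Otherwise the MMP ends with a minimal model $Y_i$ on which $-\varepsilon S_i^{Y_i}$ is nef and $S_i^{Y_i}\ge 0$ is nonzero, with $K_{Y_i}+\De_i^{Y_i}\equiv 0$. Running further steps (now contracting the transform of $S_i$) one reaches an lc pair $(V_i,\Phi_i)$ with $K_{V_i}+\Phi_i\equiv 0$, $\Phi_i\in I$, crepant-birational to $(X_i,\De_i)$, on which $S_i$ has become an \emph{exceptional} divisor $E_i$ with $a(E_i;V_i,\Phi_i)=1-a_i$; on a $\Qq$-factorial model $h_i\colon V_i'\to V_i$ extracting only $E_i$ one has $K_{V_i'}+\Phi_i'+a_iE_i=h_i^*(K_{V_i}+\Phi_i)\equiv 0$ with $(V_i',\Phi_i'+a_iE_i)$ lc and the coefficients of $\Phi_i'$ in $I$ (here $\Phi_i'$ is the strict transform of $\Phi_i$). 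Thus $\lct(V_i',\Phi_i';E_i)\ge a_i$, and by Theorem~\ref{thm: ACC of LCT} the set of these thresholds satisfies the ACC. The numerical triviality $K_{V_i'}+\Phi_i'+a_iE_i\equiv 0$ together with the exceptionality of $E_i$ over $V_i$ is used to force this threshold to equal $a_i$ for infinitely many $i$: if it were strictly larger, raising the coefficient of $E_i$ would keep the pair lc while making $K+(\text{boundary})$ numerically equal to a nonzero effective divisor supported on the $h_i$-exceptional locus, which cannot happen for a numerically trivial — as opposed to merely pseudoeffective — configuration. Once $a_i$ is realized as such a threshold for infinitely many $i$, strict monotonicity contradicts the ACC, completing the induction.

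\emph{Main obstacle.} The crux is the bookkeeping of the last two paragraphs: running the minimal model program so that the problematic component $S_i$ is faithfully tracked — never lost to a flip or a contraction without being recorded as an exceptional divisor of controlled log discrepancy — and extracting from $K+\De\equiv 0$ the precise statement that $a_i$, rather than some strictly larger number, is the relevant log canonical threshold, so that Theorem~\ref{thm: ACC of LCT} applies. Ruling out the configurations in which the threshold jumps is exactly where \cite[Theorem D]{HMX14} concentrates its effort, and it is here that termination and finiteness results for the minimal model program are needed.
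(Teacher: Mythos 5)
This statement is not proved in the paper at all: it is quoted verbatim, with proof, from \cite[Theorem D]{HMX14} (global ACC), a deep theorem whose proof occupies a large part of that paper. So the only way a blind attempt could succeed is by actually reproving it, and your sketch does not do that. The general strategy you outline (argue by contradiction, track an offending component $S_i$ through an MMP, reduce dimension by adjunction or Mori fibre spaces, and play the resulting thresholds against Theorem~\ref{thm: ACC of LCT}) is indeed the right family of ideas, but two steps that carry the whole weight are asserted rather than proved. First, the MMP bookkeeping: for an lc pair, a small $\mathbb{Q}$-factorialization need not exist (one must pass to a dlt modification, which introduces coefficient-one components), and termination of the $(K_{X_i}+\Delta_i-\varepsilon S_i)$-MMP is not known for lc pairs in general; making this run requires exactly the reductions (dlt modification, perturbation to klt, non-pseudo-effectivity and MMP with scaling) that the sketch skips over.

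Second, and more seriously, the decisive claim that the increasing coefficients $a_i$ are realized as log canonical thresholds, so that Theorem~\ref{thm: ACC of LCT} yields the contradiction, is not established. Your argument — that if $\operatorname{lct}(V_i',\Phi_i';E_i)>a_i$ then $K_{V_i'}+\Phi_i'+tE_i$ would be numerically equivalent to a nonzero effective $h_i$-exceptional divisor, ``which cannot happen for a numerically trivial configuration'' — is not a contradiction as stated: once the coefficient is raised above $a_i$ the pair is no longer numerically trivial, and there is nothing inconsistent about a log canonical divisor being numerically equivalent to an effective exceptional divisor without a further negativity or MMP argument. You acknowledge this yourself in the final paragraph, deferring precisely this point to \cite{HMX14}; but that point is the heart of the theorem. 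In \cite{HMX14} it is handled by an intertwined induction (global ACC in dimension $d-1$ together with ACC for log canonical thresholds in dimension $d$, adjunction to a coefficient-one component and control of the coefficients of the different), none of which is actually carried out in the proposal. As it stands the proposal is an outline of the known strategy with the essential step missing, not a proof.
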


The following result is a generalization of \cite{Kaw14} where $X$ is assumed to be a fixed variety.

\begin{theorem}[{\cite[Theorem 1.6]{Nak16}}]\label{thm: Nak}
Fix $d\in \Nn$. Let $r_1, \ldots, r_{c'}$ be positive real numbers, and let $r_0 = 1$. Assume that $r_0, \ldots, r_{c'}$ are $\Qq$-linearly independent. Let $s_1, \ldots, s_c$: $\Rr^{c'+1} \to \Rr$ be $\Qq$-linear functions from $\Rr^{c'+1}$ to $\Rr$ (that is, the extensions of $\Qq$-linear functions from $\Qq^{c'+1}$ to $\Qq$ by taking the tensor product $\otimes_\Qq \Rr$). Assume that $s_i(r_0, \ldots, r_{c'}) \in \Rr{\geq 0}$ for each $i$. Then there exists a positive real number $\ep > 0$ such that the following holds: For any $\Qq$-Gorenstein normal variety $X$ of dimension $d$ and $\Qq$-Cartier effective Weil divisors $D_1, \ldots, D_c$ on $X$, if $(X, \sum_{1\leq i\leq c} s_i(r_0, \ldots, r_{c'})D_i)$ is lc, then $(X, \sum_{1\leq i\leq c} s_i(r_0, \ldots, r_{c'-1},t)D_i)$ is also lc for any $t$ satisfying $|t - r_{c'}| \leq \ep$.
\end{theorem}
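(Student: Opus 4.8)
The plan is to extract from a log resolution a linear‑algebra statement that already settles the case of a \emph{fixed} $X$, and then to upgrade it to a statement uniform over all $d$-dimensional $X$ by means of the ACC for log canonical thresholds (Theorem~\ref{thm: ACC of LCT}).

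\emph{Reduction for fixed $X$.} Write $s_i=\sum_{k=0}^{c'}a_{ik}x_k$ with $a_{ik}\in\Qq$, put $\alpha_i:=\sum_{k=0}^{c'-1}a_{ik}r_k$ and $\beta_i:=a_{ic'}\in\Qq$, so that the boundary at parameter $t$ is $\Delta(t):=\sum_i(\alpha_i+\beta_it)D_i$, whose coefficient of $D_i$ equals $s_i(r_0,\dots,r_{c'})\ge 0$ at $t=r_{c'}$. Discarding the indices with $s_i\equiv 0$, the $\Qq$-linear independence of $r_0,\dots,r_{c'}$ forces $s_i(r_0,\dots,r_{c'})>0$ for the remaining ones, so there is $\ep_0>0$, depending only on the $a_{ik}$, with all $\alpha_i+\beta_it>0$ for $|t-r_{c'}|\le\ep_0$; in particular $\Delta(t)\ge 0$ there. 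Fix now $X$ and the $D_i$ with $(X,\Delta(r_{c'}))$ lc, and let $g\colon Y\to X$ be a log resolution of $(X,\sum_iD_i)$. For every prime divisor $E$ on $Y$,
\[
a(E;X,\Delta(t))=a(E;X,0)-\sum_i(\alpha_i+\beta_it)\,\mult_E(g^*D_i)=P_E-tQ_E,
\]
an affine function of $t$ with $Q_E=\sum_i\beta_i\,\mult_E(g^*D_i)\in\Qq$ and $P_E\in\Qq r_0+\dots+\Qq r_{c'-1}$, and $(X,\Delta(t))$ is lc iff $P_E-tQ_E\ge 0$ for the finitely many $E$ appearing on $Y$. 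The crucial point: if $a(E;X,\Delta(r_{c'}))=0$, i.e.\ $P_E=r_{c'}Q_E$, then $\Qq$-linear independence of $r_0,\dots,r_{c'}$ forces $Q_E=0$. Hence for each relevant $E$ either $Q_E=0$, so $a(E;X,\Delta(t))=a(E;X,\Delta(r_{c'}))\ge 0$ for every $t$, or else $a(E;X,\Delta(r_{c'}))>0$ strictly; since there are finitely many such $E$, this proves the theorem for fixed $X$, and identifies the quantity to be bounded below independently of $X$ as $\min_{E:\,Q_E\neq0}a(E;X,\Delta(r_{c'}))/|Q_E|>0$.

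\emph{Uniformity.} I would argue by contradiction, treating $t>r_{c'}$ and $t<r_{c'}$ separately (they are symmetric under $I^{+}\leftrightarrow I^{-}$), say $t>r_{c'}$. If it fails, there are $\Qq$-Gorenstein normal $X_n$ of dimension $d$ and effective $\Qq$-Cartier Weil divisors $D_1^{(n)},\dots,D_c^{(n)}$ with $(X_n,\Delta_n(r_{c'}))$ lc, together with $t_n\downarrow r_{c'}$ such that $(X_n,\Delta_n(t_n))$ is not lc. Replacing $t_n$ by the supremum $t_n^{*}\in[r_{c'},t_n)$ of parameters where the pair stays lc — the lc locus in $t$ being closed — yields $t_n^{*}\downarrow r_{c'}$, a lc pair $(X_n,\Delta_n(t_n^{*}))$, and a divisorial lc place $E_n$ of it with $Q_{E_n}>0$; by the computation above $t_n^{*}-r_{c'}=a(E_n;X_n,\Delta_n(r_{c'}))/Q_{E_n}$, which is $>0$ (again by $\Qq$-independence) and tends to $0$. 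Two reductions help. First, the lc hypothesis at $r_{c'}$ bounds the $D_i^{(n)}$: since $\Delta_n(r_{c'})=\sum_is_i(r_0,\dots,r_{c'})D_i^{(n)}$ has all coefficients $\le1$ and each $s_i(r_0,\dots,r_{c'})>0$, every multiplicity of $D_i^{(n)}$ is at most $\lfloor 1/s_i(r_0,\dots,r_{c'})\rfloor$, so $\Delta_n(r_{c'})$ has coefficients in a fixed finite set, and the positive and negative parts $M_n^{\pm}$ of $\sum_i\beta_iD_i^{(n)}$ likewise. Second, one can extract $E_n$: running a minimal model program to realise $E_n$ on a birational model of $X_n$ and applying adjunction produces on $E_n$ an lc pair of dimension $d-1$ whose boundary lands in a DCC set controlled by Theorem~\ref{thm: ACC of LCT}. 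The aim is then to turn ``$t_n^{*}-r_{c'}\downarrow 0$'' into an infinite strictly increasing chain of log canonical thresholds of $(d$- or $(d-1)$-dimensional$)$ pairs with boundary and auxiliary divisor in fixed DCC sets, contradicting Theorem~\ref{thm: ACC of LCT}; I expect this to proceed by induction on $d$, with $\dim X=1$ as the elementary base.

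\emph{Main obstacle.} The heart of the matter — the only place needing more than the fixed‑$X$ argument — is this last conversion, and it is delicate for two reasons. The perturbation direction $\sum_i\beta_iD_i^{(n)}$ need not be effective, so the identification of $t_n^{*}-r_{c'}$ with an honest log canonical threshold of $M_n^{+}$ over $(X_n,\Delta_n(r_{c'}))$ is exact only when no $\beta_i<0$: an lc place meeting the negative part $M_n^{-}$ with equal weight does not actually obstruct $\Delta_n(t)$, although it does obstruct $\Delta_n(r_{c'})+\lambda M_n^{+}$. And the slopes $Q_{E_n}$ need not be bounded, so $a(E_n;X_n,\Delta_n(r_{c'}))$ itself need not tend to $0$. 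Handling both — keeping track of the two sign patterns, and using the rationality of all log canonical thresholds of $\Qq$-Cartier divisors on $\Qq$-Gorenstein varieties, together with the $\Qq$-irrationality of the numbers $s_i(r_0,\dots,r_{c'})$, to control the accumulation behaviour permitted by Theorem~\ref{thm: ACC of LCT} — is what makes Theorem~\ref{thm: Nak} nontrivial once one leaves a single fixed variety.
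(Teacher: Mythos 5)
This statement is not proved in the paper at all: it is quoted verbatim from \cite[Theorem 1.6]{Nak16} and used as a black box (and, separately, the paper extracts an extra consequence about lc places in Theorem~\ref{thm: Nak Corollary} by re-reading Nakamura's argument). So there is no in-paper argument to compare your proposal against; the relevant comparison is with Nakamura's published proof.

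Your fixed-$X$ reduction is correct and is exactly the Kawakita \cite{Kaw14} argument: on a log resolution each discrepancy is an affine function $P_E - tQ_E$ with $P_E \in \Qq r_0 + \cdots + \Qq r_{c'-1}$ and $Q_E \in \Qq$, and $\Qq$-linear independence of $r_0,\dots,r_{c'}$ kills the slope of any lc place at $t=r_{c'}$, so the lc condition is open in $t$ near $r_{c'}$ for fixed $X$. That part is sound, and the identification of the quantity to be bounded below, namely $\min_{Q_E\neq 0} a(E;X,\Delta(r_{c'}))/|Q_E|$, is the right one.

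The uniformity step, however, is a genuine gap, and you are candid about it. The plan --- contradiction, replace $t_n$ by the threshold $t_n^*$, extract a divisorial lc place $E_n$, adjoin to it, land in a DCC picture and invoke Theorem~\ref{thm: ACC of LCT} --- is plausible in outline, but neither of the two obstacles you list is actually resolved. In particular, the unboundedness of the slopes $Q_{E_n}$ is fatal to the naive version: from $t_n^*-r_{c'} = a(E_n;X_n,\Delta_n(r_{c'}))/Q_{E_n}\to 0$ one cannot conclude $a(E_n;X_n,\Delta_n(r_{c'}))\to 0$, and even if one could, ``discrepancies tending to $0$'' is not by itself an ACC violation (log discrepancies of lc pairs with coefficients in a fixed finite set can accumulate at $0$). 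Likewise, when some $\beta_i<0$ the quantity $t_n^*-r_{c'}$ is not the lc threshold of an effective divisor, so it does not literally sit in the set covered by Theorem~\ref{thm: ACC of LCT}. Turning the failure into a bona fide ACC contradiction requires the extra input of Nakamura's proof --- notably an induction on dimension via adjunction to a non-klt center combined with the global ACC of \cite{HMX14} (Theorem~\ref{thm: ACC of num trivial}), rather than just the lct-ACC --- and that is precisely the step your proposal flags but does not supply. As written, the argument establishes only the fixed-$X$ case.
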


A corollary of the above theorem and its proof is the following.

\begin{theorem}\label{thm: Nak Corollary}
Fix $d\in \Nn$, $c \in \Zz_{\geq 0}$ and $\ep>0$. Suppose that $a_1, \ldots, a_c \in \Rr_{>0}$ are positive real numbers. Let $\mathfrak S$ be the set of log pairs $(X, \sum_{1\leq i\leq c} a_iD_i)$ satisfying
\begin{enumerate}
\item $(X, \sum_{1\leq i\leq c} a_iD_i)$ is lc with $\dim X =d$, and
\item $X$ is $\Qq$-factorial, $D_i$ is a Weil divisor for each $1 \leq i \leq c$.
\end{enumerate}
Then there exist $s \in \Nn$ and $(b^{j}_1, \ldots, b^{j}_c) \in \Qq^c, 1 \leq j \leq s$ such that for any $(X, \sum_{1\leq i\leq c} a_iD_i) \in \mathfrak S$, $$(X, \sum_{1\leq i\leq c} b^j_iD_i)$$ is lc and 
\[
\sum_{1\leq i\leq c} a_iD_i \in \Conv(\sum_{1\leq i\leq c} b^1_iD_i, \dots, \sum_{1\leq i\leq c} b^s_iD_i).
\]

Moreover, $\{(b^{j}_1, \ldots, b^{j}_c) \mid 1 \leq j\leq s\}$ can be chosen such that $|a_i - b_i^j|<\ep$, and the sets of lc places (see \eqref{eq: LCP}) satisfy
\[
{\rm LCT}(X, \sum_{1\leq i\leq c} b^j_iD_i ) \subset {\rm LCT}(X, \sum_{1\leq i\leq c} a_iD_i ).
\] 
\end{theorem}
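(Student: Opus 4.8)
The plan is to recast the statement as one about a convex region in a finite‑dimensional space of coefficients, and to promote the one‑parameter perturbations provided by Theorem~\ref{thm: Nak} to a full neighbourhood by convexity. We may assume $c\ge 1$ and that $a_1,\dots,a_c$ are not all rational (otherwise take $s=1$, $b^1=(a_1,\dots,a_c)$). First I would fix a $\Qq$-basis $r_0=1,r_1,\dots,r_{c'}$ of the $\Qq$-subspace of $\Rr$ spanned by $1,a_1,\dots,a_c$; after replacing each $r_k$ ($k\ge1$) by $r_k+N_k$ for suitable $N_k\in\Nn$ we may assume $r_k>0$, and $c'\ge1$ since the $a_i$ are not all rational. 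Writing $a_i=s_i(r_0,\dots,r_{c'})$ for $\Qq$-linear functions $s_i\colon\Rr^{c'+1}\to\Rr$, I set
\[
g\colon\Rr^{c'}\to\Rr^{c},\qquad g(t)=\bigl(s_i(1,t)\bigr)_{1\le i\le c},
\]
an affine map defined over $\Qq$ with $g(r)=(a_1,\dots,a_c)$ where $r:=(r_1,\dots,r_{c'})$. Let $U\subset\Rr^{c'}$ be the set of $t$ for which $(X,\sum_i s_i(1,t)D_i)$ is lc for \emph{every} $(X,\sum_i a_iD_i)\in\mathfrak S$; this is a convex set containing $r$, because log discrepancies are affine in the boundary coefficients and $g$ is affine.

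The crux is to show $r\in\mathrm{int}\,U$. For each $k\in\{1,\dots,c'\}$, relabelling the basis so that $r_k$ occupies the last slot, I would apply Theorem~\ref{thm: Nak}: its hypotheses hold because $r_0,\dots,r_{c'}$ are $\Qq$-linearly independent positive reals, the $s_i$ are $\Qq$-linear with $s_i(r_0,\dots,r_{c'})=a_i>0$, and every variety occurring in $\mathfrak S$ is $\Qq$-factorial, hence $\Qq$-Gorenstein with the $D_i$ being $\Qq$-Cartier effective Weil divisors. This produces $\ep_k>0$ with $r\pm\ep_ke_k\in U$. Setting $\ep':=\min_k\ep_k>0$, convexity of $U$ forces $U\supseteq\Conv\{r\pm\ep'e_k:1\le k\le c'\}$, a full‑dimensional cross‑polytope with $r$ in its interior, so $r\in\mathrm{int}\,U$.

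Next I would choose $\delta>0$ with $\overline{B(r,\delta)}\subseteq\mathrm{int}\,U$, small enough that $\|g(p)-(a_1,\dots,a_c)\|_\infty<\ep$ and $g(p)_i>0$ for all $p\in B(r,\delta)$ and all $i$; pick rationals $\alpha_k^-\le r_k\le\alpha_k^+$ close enough to $r_k$ that the box $R:=\prod_k[\alpha_k^-,\alpha_k^+]$ lies inside $B(r,\delta)$; and take $p_1,\dots,p_s$ (with $s=2^{c'}$) to be the vertices of $R$ and $b^j:=g(p_j)\in\Qq^c$. Then each $p_j\in U$ makes $(X,\sum_i b^j_iD_i)$ lc for every $(X,\cdot)\in\mathfrak S$; writing $r=\sum_j\mu_jp_j$ with $\mu_j\ge0$ and $\sum_j\mu_j=1$ and applying the affine map $g$ gives $(a_1,\dots,a_c)=\sum_j\mu_jb^j$, i.e.\ $\sum_i a_iD_i\in\Conv(\sum_i b^1_iD_i,\dots,\sum_i b^s_iD_i)$; and $|a_i-b^j_i|<\ep$ by the choice of $\delta$. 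For the lc‑place inclusion, fix $(X,\cdot)\in\mathfrak S$ and a divisor $E$ over $X$, realized on $\pi\colon Y\to X$. Since $K_X$ and the $D_i$ are $\Qq$-Cartier, $a(E;X,\sum_i t_iD_i)=1+\mult_E(K_Y-\pi^*K_X)-\sum_i(\mult_E\pi^*D_i)\,t_i$ is a \emph{rational} affine function of $t$, so ``$E$ is an lc place of $(X,\sum_i t_iD_i)$'' is the condition that $t$ lie on a rational hyperplane $H_E\subset\Rr^c$, and $U\subseteq g^{-1}(H_E^-)$ for the associated halfspace $H_E^-$. Hence $g^{-1}(H_E)$ is empty, or all of $\Rr^{c'}$, or a genuine hyperplane: in the middle case $g(r)=(a_1,\dots,a_c)$ and $b^j=g(p_j)$ both lie on $H_E$; in the last case $\overline{B(r,\delta)}\subseteq\mathrm{int}\,U\subseteq\mathrm{int}\,g^{-1}(H_E^-)$, so $g^{-1}(H_E)=\partial g^{-1}(H_E^-)$ is disjoint from $\overline{B(r,\delta)}$, hence from $r$ and the $p_j$, so $g(r)$ and the $g(p_j)$ avoid $H_E$. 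In every case $E\in{\rm LCP}(X,\sum_i b^j_iD_i)$ implies $E\in{\rm LCP}(X,\sum_i a_iD_i)$, as desired.

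The main obstacle is the interiority $r\in\mathrm{int}\,U$: Theorem~\ref{thm: Nak} only perturbs one coordinate at a time, and the convexity of the log canonical condition is precisely what converts $c'$ independent one‑parameter families into a genuine neighbourhood of $r$. The second point requiring care is that the $\Qq$-factoriality hypothesis is used essentially in the final step — it makes the lc‑place hyperplanes $H_E$ rational, so a hyperplane through $g(r)=(a_1,\dots,a_c)$ is forced to contain the whole rational‑affine image of $g$, which is what prevents new lc places from appearing along the (rational) perturbation.
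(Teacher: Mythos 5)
Your proof is correct, and it arrives at the same conclusion by a genuinely different route from the paper's. The paper reduces to the one‑parameter statement of Theorem~\ref{thm: Nak} by substituting rational approximations $q_{c'}, p_{c'}$ for the last basis element $r_{c'}$, observing that this strictly drops $\dim_{\Qq}\mathrm{Span}_{\Qq}\{1,a_1,\dots,a_c\}$, and then inducting on that dimension; for the lc‑place inclusion the paper works directly with the thresholds $h^{\pm}$ from Nakamura's proof and a convexity computation of log discrepancies along the segment $[r_{c'},h^+]$. You instead pull everything back to the parameter space $\Rr^{c'}$, apply Theorem~\ref{thm: Nak} coordinate‑by‑coordinate to produce a cross‑polytope $\Conv\{r\pm\ep'e_k\}\subseteq U$ and conclude $r\in\mathrm{int}\,U$ from convexity of the lc locus (here the uniformity of Nakamura's $\ep$ over all $\Qq$‑Gorenstein pairs is the crucial point, and you invoke it correctly), then cut out a rational box inside a ball $\overline{B(r,\delta)}\subseteq\mathrm{int}\,U$ and take $b^j=g(p_j)$. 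This avoids the paper's induction altogether and gives $s=2^{c'}$ explicitly. For the lc‑place inclusion your hyperplane argument is clean and arguably more transparent: since $\Qq$‑factoriality makes each condition $a(E;X,\sum t_iD_i)=0$ a \emph{rational} affine hyperplane $H_E$, the composite $\varphi_E=a(E;X,\cdot)\circ g$ is rational affine on $\Rr^{c'}$, and the trichotomy $g^{-1}(H_E)\in\{\emptyset,\Rr^{c'},\text{hyperplane}\}$ together with $\overline{B(r,\delta)}\subseteq\mathrm{int}\,U\subseteq\{\varphi_E>0\}\cup g^{-1}(H_E)$ forces ${\rm LCP}(X,\sum b^j_iD_i)\subseteq{\rm LCP}(X,\sum a_iD_i)$; one could even shortcut the ``genuine hyperplane'' case by noting $\varphi_E(r)=0$ would be a nontrivial rational linear relation among $1,r_1,\dots,r_{c'}$, contradicting $\Qq$‑linear independence. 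Two minor cosmetic points: you should take $\alpha_k^-<r_k<\alpha_k^+$ strictly (automatic since each $r_k$ with $k\ge1$ is irrational), and the replacement $r_k\mapsto r_k+N_k$ to ensure positivity should be accompanied by the corresponding change of the $s_i$, which you implicitly do. What your approach buys is a one‑shot construction of the rational polytope and an lc‑place argument that relies only on rationality of $K_X,D_i$ rather than on unwinding Nakamura's proof; what the paper's approach buys is a shorter write‑up that piggybacks directly on the quantities $h^{\pm}$ already present in \cite{Nak16}.
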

\begin{proof}
If $a_i \in \Qq$ for all $i$, then both claims automatically hold true. In the following, we assume that $\dim{\rm Span}_{\Qq}\{1, a_1, \ldots, a_c\}>1$.

Choose $\{r_0=1, r_1, \ldots,r_{c'}\}$ as a basis for ${\rm Span}_{\Qq}\{1, a_1, \ldots, a_c\}$, then there are $\Qq$-linear functions $s_i: \Rr^{c'+1} \to \Rr$ such that $a_i = s_i(r_0, \ldots, r_{c'})$. By Theorem \ref{thm: Nak}, there are $q_{c'}, p_{c'} \in \Qq$ such that $r_{c'} \in (q_{c'}, p_{c'})$ with $|r_{c'}-q_{c'}|<\ep, |r_{c'}-p_{c'}|<\ep$  and 
\[
(X, \sum_{1\leq i\leq c} s_i(r_0, \ldots, r_{c'-1},q_{c'})D_i)~, ~(X, \sum_{1\leq i\leq c} s_i(r_0, \ldots, r_{c'-1},p_{c'})D_i) 
\] are both lc. Notice that ${\rm Span}_{\Qq}\{\{1\}\cup\{s_i(r_0, \ldots, r_{c'-1},q_{c'}) \mid 1 \leq i \leq c\} \cup \{s_i(r_0, \ldots, r_{c'-1},p_{c'}) \mid 1 \leq i \leq c\} \}$ has dimension smaller than the dimension of ${\rm Span}_{\Qq}\{1, a_1, \ldots, a_c\}$. Then, an induction on dimensions proves the first claim.

The second part of the claim follows from the proof of \cite[Theorem 1.6]{Nak16}. As \cite[P. 182]{Nak16}, we consider
\[
\begin{split}
h^+ &\coloneqq \sup\{t \geq r_{c'} \mid (X, \sum_{1 \leq i \leq c} s_i(r_0, \ldots, r_{c'-1}, t) D_i) \text{~is lc}\},\\
h^- &\coloneqq \sup\{t \leq r_{c'} \mid (X, \sum_{1 \leq i \leq c} s_i(r_0, \ldots, r_{c'-1}, t) D_i) \text{~is lc}\}.
\end{split}
\] \cite[Theorem 1.6]{Nak16} shows that there exists universal $\delta>0$ such that $h^+ -r_{c'}>\delta$ and $r_{c'}-h^->\delta$. We claim that for any $h \in (r_{c'}, h^+)$ and $h' \in (h^-, r_{c'})$,
\[
\begin{split}
{\rm LCP}(X, \sum_{1 \leq i \leq c} s_i(r_0, \ldots, r_{c'-1}, h) D_i)) \subset {\rm LCP}(X, \sum_{1 \leq i \leq c} s_i(r_0, \ldots, r_{c'-1}, r_{c'}) D_i)),\\
{\rm LCP}(X, \sum_{1 \leq i \leq c} s_i(r_0, \ldots, r_{c'-1}, h') D_i)) \subset {\rm LCP}(X, \sum_{1 \leq i \leq c} s_i(r_0, \ldots, r_{c'-1}, r_{c'}) D_i)).
\end{split}
\] We only prove the first inclusion as the second one can be shown by the same argument. Suppose that $P \in {\rm LCP}(X, \sum_{1 \leq i \leq c} s_i(r_0, \ldots, r_{c'-1}, h) D_i))$. Let $\gamma \in (0,1)$ such that $h = \gamma h^++(1-\gamma) r_{c'}$. Then 
\[
\begin{split}
1&=a\big(P; X, \sum_{1 \leq i \leq c} s_i(r_0, \ldots, r_{c'-1}, h)\big)\\
&= \gamma a\big(P; X, \sum_{1 \leq i \leq c} s_i(r_0, \ldots, r_{c'-1}, h^+)\big)\\
 &\quad\quad +(1-\gamma) a\big(P; X, \sum_{1 \leq i \leq c} s_i(r_0, \ldots, r_{c'-1}, r_{c'})\big)\\
&\leq \gamma+(1-\gamma).\\
\end{split}
\] Thus $P \in {\rm LCP}(X, \sum_{1 \leq i \leq c} s_i(r_0, \ldots, r_{c'-1}, r_{c'}) D_i))$. 

We can further choose such $h, h'$ in $\Qq$ with $|r_{c'}-h|<\ep, |r_{c'}-h'|<\ep$. By induction on dimensions as in the first part, we complete the proof.
\end{proof}

We need uniform approximations for lc pairs with coefficients in a DCC set by lc pairs with coefficients in a finite set.

\begin{theorem}[{\cite[Lemma 3.2]{FM18}, \cite[Theorem 5.21]{HLS19}}]\label{thm: DCC to finite}
Let $d \in \Nn$ and $\tau \in \Rr_{>0}$ be fixed numbers. Assume that $I,I' \subset (0,1]$ are DCC sets.  Then there exists a finite set $I_0 \subset (0,1]$ depending only on $d, \tau, I$ and $I'$ satisfying the following property: 

If $(X, \De+D)$ is a log pair such that
\begin{enumerate}
\item $(X, \De+D)$ is lc with $\dim X=d$ and $X$ is a $\Qq$-factorial variety,
\item $\De, D$ do not have common components, and
\item $\De \in I', D \in I$.
\end{enumerate}

There exists $\bar D \in I_0$ such that $(X, \De+\bar D)$ is lc, $\Supp \bar D= \Supp D$, and coefficients of $\bar D - D$ belong to $[0, \tau]$.
\end{theorem}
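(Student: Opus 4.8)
The plan is to argue by contradiction: first reduce to the case where only finitely many coefficients occur, then apply Theorem \ref{thm: Nak Corollary} to obtain a uniform rational polytope of roundings, and finally use ACC for log canonical thresholds (Theorem \ref{thm: ACC of LCT}) to decide component by component whether there is room to round up into a fixed net, leaving a ``critical'' case that is handled by lower-dimensional adjunction and an induction on $d$.

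If no uniform $I_0$ existed, then since $(\Qq\cap(0,1])\cup I\cup I'$ is a countable DCC set I would fix an increasing exhaustion $F_1\subseteq F_2\subseteq\cdots$ of it by finite sets and produce, for each $n$, a log pair $(X_n,\De_n+D_n)$ satisfying (1)--(3) admitting no admissible $\bar D_n$ with coefficients in $F_n$. If infinitely many distinct coefficients of the $D_n$ occurred I would use that a strictly decreasing sequence in a DCC set is impossible to pass to a subsequence in which more and more coefficients cluster, from below, near one accumulation point of $I$; those clustered coefficients can be rounded up to nearby elements of a fixed finite set adapted to the finitely many relevant accumulation points of $I$, so I may assume the coefficients of $D_n$ lie in a fixed finite set $\{a_1,\dots,a_c\}\subseteq I$ (and those of $\De_n$ in a fixed finite subset of $I'$).

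Grouping the components of $D_n$ by coefficient, the lc condition becomes a condition on a point $\vec a=(a_1,\dots,a_c)$ in a polytope of $\Rr^c$, and Theorem \ref{thm: Nak Corollary} (with $\ep$ a small fixed multiple of $\tau$) provides finitely many rational tuples of bounded denominator whose convex hull $\mathcal P$ contains $\vec a$, on which the pair stays lc with no new lc place. If $\vec a$ is not a coordinatewise-maximal point of the lc region, then for some component $P$ with coefficient $d$ the quantity $t:=\lct(P;X_n,\De_n+D_n-dP)-d$ is positive, and $d+t=\lct(P;X_n,\De_n+D_n-dP)\le 1$ lies in an ACC set by Theorem \ref{thm: ACC of LCT}; I then round $d$ up to the least element $\ge d$ of a fixed $\tau$-net if $t\ge\tau$ (or to $1$ if $d>1-\tau$), and to the critical value $\bar d:=d+t$ if $t<\tau$, which keeps the pair lc because log canonicity is a closed condition. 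For the critical values $\bar d$ — and for $\vec a$ itself when it is coordinatewise maximal — there is a divisor $E$ over $X_n$ with $a(E;X_n,\De_n+\bar D_n)=0$; adjunction to the center of $E$ yields, in dimension $<d$, an lc pair with DCC boundary (the discriminant being DCC by Theorem \ref{thm: ACC of LCT}) into which $\bar d$ enters through the different, and the inductive hypothesis in lower dimension together with Theorem \ref{thm: ACC of num trivial} for extremal configurations confines the possible $\bar d$ (equivalently, pins $\vec a$ to finitely many values, using that the subspace $\{\sum_\ell w_\ell x_\ell=c\}$ with positive integer weights $w_\ell$ has no coordinatewise nondecreasing direction, combined with a monotone-subsequence argument against the DCC property). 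Taking $I_0$ to be the union of the $\tau$-net, $\{1\}$, the accumulation-point values, the rational tuples from Theorem \ref{thm: Nak Corollary}, and these critical values contradicts the choice of $(X_n,\De_n+D_n)$ for $n\gg 0$.

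The two delicate points, which I expect to be the main obstacles, are the uniform rounding of coefficients clustering near an accumulation point of $I$ and the uniform finiteness of the critical configurations; the latter is where ACC for log canonical thresholds, ACC for numerically trivial pairs, and the lower-dimensional adjunction are genuinely used, whereas the dichotomy, the $\tau$-net, and the final bookkeeping are routine.
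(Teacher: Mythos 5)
This statement is not proved in the paper: it is quoted from \cite[Lemma 3.2]{FM18} and \cite[Theorem 5.21]{HLS19}, and the paper only remarks that those arguments go through with a nonzero $\Delta$, that the rationality assumption on $I$ in \cite{FM18} is unnecessary, and that $I_0$ need not be rational even when $I,I'$ are. So there is no ``paper's own proof'' to compare against, and your sketch has to be judged on its merits.

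Your ingredients (ACC for lct, ACC for numerically trivial pairs, adjunction to lc places, Nakamura's rational polytopes, induction on dimension, and the DCC/ACC tension) are the right ones, and the dichotomy ``enough room to reach a $\tau$-net point'' versus ``lct is within $\tau$ of the coefficient'' is the right organizing principle. But there are genuine gaps in the reduction and in the critical case, which are precisely the two points you flag as delicate. First, the reduction to a fixed finite set $\{a_1,\dots,a_c\}\subseteq I$ of coefficient values is not justified: the set of accumulation points of a DCC set $I$ is again an infinite DCC set in general (e.g.\ $\{1\}\cup\{1-1/n\}$), so ``finitely many relevant accumulation points'' does not come for free, and rounding a coefficient $d$ up to a nearby accumulation point $\ell$ also requires the observation (which you should make explicit) that ACC for lct forbids lc thresholds from approaching $\ell$ strictly from below, so that $\lct\ge\ell$ eventually and log canonicity is preserved. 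Second, and more seriously, the finiteness of the set of critical values is asserted but not established. You take $\bar d$ equal to the lct when $t<\tau$; these lct's land in an ACC set, but an ACC set is not finite, and ``$\bar d$ within $\tau$ of a DCC coefficient'' alone does not cut it down (e.g.\ $I=\{1/2-1/n\}$, $T=\{1/2+1/m\}$, $\tau$ small, gives infinitely many admissible $t$ within $\tau$ of some $d\in I$). The real point — hinted at by the fact that one should round to a value \emph{strictly below} the lct, at an accumulation point of $I$ sandwiched between the increasing coefficients and the decreasing lct's, rather than to the lct itself — is not what your sketch does; you put ``these critical values'' into $I_0$, which as described is an infinite ACC set. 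The invocation of adjunction and Theorem \ref{thm: ACC of num trivial} to ``confine'' $\bar d$ is too vague to close this gap: the lc place $E$ that appears at the lct need not sit in a numerically trivial configuration, and the weights $w_\ell=\mult_E(\pi^*D_\ell)$ in your hyperplane $\sum w_\ell x_\ell=c$ are in general only nonnegative rationals (not positive integers), so the ``no coordinatewise nondecreasing direction'' argument does not apply as stated. Finally, a smaller issue: you exhaust $(\Qq\cap(0,1])\cup I\cup I'$ by finite sets, but the $I_0$ you build is explicitly allowed to contain accumulation points of $I$ and critical lct values, neither of which need lie in that countable set, so the contradiction ``$I_0\subseteq F_n$ for $n\gg 0$'' does not follow from the setup. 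The use of Theorem \ref{thm: Nak Corollary} is also misdirected here: its polytope is a neighborhood of $\vec a$ in a rational affine slice, not in the coordinatewise-increasing cone, so it does not by itself authorize rounding up.
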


\begin{remark}
This result is proved in \cite[Lemma 3.2]{FM18} and \cite[Theorem 5.21]{HLS19} when $\De =0$. Both arguments work without any change in the above setting. Notice that the additional assumption that ``$I\subset \Qq$ and the accumulation points of $I$ are rationals'' in \cite[Lemma 3.2]{FM18} is not needed in the argument of the above result. One can find a stronger statement in \cite[Theorem 5.21]{HLS19}. Notice that even if $I, I' \subset \Qq$, $I_0$ may not be contained in $\Qq$.
\end{remark}

The argument for the following result can be found in \cite[Proof of Theorem 3.1]{FG12}.

\begin{theorem}[{Relative abundance for numerically trivial klt pairs}]\label{thm: relatively abundance for numerically trivial klt pair}
Let $X \to Z$ be a projective contraction. Assume that $(X, B)$ is klt with $K_X+B$ a $\Qq$-Cartier divisor. If $K_X+B \equiv 0/Z$, then $K_X+B \sim_{Z, \Qq} 0$.
\end{theorem}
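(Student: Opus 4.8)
The plan is to reduce this global statement over $Z$ to the absolute abundance theorem for numerically trivial klt pairs applied to the generic fibre of $X\to Z$, and then to descend the resulting relative $\Qq$-linear triviality from the generic fibre to all of $Z$ by a vertical-divisor argument. Write $g\colon X\to Z$ for the given contraction.

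First I would restrict to the generic fibre $X_\eta$ of $g$. Then $(X_\eta, B_\eta)$ is klt with $K_{X_\eta}+B_\eta=(K_X+B)|_{X_\eta}\equiv 0$, and by the abundance theorem for numerically trivial klt pairs --- which packages Nakayama's abundance in numerical dimension zero together with the log MMP --- one gets $K_{X_\eta}+B_\eta\sim_{\Qq}0$; see \cite[Proof of Theorem 3.1]{FG12} and the references therein. (Concretely, one may instead argue on a general closed fibre $F$: there $(F,B|_F)$ is klt with $K_F+B|_F\equiv 0$, so any effective $\Qq$-divisor $D\sim_{\Qq}K_F+B|_F$ provided by nonvanishing satisfies $D\cdot H^{\dim F-1}=0$ for $H$ ample, hence $D=0$.) Fixing $m\in\Nn$ with $m(K_X+B)$ Cartier and $m(K_X+B)|_{X_\eta}\sim 0$, we conclude that $m(K_X+B)$ is linearly trivial over the generic point of $Z$; equivalently, the relative Iitaka dimension of $K_X+B$ over $Z$ is zero.

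Next I would choose $\phi\in K(X)^{\times}$ so that $V\coloneqq m(K_X+B)+\operatorname{div}(\phi)$ is vertical over $Z$; this is possible precisely because $m(K_X+B)|_{X_\eta}\sim 0$. The divisor $V$ is Cartier, and since $m(K_X+B)\equiv 0/Z$ and $\operatorname{div}(\phi)\equiv 0$ we have $V\equiv 0/Z$. The crucial step is then the standard lemma that a vertical $\Rr$-Cartier divisor $V$ on $X$ with $V\equiv 0/Z$ is a pullback from $Z$: cutting $X$ and $Z$ by general hyperplane sections reduces this to $\dim Z=1$, $\dim X=2$, where it is Zariski's lemma (the intersection form on the components of a fibre is negative semidefinite with kernel spanned by the fibre class, so a class orthogonal to every fibre component is proportional to the fibre). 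Applying this to $V$ produces a divisor $N$ on $Z$ with $m(K_X+B)\sim V=g^{*}N$, and hence $K_X+B\sim_{Z,\Qq}0$.

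I expect the main obstacle to be the very first reduction: the input ``$(X_\eta,B_\eta)$ klt with $K_{X_\eta}+B_\eta\equiv 0$ implies $K_{X_\eta}+B_\eta\sim_{\Qq}0$'' is a genuinely deep theorem rather than a formal manipulation (it is exactly the $\nu=0$ case of abundance), whereas everything after it is essentially bookkeeping. A secondary technical point is that the divisor $N$ obtained at the end is a priori only an $\Rr$-Weil divisor on $Z$; to phrase the conclusion as $K_X+B\sim_{Z,\Qq}0$ one either observes that $m(K_X+B)\sim_Z 0$ already (which $V=g^{*}N$ encodes, $\sim_Z$ being linear triviality over $Z$), or first replaces $Z$ by a small $\Qq$-factorial modification compatibly with $X$ so that $N$ becomes $\Qq$-Cartier.
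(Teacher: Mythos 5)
Your overall strategy---reduce to $\nu=0$ abundance for klt pairs on the generic fibre, then globalise---is the right one, and it matches the approach of the reference the paper cites (\cite[Proof of Theorem 3.1]{FG12}); you are also right that the generic-fibre step is where the genuine depth sits. The problem is in the globalisation.

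The ``standard lemma'' you invoke in the middle of the argument, that a vertical $\Rr$-Cartier divisor $V$ on $X$ with $V\equiv 0/Z$ is a pullback from $Z$, is not standard in the generality you need, and the paper itself records exactly why: its Lemma~\ref{le: KMM94} (KMM94) gives this conclusion only under the extra hypothesis that $Z$ is $\Qq$-factorial, while its Lemma~\ref{le: rational trivial is pullback} gives it under the extra hypothesis $V\sim_{Z,\Rr}0$, which is precisely what we are trying to prove and so cannot be fed in. Without one of these two hypotheses the statement is not a formal consequence of Zariski's lemma. Your hyperplane-section reduction is incomplete on two specific points. First, cutting $Z$ down to a general curve annihilates all components of $V$ whose image in $Z$ has codimension $\ge 2$, so Zariski's lemma only controls the part of $V$ dominating divisors of $Z$. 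Second, and more seriously, even after showing that over the smooth locus $U\subset Z$ one has $V|_{g^{-1}(U)}=g^*P'$ for some $\Rr$-divisor $P'$ on $U$ (which is where the cut-down argument is morally headed), one still needs the closure $\overline{P'}$ to be $\Rr$-Cartier on $Z$ in order to even write $g^*\overline{P'}$ and then kill the remaining very exceptional part by negativity. In the paper's Lemma~\ref{le: rational trivial is pullback} this $\Rr$-Cartierness is extracted from the hypothesis $E\sim_{Z,\Rr}0$ (one transports a rational function from $U$ to $Z$ using codimension $\ge 2$); numerical triviality over $Z$ alone does not give it. Your proposed patch of passing to a small $\Qq$-factorialisation of $Z$ is also delicate, because such a modification need not lift to a birational modification of $X$ compatible with $g$.

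The way the literature actually closes the gap is different: once one knows $(K_X+B)|_{X_\eta}\sim_{\Qq}0$, one applies the Kawamata--Fukuda-type relative base-point-free theorem for a divisor $D$ that is $f$-nef with $D-(K_X+B)$ $f$-nef and $f$-abundant, taking $D=K_X+B$ (so that $D-(K_X+B)=0$ is trivially nef and abundant, while the generic-fibre abundance supplies the required non-vanishing $\kappa(X_\eta,(a-1)(K_X+B)|_{X_\eta})\ge 0$). This gives $f$-semi-ampleness of $K_X+B$ directly, and since $K_X+B\equiv 0/Z$ and $f$ is a contraction, $f$-semi-ampleness forces $K_X+B\sim_{Z,\Qq}0$. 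This route bypasses the ``vertical divisor is a pullback'' step entirely and avoids the $\Qq$-factoriality issue.
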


\begin{lemma}[{\cite[Lemma 1.7]{KMM94}}]\label{le: KMM94}
Let $f: X \to Z$ be a contraction between quasi-projective normal varieties. Let $E$ be a $\Qq$-Cartier numerically $f$ trivial divisor on $X$, where no component of $E$ dominant $Z$. Suppose that $Z$ is $\Qq$-factorial. Then $E$ is the pullback of a unique $\Qq$-divisor on $Z$.
\end{lemma}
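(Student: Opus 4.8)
The plan is to prove uniqueness first (which is formal) and then existence by induction on $n=\dim Z$, cutting $Z$ down to a curve by general hyperplane sections and invoking a Zariski-type lemma in the curve case.

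Uniqueness is immediate: if $f^*D=0$ for a $\Qq$-divisor $D$ on $Z$, then for every prime divisor $P\subseteq Z$ the preimage $f^{-1}(P)$ is a non-empty divisor on $X$ (as $f$ is surjective) having some component $G$ that dominates $P$, and $f^*P$ has a positive coefficient along $G$; hence the coefficient of $D$ along $P$ vanishes, so $D=0$. Thus the divisor $D$ to be constructed is unique. Before proving existence, by replacing $X$ with a resolution we may also assume $X$ is smooth: pulling back $E$ preserves all the hypotheses (it remains $\Qq$-Cartier, vertical, and numerically $f$-trivial, since any curve contracted by the composite is mapped to a point or to a curve contracted by $f$), and pushing forward recovers the conclusion on the original $X$.

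For the base cases: if $n=0$ then $E$ is simultaneously vertical and a divisor, hence $E=0=f^*0$. If $n=1$, then $E$ is supported on fibres of $f$. Each fibre $F_c\coloneqq f^{-1}(c)$ is connected because $f$ is a contraction; writing $F_c=\sum_i m_i F_{c,i}$ and $E|_{F_c}=\sum_i b_{c,i}F_{c,i}$, Zariski's lemma --- the standard sign argument using connectedness of $F_c$, the fact that the components $F_{c,i}$ are $\Qq$-Cartier on the now-smooth $X$, and the fact that $F_c\equiv 0/Z$ while $E$ is numerically $f$-trivial --- forces $b_{c,i}/m_i$ to be a constant $d_c$ independent of $i$. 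Hence $E=\sum_c d_c F_c=f^*\bigl(\sum_c d_c [c]\bigr)$, a finite sum. For the inductive step $n\ge 2$, I would fix a very ample divisor $A$ on $Z$ and a general $H\in|A|$. By Bertini-type arguments $H$ is normal and $\Qq$-factorial, $X_H\coloneqq f^{-1}(H)$ is normal, $f_H\coloneqq f|_{X_H}\colon X_H\to H$ is a projective contraction (using that fibres of $f$ are connected), and $E_H\coloneqq E|_{X_H}$ is $\Qq$-Cartier, vertical over $H$ (the image of each of its components lies in $f(\Supp E)\cap H\subsetneq H$), and numerically $f_H$-trivial (intersection numbers with curves in $X_H$ are computed on $X$, and such curves are contracted by $f$). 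By the inductive hypothesis $E_H=f_H^*D_H$ for a unique $\Qq$-divisor $D_H$ on $H$. For two general members $H,H'$, applying the uniqueness already proved to $X_{H\cap H'}\to H\cap H'$ gives $D_H|_{H\cap H'}=D_{H'}|_{H\cap H'}$, so the $D_H$ patch to a $\Qq$-divisor $D$ on $Z=\bigcup_H H$ (the union is all of $Z$ since $|A|$ is base-point-free), and $D$ is $\Qq$-Cartier since $Z$ is $\Qq$-factorial. Finally $f^*D$ and $E$ restrict to the same divisor on each $X_H$, and the $X_H=f^{-1}(H)$ cover $X$, whence $f^*D=E$.

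I expect the main difficulty to be the base case $n=1$: making ``Zariski's lemma'' rigorous in this relative, a priori singular situation requires the reduction to a smooth (or $\Qq$-factorial) total space so that individual fibre components can be intersected with curves, together with the connectedness of the fibres of the contraction, after which the usual sign argument applies. A secondary technical point is assembling the Bertini inputs used in the inductive step, in particular the fact that a general hyperplane section of a $\Qq$-factorial quasi-projective variety is again $\Qq$-factorial.
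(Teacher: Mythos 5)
The paper does not prove this lemma: it is imported verbatim from \cite[Lemma 1.7]{KMM94}, so there is no in-paper proof to compare against; the closest thing is the proof of Lemma~\ref{le: rational trivial is pullback}, which treats the variant where $Z$ is not $\Qq$-factorial but $E\sim_{Z,\Rr}0$. Your plan (uniqueness, resolve $X$, induct on $\dim Z$ via ample sections, Zariski's lemma over a curve) is reasonable, and your reduction to smooth $X$ is sound once one notes that any divisorial component of $\pi^{-1}(\Supp E)$ maps into $\Supp E$, hence into $f(\Supp E)\subsetneq Z$, so verticality is preserved.

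The genuine gap is the step you set aside as ``secondary'': that a general ample section $H$ of a $\Qq$-factorial $Z$ is again $\Qq$-factorial. This is not a routine Bertini input. When the singular locus of $Z$ has positive dimension, a general $H$ must meet it, and whether $H$ stays $\Qq$-factorial there is governed by Grothendieck--Lefschetz type theorems for local and global class groups, which require $\dim Z\geq 4$ together with hypotheses on the singularities; the low-dimensional steps of your induction are exactly where such statements can fail (Noether--Lefschetz phenomena already show the restriction map on class groups is not surjective in dimension $3\to 2$). Without this, the inductive hypothesis cannot be applied to $f_H:X_H\to H$. The fix --- which is the first move in the paper's proof of Lemma~\ref{le: rational trivial is pullback} --- is to reduce to $Z$ smooth before slicing: restrict to the smooth locus $Z^{\circ}$ and $X^{\circ}=f^{-1}(Z^{\circ})$, produce $D^{\circ}$ there, let $D$ be its closure on $Z$ (this is $\Qq$-Cartier precisely because $Z$ is $\Qq$-factorial), and observe that $f^{*}D-E$ is numerically $f$-trivial and very exceptional over $Z$, hence vanishes by the negativity lemma for very exceptional divisors \cite[Lemma 3.3]{Bir12b}. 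After this reduction general sections of $Z$ are smooth and the slicing argument goes through; alternatively, one can avoid slicing entirely by localizing at the generic point of each prime divisor of $Z$ and applying Zariski's lemma over the resulting DVR.
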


The following result is similar to Lemma \ref{le: KMM94}. However, $Z$ is not assumed to be $\Qq$-factorial in Lemma \ref{le: rational trivial is pullback}.

\begin{lemma}\label{le: rational trivial is pullback}
Let $f: X \to Z$ be a contraction between quasi-projective normal varieties. Suppose that $E$ is an $\Rr$-Cartier divisor on $X$ such that $\Supp E$ is vertical over $Z$. If $E \sim_{Z, \Rr} 0$, then there exists a divisor $
P$ on $Z$ such that $E =f^*P$.
\end{lemma}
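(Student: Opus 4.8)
The plan is to descend $E$ along $f$ directly from the contraction hypothesis $f_*\Oo_X=\Oo_Z$, rather than via the $\Qq$-factoriality of $Z$ as in Lemma \ref{le: KMM94}. First I would unwind $E\sim_{Z,\Rr}0$: by definition there are an $\Rr$-Cartier $\Rr$-divisor $B$ on $Z$, finitely many $\phi_1,\dots,\phi_m\in k(X)^\times$, and reals $r_1,\dots,r_m$ with $E-f^*B=\sum_{k=1}^m r_k\,\mathrm{div}(\phi_k)$. Since $\Supp f^*B$ is vertical over $Z$, the divisor $E':=E-f^*B$ is again $\Rr$-Cartier with vertical support, and is now $\Rr$-linearly trivial in the absolute sense; it suffices to find $P'$ with $E'=f^*P'$, since then $P:=B+P'$ works.

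Next I would pass to the generic fibre $X_\eta=X\times_Z\spec k(Z)$, where $\eta$ is the generic point of $Z$. Verticality of $E'$ gives $E'|_{X_\eta}=0$, i.e.\ $\sum_{k=1}^m r_k\,\mathrm{div}_{X_\eta}(\phi_k)=0$. The divisors $\mathrm{div}_{X_\eta}(\phi_k)$ are supported on finitely many prime divisors of $X_\eta$, so the set of $(c_k)\in\Rr^m$ with $\sum_k c_k\,\mathrm{div}_{X_\eta}(\phi_k)=0$ is the kernel of an integer matrix, hence a $\Qq$-rational subspace; as $(r_k)$ lies in it, I can write $(r_k)=\sum_{j=1}^t\rho_j(q_{jk})_k$ with $\rho_j\in\Rr$ and $(q_{jk})_k\in\Qq^m$ lying in the same kernel. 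Choosing $N_j\in\Nn$ clearing the denominators of the $q_{jk}$ and setting $\psi_j:=\prod_k\phi_k^{N_j q_{jk}}\in k(X)^\times$, one gets $\tfrac1{N_j}\mathrm{div}_X(\psi_j)=\sum_k q_{jk}\,\mathrm{div}_X(\phi_k)$, hence $E'=\sum_j\tfrac{\rho_j}{N_j}\mathrm{div}_X(\psi_j)$, while $\mathrm{div}_{X_\eta}(\psi_j)=0$ by the choice of the $q_{jk}$.

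Finally, since $\mathrm{div}_{X_\eta}(\psi_j)=0$, the function $\psi_j|_{X_\eta}$ is a nowhere-vanishing regular function on $X_\eta$; localizing $f_*\Oo_X=\Oo_Z$ at $\eta$ gives $\Gamma(X_\eta,\Oo_{X_\eta})=\Oo_{Z,\eta}=k(Z)$, so $\psi_j=f^*\theta_j$ for some $\theta_j\in k(Z)^\times$. As $\mathrm{div}_Z(\theta_j)$ is principal, hence Cartier, $\mathrm{div}_X(\psi_j)=f^*\mathrm{div}_Z(\theta_j)$, and therefore $E'=f^*\big(\sum_j\tfrac{\rho_j}{N_j}\mathrm{div}_Z(\theta_j)\big)$; combining with the first reduction, $E=f^*P$ with $P=B+\sum_j\tfrac{\rho_j}{N_j}\mathrm{div}_Z(\theta_j)$, which is moreover $\Rr$-Cartier. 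The only step that uses the hypotheses in an essential way is the identification $\Gamma(X_\eta,\Oo_{X_\eta})=k(Z)$ coming from $f$ being a contraction; the rest is linear algebra over $\Qq$ together with bookkeeping, so I do not expect a real obstacle.
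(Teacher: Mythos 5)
Your proof is correct, and it takes a genuinely different and more direct route than the paper's. The paper first reduces to $Z$ smooth: it restricts to the smooth locus $U\subset Z$, shows via a rational decomposition and the projection formula that a putative $P'$ on $U$ satisfies $P'\sim_{\Rr}L|_U$ so that its closure $\bar P'$ is $\Rr$-Cartier, then applies the negativity lemma for very exceptional divisors to upgrade $E|_{f^{-1}(U)}=g^*P'$ to $E=f^*\bar P'$; over smooth $Z$ it invokes (the argument of) \cite[Lemma 1.7]{KMM94}, again after a rational approximation to reduce from $\Rr$- to $\Qq$-coefficients. Your argument never leaves $Z$: you unwind $E\sim_{Z,\Rr}0$ explicitly, restrict to the generic fibre $X_\eta$, observe that $\{(c_k)\in\Rr^m\mid\sum_k c_k\operatorname{div}_{X_\eta}(\phi_k)=0\}$ is a $\Qq$-rational subspace so that $E-f^*B$ becomes a real combination of principal divisors $\operatorname{div}_X(\psi_j)$ with each $\operatorname{div}_{X_\eta}(\psi_j)=0$, and then descend each $\psi_j$ to $\theta_j\in k(Z)^{\times}$ via $\Gamma(X_\eta,\Oo_{X_\eta})=(f_*\Oo_X)_\eta=k(Z)$ together with normality of $X_\eta$. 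Since $\operatorname{div}_X(\psi_j)=f^*\operatorname{div}_Z(\theta_j)$, this gives $E=f^*P$ on the nose, with $P$ manifestly $\Rr$-Cartier. What this buys: it avoids the reduction to smooth $Z$, the negativity lemma, and the appeal to \cite{KMM94}, while isolating the role of the contraction hypothesis cleanly. The rational-approximation step is common to both arguments, but placing it at the generic fibre lets you produce rational functions that actually descend, which is tidier than the paper's two-stage pushforward-plus-negativity reduction.
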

\begin{proof}
First, we show that it is enough to assume that $Z$ is smooth. Let $U$ be the smooth locus of $Z$, $W =f^{-1}(U)$ and $g =f|_{W}$. If there exists $P'$ on $U$ such that $E|_{W} = g^*P'$, we claim that $P = \bar P'$ (i.e. the closure of each component of $P'$) is $\Rr$-Cartier and $E = f^*P$. 

Suppose that $E  \sim_{\Rr} f^*L$, then $g^* P' \sim_{\Rr} g^*(L|_U)$ and thus $g^* (P'-L|_U) \sim_{\Rr} 0$. Let $\Theta = P'-L|_U$. First, we show that when $g^*\Theta \sim_{\Rr} 0$, then $\Theta \sim_{\Rr} 0$ on $U$. By definition, 
\[
g^*\Theta = \sum_{1 \leq i \leq q} r_i \di(h_i)
\] with $r_ i \in \Rr$ and $h_i \in K(W)$. Write $\Theta =\sum_{1 \leq j \leq q'} \theta_j \Theta_j$ with $\Theta_j$ a prime divisor and $\di (h_i)= \sum_{1 \leq k \leq q_i} a_i^k H_i^k$ with $a_i \in \Zz$, then
\[
\sum_{1 \leq j \leq q'} \theta_j g^*\Theta_j = \sum_{1 \leq i \leq q} (r_i \sum_{1 \leq k \leq q_i} a_i^k H_i^k). 
\] By comparing the coefficients of each prime divisors, we have a system of rational linear equations:
\[
f_s(\{x_j \mid 1 \leq j \leq {q'}\})= g_s(\{y_i, {z^k_i} \mid 1 \leq i \leq q, 1 \leq k \leq q_i\}), 1 \leq s \leq l,
\] where $$ {\bf w} \coloneqq (\{\theta_j\mid 1 \leq j \leq {q'}\}; \{r_i, a_i^k \mid 1 \leq i \leq q, 1 \leq k \leq q_i\})$$ is a solution. Hence, there are rational solutions ${\bf w}_t,  1 \leq t \leq w$ of the above equations such that
\[
{\bf w} = \sum_{1 \leq t \leq w} \gamma_t {\bf w}_t
\] with $\gamma_t >0$ and $\sum_{1 \leq t \leq w} \gamma_t=1$. This implies that there are $\Qq$-divisors $\Theta^t, 1 \leq t \leq w$ such that
\begin{equation}\label{eq: theta}
\Theta = \sum_{1 \leq t \leq w} \gamma_t \Theta^t
\end{equation} with $g^*\Theta^t \sim_{\Qq} 0$. Therefore, we have $m_t \in \Nn$ such that $m_t \Theta^t$ is Cartier and $\Oo_W(g^*(m_t \Theta^t)) \simeq \Oo_W$. By the projection formula, 
\[
\Oo_U(m_t \Theta^t) = g_*\Oo_W(g^*(m_t \Theta^t)) \simeq \Oo_U.
\] That is, $\Theta^t \sim_{\Qq} 0$ on $U$. By \eqref{eq: theta}, we have $\Theta \sim_{\Rr} 0$ on $U$. 

By $\Theta = P'-L|_U$, 
\[
P' \sim_{\Rr} L|_U.
\] As $\codim(X\backslash U, X) \geq 2$, $\bar P' \sim_{\Rr} L$ and thus $P=\bar P'$ is $\Rr$-Cartier. By assumption, $E|_W = g^*P'$. If we write
\[
g^*P-E = B^+- B^-,
\] where $B^+\geq 0, B^- \geq 0$ without common components, then $\codim f(B^+) \geq 2, \codim f(B^-) \geq 2$. Moreover, $B^+- B^- \equiv 0/Z$. By the negativity lemma for very exceptional divisors (for example, see \cite[Lemma 3.3]{Bir12b}), we have $B^+= B^-=0$. This shows the claim.

Now, we can assume that $Z$ is smooth. Then the argument of \cite[Lemma 1.7]{KMM94} applies to $\Rr$-divisors without any change. Or, we can use the same argument in the first part to obtain $E= \sum \gamma_i E_i$, where $E_i$ is vertical over $Z$ with $E_i \sim_{Z,\Qq} 0$,  and $\sum \gamma_i=1, \gamma_i >0$. Applying \cite[Lemma 1.7]{KMM94}  to each $E_i$, we obtain the desired result.
\end{proof}

\section{$\Gamma$-effective adjunction conjecture}\label{sec: Gamma effective adjunction conjecture}

\subsection{Effective adjunction conjecture}\label{subsec: effective adjunction} 

\cite[Conjecture 7.13]{PS09} proposes a series of conjectures concerning properties of the canonical bundle formula. We only focus on the effective adjunction conjecture in this paper.

\begin{conjecture}[{Effective adjunction, \cite[Conjecture 7.13.3]{PS09}}]\label{conj: effective adjunction}
Under the notation in Section \ref{subsec: canonical bundle formula}
, suppose that $(X, B)$ is lc with $K_X+B$ a $\Qq$-Cartier divisor. Assume that $f: (X, B) \to Z$ is an lc-trivial fibration. Then there exists a positive integer $m$ depending only on the dimension of $X$ and the horizontal multiplicities of $B$ (a finite set of rational numbers) such that $m {\mathbf D}_{\mo}$ is a base-point free b-divisor (see Definition \ref{def: bpf}).
\end{conjecture}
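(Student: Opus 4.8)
The plan is to reduce, by a relative minimal model program, to the two building blocks to which the $\Gamma$-variant is also reduced --- $K_X+B\sim_{Z,\Qq}0$ and relative Picard number one (cf.\ Proposition \ref{prop: CY and Picard 1 is enough}) --- and then to analyse the moduli part via its Hodge-theoretic origin. Concretely, I would pass to a $\Qq$-factorial dlt modification of $(X,B)$, run a relative MMP over $Z$ with respect to an auxiliary relatively ample divisor so that $f$ factors as a tower of fibrations of relative Picard number one, and use the transitivity \eqref{eq: b-boundary property} of the canonical bundle formula to reduce to a single such fibration; in the case $K_X+B\sim_{Z,\Qq}0$ one moreover has $K_X+B\sim_{Z,\Qq}f^*N$ for some $\Qq$-Cartier $N$ by Theorem \ref{thm: relatively abundance for numerically trivial klt pair}, so \eqref{eq: def of div and mod} and \eqref{eq: canonical bundle formula} identify the class of the moduli part on $Z$ with $N-K_Z-(\mathbf{D}_{\di})_Z$. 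At each stage one must verify, via Theorem \ref{thm: ACC of LCT}, that the horizontal multiplicities of the boundaries induced on the intermediate bases stay in a finite set depending only on $\dim X$ and the original horizontal multiplicities of $B$; this finiteness is precisely where the effective version is more rigid than the $\Gamma$-variant. Since \eqref{eq: b-boundary property} shows $\mathbf{D}_{\mo}$ is untouched by all these modifications, it is enough to bound $m$ on each building block.

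Next I would make $\mathbf{D}_{\mo}$ explicit. After the base change rendering it b-nef and b-abundant (\cite{Amb05}, \cite{FG14}), write $(\mathbf{D}_{\mo})_{Z'}\sim_{\Qq}h^*H$ for a nef and big $\Qq$-divisor $H$ on a model $W$; the conjecture then reduces to two assertions: (a) $H$ is semiample, and (b) $mH$ is base-point free for an $m$ depending only on $\dim X$ and the horizontal multiplicities of $B$. Assertion (b) I would attack through boundedness: the general fibre $(F,B|_F)$ is a log Calabi--Yau pair of dimension $\leq\dim X-1$ with coefficients in a fixed finite set, so --- granting boundedness of log Calabi--Yau pairs --- it varies in a bounded family, $W$ may be taken to map to the coarse moduli space of such pairs, $H$ becomes the Hodge-theoretic polarization pulled back from a \emph{fixed} base, and an effective base-point-freeness theorem on that base yields the uniform $m$.

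The genuine obstacle is assertion (a): that $\mathbf{D}_{\mo}$ is \emph{b-semiample} at all rather than merely b-nef and b-abundant. This is the b-semiampleness conjecture; it is known only when $f$ is fibred by curves, where $\mathbf{D}_{\mo}$ is explicit --- Kodaira's canonical bundle formula makes $12\,\mathbf{D}_{\mo}$ base-point free after base change for elliptic fibrations, and similarly for rational fibrations --- and the inductive strategies of \cite{Flo14, FL19} reduce it to delicate questions about period maps and extensions of the Hodge bundle. Until b-semiampleness is available the argument above is conditional, which is exactly why the present paper instead weakens semiampleness to effective $\Gamma$-base-point-freeness: one decomposes $\mathbf{D}_{\mo}$ into a convex combination of b-divisors each admitting a bounded base-point-free multiple (Section \ref{sec: decomposition theorems}), a statement provable without these conjectural inputs.
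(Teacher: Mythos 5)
There is a fundamental mismatch here: the statement you were asked to prove is a \emph{conjecture} (Prokhorov--Shokurov's effective adjunction), which the paper does not prove and explicitly describes as wide open except when $f$ is fibered by curves. Your text is not a proof either, and you concede as much in your final paragraph: the argument hinges on assertion (a), b-semiampleness of $\mathbf{D}_{\mo}$, which is precisely the open content of the conjecture, and assertion (b) is made to rest on boundedness of log Calabi--Yau pairs together with an effective base-point-freeness statement on a moduli space, neither of which is available. So what you have written is a conditional strategy sketch, not an argument that establishes the statement; no choice of intermediate lemmas from this paper (Theorems \ref{thm: ACC of LCT}, \ref{thm: relatively abundance for numerically trivial klt pair}, the decomposition theorems) closes that gap.

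Beyond the conditional nature, two of your reduction steps would not work as stated. First, running a relative MMP ``with respect to an auxiliary relatively ample divisor'' is not crepant for $K_X+B$, so $\mathbf{D}_{\mo}$ is \emph{not} untouched; the paper's own comparisons of moduli parts (in Proposition \ref{prop: horizontal is enough} and in Steps 1 and 4 of the proof of Theorem \ref{thm: Q-effective adjunction implies weak effective Gamma-base point freeness}) are only valid because the models compared are crepant for the relevant log divisor, or differ by a divisor pulled back from the base. Second, the tower/induction reduction does not preserve the hypotheses of effective adjunction: the boundary induced on an intermediate base has discriminant coefficients lying only in a DCC set (via Theorem \ref{thm: ACC of LCT}), not in a finite set of rationals, and its moduli summand is only known to be $\Gamma$-base-point free, not base-point free. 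This failure of the effective conjecture to be stable under the induction is exactly the reason the paper introduces the $\Gamma$-variant (Conjecture \ref{conj: Gamma-effective adjunction}) and proves Proposition \ref{prop: CY and Picard 1 is enough} for that variant only; your attempt to transport the same reduction to Conjecture \ref{conj: effective adjunction} itself runs into the obstruction the paper was designed to circumvent.
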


The statement in \cite[Conjecture 7.13.3]{PS09} is slightly restrictive by assuming additionally that there is a $\Qq$-divisor $\Theta$ on $X$ such that $K_X + \Theta\sim_{Z, \Qq} 0$ and $(F, (1- t)D|_F + t\Theta|_F)$ is klt for any $0 <t \leq 1$, where $F$ is the generic fiber of $f$.

\begin{remark}\label{rmk: fiber by curves}
This conjecture is known when general fibers of $f$ are curves (see \cite{Kod63II, Kod63III, Fuj86} and \cite[Theorem 8.1]{PS09}). \cite[Theorem 1.2]{Fuj03} establishes the semi-ample property of the moduli b-divisors when a general fiber is a K3 surface or an abelian variety.
\end{remark}

In the effective adjunction conjecture, only the coefficients of the horizontal part are required to be chosen from a fixed finite set of rationals. There is no restriction on coefficients of the vertical part (in fact, they can even be reals). The reason is given by the following result (at least in the klt case).

\begin{proposition}\label{prop: horizontal is enough}
If we assume that $(X, B)$ is klt in Conjecture \ref{conj: effective adjunction}, then for Conjecture \ref{conj: effective adjunction}, it suffices to assume that $B$ does not have vertical components over $Z$.
\end{proposition}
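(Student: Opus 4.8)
The plan is to reduce the vertical part of $B$ to zero without changing the moduli b-divisor $\mathbf{D}_{\mo}$, by absorbing the vertical components into a different horizontal boundary whose horizontal multiplicities still lie in a fixed finite set. Write $B = B^h + B^v$, the horizontal and vertical parts over $Z$. First I would recall that since $f\colon (X,B)\to Z$ is a klt-trivial fibration with $K_X+B\sim_{\Qq} f^*(K_Z+\mathbf{D}_{\di,Z}+\mathbf{D}_{\mo,Z})$, the relative canonical bundle formula \eqref{eq: b-boundary property} shows that $\mathbf{D}_{\mo}$ depends only on the birational equivalence class of the fibration and on $K_X+B$ \emph{up to pullback from $Z$}; in particular it is insensitive to modifying $B$ by an $\Rr$-Cartier divisor of the form $f^*(\text{something})$.

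The key step is the following. Let $F$ be the generic fiber of $f$. By hypothesis $(X,B)$ is klt and $K_X+B\sim_{Z,\Qq} f^*L$, so $K_F+B^h|_F = (K_X+B)|_F \sim_\Qq 0$ and $(F, B^h|_F)$ is klt. By Theorem \ref{thm: relatively abundance for numerically trivial klt pair} applied to $f$ (or rather the relative version of abundance for numerically trivial klt pairs, since $K_X+B\equiv 0/Z$), we have $K_X+B\sim_{Z,\Qq} 0$; hence, shrinking $Z$ if necessary and then taking closures, there is a $\Qq$-divisor $B'$ on $X$ with $B' \geq 0$ having the same horizontal part as $B$, with $B'$ \emph{vertical-free} after the shrinking, and such that $K_X+B'\sim_{Z,\Qq} 0$ as well. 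More precisely, one replaces the vertical part: since $B^v - (K_X+B - f^*L)$ is vertical and $\sim_{Z,\Qq}$ to a combination of principal divisors and pullbacks, by Lemma \ref{le: rational trivial is pullback} any vertical divisor that is $\sim_{Z,\Qq} 0$ is a pullback $f^*P$; so modulo pullbacks from $Z$ we may delete $B^v$ entirely. Setting $B'' = B - f^*P$ for the appropriate $P$, we get $K_X+B''\sim_{Z,\Qq}f^*(L - P)$ with $B''$ having no vertical components over the generic point structure we need, the pair $(X,B'')$ still sub-klt, and the horizontal multiplicities of $B''$ equal to those of $B$. The discriminant part changes by $-P$ and the moduli b-divisor $\mathbf{D}_{\mo}$ is \emph{unchanged}, because subtracting $f^*P$ shifts $L$ and $\mathbf{D}_{\di}$ by the same amount, cf. \eqref{eq: def of div and mod}.

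Thus Conjecture \ref{conj: effective adjunction} for $(X,B)$ follows from Conjecture \ref{conj: effective adjunction} for $(X,B'')$, which has no vertical components, and the bound on $m$ depends only on $\dim X$ and the horizontal multiplicities — which are the same for $B$ and $B''$. The main obstacle I anticipate is the bookkeeping around effectivity and the ``no vertical components'' condition: after subtracting $f^*P$ the divisor $B''$ need not be effective, so one must either argue that the effective-adjunction statement is insensitive to this (it is, since the moduli b-divisor and the relevant rank-one condition in Definition \ref{def: klt-trivial fibrations} are preserved under $B\mapsto B+f^*(\text{Cartier})$, at least after a further small perturbation of the horizontal coefficients inside the same finite set), or keep track of an extra effective vertical correction that is itself a pullback and hence harmless. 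A secondary point is making sure the klt hypothesis genuinely propagates to $(F, B^h|_F)$ and that the relative abundance input of Theorem \ref{thm: relatively abundance for numerically trivial klt pair} applies with $K_X+B$ only $\Qq$-Cartier — which it does, by hypothesis in Conjecture \ref{conj: effective adjunction}.
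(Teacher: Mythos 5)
There is a genuine gap, and it is exactly where you anticipate trouble but wave it away. Your plan hinges on showing that $B^v$ (or some vertical correction) is a pullback $f^*P$ from $Z$, and for that you invoke Lemma~\ref{le: rational trivial is pullback}, which requires the divisor in question to be vertical \emph{and} $\sim_{Z,\Rr} 0$. But on $X$ you only know $K_X+B\sim_{Z,\Rr}0$, i.e.\ $B^v\sim_{Z,\Rr}-(K_X+B^h)$, and the right-hand side is certainly not zero (it has horizontal components). Your intermediate claim that ``$B^v-(K_X+B-f^*L)$ is vertical'' is false — $K_X+B-f^*L$ contains the horizontal divisor $K_X+B^h$ — so the hypothesis of Lemma~\ref{le: rational trivial is pullback} is not met on $X$, and the step ``modulo pullbacks from $Z$ we may delete $B^v$ entirely'' does not go through. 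Shrinking $Z$ to the complement of $f(\Supp B^v)$ and then ``taking closures'' cannot rescue this: it discards precisely the locus where the vertical contribution lives, which is the locus on which the moduli b-divisor depends.

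The missing idea, which is the core of the paper's argument, is to change the model first: run a $(K_X+B^h)$-MMP over $Z$ (using the existence of good minimal models from \cite{HX13}, justified because $K_X+B^h$ is relatively trivial over the open set $U=Z\setminus f(\Supp B^v)$) to reach a good minimal model $(Y, B^h_Y)$ with ample model $p\colon Y\to W$ over $Z$, where $q\colon W\to Z$ is birational. On $Y$ one has $K_Y+B^h_Y\sim_{W,\Rr}0$ by construction of the ample model, and also $K_Y+B_Y\sim_{W,\Rr}0$ since $W\to Z$ is birational; subtracting gives $B^v_Y\sim_{W,\Rr}0$. Now $B^v_Y$ is vertical over $W$, so Lemma~\ref{le: rational trivial is pullback} applies \emph{on this new model, relative to $W$}, to produce $B^v_Y=p^*P$ with $P\geq 0$ (so no effectivity issue arises either). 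One then compares discriminant and moduli b-divisors: first for $(Y,B_Y)\to W$ versus $(Y,B^h_Y)\to W$, where they differ by $\overline P$ in the discriminant and agree in the moduli part; and second for $(X,B)\to Z$ versus $(Y,B_Y)\to W$, where they agree by the birational invariance of the canonical bundle formula. Your observation that $\mathbf{D}_{\mo}$ is unchanged under $B\mapsto B+f^*(\cdot)$ is correct and is used in the proof, but only \emph{after} the MMP has put $B^v$ into pullback form; without the MMP step the argument cannot start.
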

\begin{proof}
Let $B =B^h+B^v$, then $K_X+B^h \sim_{Z, \Rr} B^v$. Let $U= Z - f(\Supp B^v)$, then $(K_X+B^h)|_{f^{-1}(U)} \sim_{U, \Qq} 0$. By \cite[Theorem 1.1]{HX13}, $(X, B^h)$ has a good minimal model $(Y, B^h_Y)$ over $Z$. Let $B_Y^h, B_Y^v$ and $B_Y$ be the strict transforms of $B^h, B^v$ and $B$ on $Y$ respectively. By $K_X+B \sim_{Z, \Rr} 0$ and $(X, B)$ klt, $(Y, B_Y)$ is still klt. Let $p: Y \to W/Z$ be the morphism associated with $K_Y+B_Y$, then $q: W \to Z$ is a birational morphism. As $K_Y+B^h_Y+B^v_Y \sim_{Z,\Rr} 0$, we have $B^v_Y \sim_{W,\Rr} 0$. Moreover, $B^v_Y$ is vertical over $W$, hence by Lemma \ref{le: rational trivial is pullback}, there exists $P$ on $W$ such that $B^v_Y = p^*P$. Therefore,
\[
K_{Y}+B_Y= K_{Y}+B^h_Y+p^*P
\] 

Let ${\bf D}_{\di}^{B}$ (resp. ${\bf D}_{\di}^{B_Y}$, ${\bf D}_{\di}^{h}$) and ${\bf D}_{\mo}^{B}$ (resp. ${\bf D}_{\mo}^{B_Y}$, ${\bf D}_{\mo}^h$)  be the discriminant b-divisor and the moduli b-divisor for the klt-trivial fibration $(X, B) \to Z$ (resp. $(Y,B_Y) \to W$, $(Y,B^h_Y) \to W$). 

We claim:  (1)
\[
{\bf D}_{\di}^{B_Y} = {\bf D}_{\di}^h+\bar P, \text{~hence~}
{\bf D}_{\mo}^{B_Y}={\bf D}_{\mo}^h,
\] and (2)
\[
{\bf D}_{\mo}^{B} = {\bf D}_{\mo}^{B_Y}
\] viewing as b-divisors over $W$. 

For (1), let $p': Y' \to W'$ be a model of $p: Y \to W$ with birational morphisms $\tau: Y' \to Y$ and $\theta: W' \to W$ such that $p \circ \tau = \theta \circ p'$. Then
\[
\begin{split}
K_{Y'}+B_Y'=\tau^*(K_Y+B_Y) &= \tau^*(K_Y+B_Y^h) + p'^*\theta^*(P) \\
&= K_{Y'}+{(B_Y^h)'}+p'^*\theta^*(P). 
\end{split}
\] Hence for any prime divisor $\Theta$ on $W'$,
\[
\lct(\eta_\Theta; Y', {(B_Y^h)'}) =\lct(\eta_\Theta; Y', B_Y')  + \mult_\Theta(\theta^*P).
\] Therefore, (1) follows from \eqref{eq: def of div and mod}. Technically speaking, we take $L$ such that $K_Y+B^h_Y \sim_{\Rr} p^*L$, $K_Y+ B_Y\sim_{\Rr} p^*(L+P)$, and the moduli b-divisors are with respect to $L$ and $L+P$ respectively (see \eqref{eq: def of div and mod}).

For (2), by $K_X+B \sim_{Z, \Rr} 0$, for a common resolution $X \xleftarrow{s} T \xrightarrow{t} Y$, we have $K_T+B_T = s^*(K_X+B) = t^*(K_Y+B_Y)$. By the definition of the discriminant part in the canonical bundle formula, ${\bf D}_{\di}^{B} = {\bf D}_{\di}^{B_Y}$  viewing as b-divisors over $W$. Hence (2) holds true.

A combination of the above two claims shows the desired result.
\end{proof}

\subsection{$\Gamma$-effective adjunction conjecture}

Now we propose a variant of the effective adjunction conjecture. We need the following notion of $\Gamma$-base-point freeness.

\begin{definition}[$\Gamma$-base-point freeness]\label{def: Gamma-bpf}
Let $\Gamma \subset(0,1]$. An $\Rr$-b-divisor $\bf M$ is called $\Gamma$-base-point free if ${\bf M} = \sum_{i=1}^l a_i {\bf M_i}$ with $\sum_{i=1}^l a_i=1, a_i \in \Gamma$ and ${\bf M}_i$ is a base-point free b-divisor for each $i$.
\end{definition}

\begin{conjecture}[$\Gamma$-effective adjunction]\label{conj: Gamma-effective adjunction}
Suppose that $(X, D)$ is an klt (resp. lc) pair with $\dim X =d$. Let $f: (X, D) \to Z$ be an lc-trivial fibration as in Definition \ref{def: klt-trivial fibrations}. 
\begin{enumerate}
\item {\rm (Strong $\Gamma$-effective adjunction)} Assume that the coefficients of the horizontal divisors of $D$ belong to a finite set $I_h$. Then there exist a finite set $\Gamma \subset (0,1]$ and a positive integer $m$ which both depend only on $d$ and $I_h$, such that $m{\bf D}_{\mo}$ is $\Gamma$-base-point free, where ${\bf D}_{\mo}$ is the moduli b-divisor.
\item {\rm (Weak $\Gamma$-effective adjunction)} Assume that the coefficients of $D$ belong to a DCC set $I\subset (0,1]$. Then there exist a DCC set $J \subset(0,1]$, a finite set $\Gamma \subset (0,1]$ and a positive integer $m$ which all depend only on $d$ and $I$ satisfying the following property:
\begin{enumerate}
\item there is a b-divisor $\tilde {\bf D}_{\di}$, such that its trace $(\tilde {\bf D}_{\di})_{Z} \in J$,
\item there is a b-divisor $\tilde {\bf D}_{\mo}$, such that $m\tilde {\bf D}_{\mo}$ is $\Gamma$-base-point free, 
\item $(Z, \tilde {\bf D}_{\di})$ is klt (resp. lc), and
\item for birational morphisms $p: Z' \to Z$, $q: X' \to X$ and a morphism $f': X' \to Z'$ such that $f \circ q = p \circ f'$, we have
\[
q^*({K_X+D}) \sim_{\Rr} f'^*(K_{Z'}+(\tilde {\bf D}_{\di})_{Z'}+(\tilde {\bf D}_{\mo})_{Z'}).
\]
 \end{enumerate}
\end{enumerate}
\end{conjecture}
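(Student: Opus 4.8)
Since Conjecture~\ref{conj: Gamma-effective adjunction} is a conjecture I would not aim at an unconditional proof, but at two complementary things: (i) an inductive reduction, via the minimal model program, to two ``atomic'' cases --- $K_X\sim_{Z,\Qq}0$ (Calabi--Yau type fibers) and relative Picard number one (Fano type fibers) --- which is the part one can actually carry out; and (ii) a deduction of the full statement from the classical effective adjunction Conjecture~\ref{conj: effective adjunction} for $\Qq$-coefficients together with a decomposition of a DCC-coefficient pair into $\Qq$-coefficient ones. The whole point of the $\Gamma$-formulation is that, unlike the classical one, it is stable under these operations. To begin, I would reduce the weak version to the strong version. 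Let $f\colon(X,D)\to Z$ be an lc-trivial fibration with $D\in I$, $I$ a DCC set, and $F$ a general fiber. As $(F,D|_F)$ is log Calabi--Yau, Theorem~\ref{thm: ACC of num trivial} forces the horizontal coefficients of $D$ into a finite subset $I_h\subset I$ depending only on $d$ and $I$, while $({\bf D}_{\di})_Z$ already has coefficients in a DCC set depending only on $d,I$ by the end of Section~\ref{subsec: canonical bundle formula}. I would then apply Theorems~\ref{thm: DCC to finite} and~\ref{thm: Nak Corollary} to replace $({\bf D}_{\di})_Z$ by a nearby divisor $(\tilde{\bf D}_{\di})_Z$ in a finite (or DCC) set $J$ whose set of lc places is no larger, so that $(Z,\tilde{\bf D}_{\di})$ remains klt (resp.\ lc), giving (a) and (c); the correction is absorbed into the moduli part, and the compatibility (d) is forced by the pullback formula~\eqref{eq: b-boundary property} applied to the honest b-divisors $\tilde{\bf D}_{\di},\tilde{\bf D}_{\mo}$. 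What remains is the strong version: $m{\bf D}_{\mo}$ is $\Gamma$-base-point free when the horizontal coefficients lie in a finite set.

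\textbf{The MMP reduction.} Arguing as in Proposition~\ref{prop: horizontal is enough} (with an lc analog built on extensions of \cite{HX13}), I run a $(K_X+D^h)$-MMP over $Z$ to a good minimal model; since $K_X+D\sim_{Z,\Rr}0$ every relative extremal ray is $(K_X+D)$-trivial, so the MMP is $(K_X+D)$-crepant and the moduli b-divisor is unchanged, and afterwards the vertical part of $D$ becomes (Lemma~\ref{le: rational trivial is pullback}) a pullback from the intermediate base and hence irrelevant, so I may assume $D$ is horizontal with coefficients in $I_h$. If now $K_F\equiv0$, a $K_X$-MMP over $Z$ together with relative abundance (Theorem~\ref{thm: relatively abundance for numerically trivial klt pair}) reaches the first atomic case $K_X\sim_{Z,\Qq}0$. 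If $K_F\not\equiv0$, I pick a component $D_0$ of $D^h$ and run a $(K_X+D-tD_0)$-MMP over $Z$ for small $t>0$; since $K_X+D-tD_0\sim_{Z,\Rr}-tD_0$ is not pseudo-effective over $Z$, this terminates with a Mori fiber space $g\colon X'\to T$ over $Z$ with $\rho(X'/T)=1$, every step again being $(K_X+D)$-crepant. If $T=Z$ this is the second atomic case. Otherwise, composing canonical bundle formulas along the tower $X'\to T\to Z$ (a two-step adjunction I would need to set up carefully, checking that discriminants stay DCC and the uniform constants survive), the moduli b-divisor of $(X,D)\to Z$ splits as the moduli contribution of the fibration $(X',D')\to T$ of relative Picard number one plus the pullback of the moduli b-divisor of the induced lc-trivial fibration over $Z$ with base $T$, whose fiber dimension $\dim T-\dim Z$ is strictly smaller than $\dim X-\dim Z$; induction on the fiber dimension (base case $X\to Z$ birational, moduli part trivial) then closes the loop.

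\textbf{The atomic cases, and the obstruction.} Both atomic cases still require genuine input. Modulo Conjecture~\ref{conj: effective adjunction} for $\Qq$-coefficients --- this is in essence Theorem~\ref{thm: Q-effective adjunction implies weak effective Gamma-base point freeness} --- I would invoke Theorem~\ref{thm: Nak Corollary} and the decomposition theorems of Section~\ref{sec: decomposition theorems} to write $D=\sum_i r_iD^{(i)}$ with the $r_i$ drawn from a fixed finite set $\Gamma$, each $(X,D^{(i)})\to Z$ an lc-trivial fibration with rational coefficients and with $m{\bf D}^{(i)}_{\mo}$ base-point free for a uniform $m$; then $m{\bf D}_{\mo}=\sum_i r_i\,m{\bf D}^{(i)}_{\mo}$ is $\Gamma$-base-point free by Definition~\ref{def: Gamma-bpf}. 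The genuine obstacle is the Calabi--Yau atomic case $K_X\sim_{Z,\Qq}0$: this is exactly the classical problem of the moduli part for families of Calabi--Yau pairs, open beyond fibers by curves, K3 surfaces and abelian varieties (Remark~\ref{rmk: fiber by curves}), and none of the tools collected here reach it. A secondary technical difficulty is carrying out the two-step adjunction along $X'\to T\to Z$ for real coefficients while keeping $m$ and $\Gamma$ uniform throughout the induction.
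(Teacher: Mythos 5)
Your overall plan is the paper's: reduce the weak form to finitely many horizontal coefficients via Theorem~\ref{thm: ACC of num trivial}, run an MMP over $Z$ that is $(K_X+D)$-crepant to reach either a log Calabi--Yau base or a Mori fiber space of relative Picard number one (this is Proposition~\ref{prop: CY and Picard 1 is enough}, and it is also the spine of the proof of Theorem~\ref{thm: Q-effective adjunction implies weak effective Gamma-base point freeness}), and then use the decomposition theorems of Section~\ref{sec: decomposition theorems} to feed $\Qq$-coefficient pairs into the classical Conjecture~\ref{conj: effective adjunction}. The specific MMP you choose, $(K_X+D-tD_0)$-MMP for a horizontal component $D_0$, is a workable variant of the $(K_X+D^v)$-MMP used in Step 2 of the paper's proof of Theorem~\ref{thm: Q-effective adjunction implies weak effective Gamma-base point freeness}; both kill the horizontal part and land on a Mori fiber space, so this is a cosmetic difference.

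There is, however, one genuine error in the last step, and it is exactly the pitfall the distinction between the strong form (1) and the weak form (2) is designed to avoid. After writing $D=\sum_i r_i D^{(i)}$ and applying Conjecture~\ref{conj: effective adjunction} to each $(X,D^{(i)})\to Z$, you conclude ``$m{\bf D}_{\mo}=\sum_i r_i\,m{\bf D}^{(i)}_{\mo}$ is $\Gamma$-base-point free.'' This equality is false in general: the discriminant is built from log canonical thresholds, which are concave but not linear in the boundary, so $(D_{\di})_Z\neq\sum_i r_i (D^{(i)}_{\di})_Z$, and therefore ${\bf D}_{\mo}\neq\sum_i r_i {\bf D}^{(i)}_{\mo}$. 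The paper records precisely this warning in the remark following the proof of Theorem~\ref{thm: Q-effective adjunction implies weak effective Gamma-base point freeness}. What the decomposition actually yields is a \emph{different} pair of b-divisors $\tilde{\bf D}_{\di}\coloneqq\sum_i r_i{\bf D}^{(i)}_{\di}$ and $\tilde{\bf D}_{\mo}\coloneqq\sum_i r_i{\bf D}^{(i)}_{\mo}$, which satisfy (a)--(d) of the weak form (2) but need not coincide with ${\bf D}_{\di}$ and ${\bf D}_{\mo}$. So your opening reduction ``weak follows from strong'' is logically sound but is a step in the wrong direction: the decomposition argument proves the weak form directly and says nothing about the strong form, which remains open even granting Conjecture~\ref{conj: effective adjunction}. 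Your diagnosis of the genuine bottleneck --- the Calabi--Yau atomic case $K_X\sim_{Z,\Qq}0$ --- is accurate.
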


\begin{remark}
The $\tilde {\bf D}_{\di}$ and $\tilde {\bf D}_{\mo}$ in Conjecture \ref{conj: Gamma-effective adjunction} (2) may not be the discriminant and moduli b-divisors of the original fibration. 
\end{remark}

\begin{remark}
It is possible to state the ``~$\Gamma$-adjunction conjecture'' by removing ``the existence of an effective $m \in \Nn$'' in the above $\Gamma$-effective adjunction conjecture. Some of the rest results still hold true in that setting. We left appropriate modifications to interested readers.
\end{remark}

\section{Decomposition theorems}\label{sec: decomposition theorems}

The following results are generalizations of Theorem \ref{thm: Nak Corollary}.

\begin{theorem}\label{thm: decomposition theorem: infinite Gamma}
Let $d \in \Nn$ be an integer and $I \subset (0, 1]$ be a DCC set. Let 
\[
\begin{split}
\Ss \coloneqq \{(X, D) \mid & ~(X, D) \text{~lc,~} X \text{~a $\Qq$-factorial variety,~}\\
&~\dim X =d \text{~and~} D \in I\}.
\end{split}
\]
Then there exists a finite set $J \subset (0, 1] \cap \Qq$  satisfying the following property:

For any $(X, D) \in \Ss$, there exist $r_i \in \Rr_{>0}$ and divisors $D_i, 1 \leq i \leq q$  such that 
\begin{enumerate}
\item $(X, D_i)$ is lc with $D_i \in J$,
\item $\Supp D = \Supp D_i \text{~for each~} 1 \leq i \leq q$,
\item 
\[
(X, D) = \sum_{i=1}^q r_i (X, D_i) \text{~with~}\sum_{i=1}^q r_i =1.
\]
\end{enumerate} In the above, $r_i, q$ depend on the particular pair $(X, D)$.
\end{theorem}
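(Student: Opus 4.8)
The plan is to reduce, in two stages, to the case where every coefficient lies in a fixed finite set of rational numbers: first pass from the DCC set $I$ to a finite (but possibly irrational) coefficient set by Theorem \ref{thm: DCC to finite}, then use monotonicity of the lc condition to replace $D$ by finitely many ``coordinate box'' vertices, and finally rationalize those vertices uniformly in $X$ by Theorem \ref{thm: Nak Corollary}. We may assume $I \neq \emptyset$; being a DCC subset of $(0,1]$ it has a minimum $m > 0$. Applying Theorem \ref{thm: DCC to finite} with $\De = 0$ produces a finite set $I_0 \subset (0,1]$, depending only on $d$ and $I$, such that every $(X,D) \in \Ss$ admits an effective divisor $\bar D \geq D$ with $(X, \bar D)$ lc, $\Supp \bar D = \Supp D$ and $\bar D \in I_0$. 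Fix once and for all a rational number $c_0 \in (0, m) \setminus I_0$; since each coefficient of $\bar D$ dominates the corresponding coefficient of $D$, which is at least $m$, every coefficient of $\bar D$ exceeds $c_0$.

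Write $D = \sum_i a_i P_i$ and $\bar D = \sum_i \bar a_i P_i$ over the common prime components $P_i$, so that $c_0 < a_i \leq \bar a_i$ for all $i$. Since $X$ is $\Qq$-factorial and the lc condition is inherited by any smaller effective boundary, $(X, E)$ is lc for every effective divisor $E$ with $\mult_{P_i} E \in [c_0, \bar a_i]$ for all $i$. These $E$ form a product of intervals that contains $D$, so $D = \sum_\sigma \mu_\sigma V_\sigma$ with $\mu_\sigma \geq 0$ and $\sum_\sigma \mu_\sigma = 1$, where the vertices are $V_\sigma = \sum_i x_i^\sigma P_i$ with $x_i^\sigma \in \{c_0, \bar a_i\}$. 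In particular each $(X, V_\sigma)$ is lc, $\Supp V_\sigma = \Supp D$, and the coefficients of every vertex $V_\sigma$ lie in the fixed finite set $I_0 \cup \{c_0\}$ of positive reals, say $I_0 \cup \{c_0\} = \{e_1, \dots, e_{N+1}\}$.

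Grouping the components of a given vertex $V_\sigma$ by their coefficient, we may write $V_\sigma = \sum_{l=1}^{N+1} e_l G_l$ with each $G_l$ a reduced effective Weil divisor on $X$ (possibly the zero divisor). Hence every such vertex $(X, \sum_l e_l G_l)$ lies in the family to which Theorem \ref{thm: Nak Corollary} applies, with $c = N+1$, coefficient tuple $(e_1, \dots, e_{N+1})$, and any fixed $\ep \in (0, c_0)$; note that the data $I_0, c_0, \ep$ depend only on $d$ and $I$. That theorem furnishes $s \in \Nn$ and rational vectors $(b_1^j, \dots, b_{N+1}^j)$, $1 \leq j \leq s$, with $|e_l - b_l^j| < \ep$ for all $l$, such that for every vertex $(X, \sum_l b_l^j G_l)$ is lc and $\sum_l e_l G_l \in \Conv\big(\sum_l b_1^j G_l, \dots, \sum_l b_s^j G_l\big)$. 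Set $J \coloneqq \{b_l^j : 1 \leq j \leq s,\ 1 \leq l \leq N+1\} \cap (0,1]$, a finite subset of $(0,1] \cap \Qq$ depending only on $d$ and $I$; any $b_l^j$ that is attached to a nonzero $G_l$ for some vertex is positive (because $\ep < c_0 \leq e_l$) and at most $1$ (because the corresponding pair is lc), hence lies in $J$, so each such vertex decomposes as $V_\sigma = \sum_j r_j^\sigma V_{\sigma, j}$ with $r_j^\sigma \geq 0$, $\sum_j r_j^\sigma = 1$, $V_{\sigma,j} \coloneqq \sum_l b_l^j G_l \in J$, $(X, V_{\sigma,j})$ lc, and $\Supp V_{\sigma,j} = \Supp D$. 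Substituting into the convex expression for $D$ gives
\[
D = \sum_{\sigma, j} \big(\mu_\sigma r_j^\sigma\big)\, V_{\sigma,j},
\]
a convex combination of the divisors $V_{\sigma,j} \in J$ with $(X, V_{\sigma,j})$ lc and $\Supp V_{\sigma,j} = \Supp D$; discarding the zero weights and re-indexing yields the pairs $(X, D_i)$, the numbers $r_i$ and the integer $q$ as required ($q$ and the $r_i$ depending on the particular $(X,D)$, while $J$ does not).

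The main obstacle is precisely the gap left by Theorem \ref{thm: DCC to finite}: it only bounds coefficients from above by a finite, and possibly irrational, set, so one cannot decompose $D$ directly. What bridges this is the elementary remark that the lc condition passes to all smaller effective boundaries, which allows $D$ to be traded for the vertices of a coordinate box — finitely many divisors whose coefficients are drawn from the fixed set $I_0 \cup \{c_0\}$ and which are still lc — after which the uniformity of Theorem \ref{thm: Nak Corollary} over all $\Qq$-factorial varieties of dimension $d$ (for a fixed coefficient tuple) carries out the rationalization. The only bookkeeping to watch is keeping every coefficient inside $(0,1]$ so that supports are preserved throughout, which is the role of the auxiliary rational $c_0 < m$ and of taking $\ep$ small.
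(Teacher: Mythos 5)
The proof is correct and follows essentially the same route as the paper's: pass to $\bar D \geq D$ with coefficients in a finite set via Theorem \ref{thm: DCC to finite}, rationalize via Theorem \ref{thm: Nak Corollary}, and bridge the gap from $\bar D$ down to $D$ by a coordinate-box decomposition anchored at a rational lower corner, using monotonicity of the lc condition on $\Qq$-factorial varieties. The only difference is the order of operations — you decompose $D$ into the box vertices $V_\sigma$ before rationalizing, whereas the paper applies Theorem \ref{thm: Nak Corollary} to $\bar D$ first and then invokes Lemma \ref{le: show for the bigger one} to reinsert a small rational coefficient $\delta$ — which is a cosmetic, not substantive, deviation.
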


\begin{theorem}\label{thm: decomposition theorem: Gamma finite}
Let $d \in \Nn$ be an integer.  Assume that $I \subset (0, 1]$ is a DCC set and $I' \subset (0, 1]$ is a finite set. Let 
\[
\begin{split}
\Ff \coloneqq & \{(X, \De+D) \mid (X, \De+D) \text{~lc,~} X \text{~a $\Qq$-factorial variety,~} \dim X =d,\\
&~\De \in I', D \in I\text{~and~} \De, D \text{~do not have common components}\}.
\end{split}
\]

Then there exist a finite set $\Gamma \subset (0,1]$, a DCC set $J\subset (0,1]$, and a finite set $J'  \subset (0,1] \cap \Qq$  satisfying the following property:

For any $(X, \De+D) \in \Ff$, there exist $r_i \in \Gamma$ and divisors $\De_i, D_i, 1 \leq i \leq q$  such that
\begin{enumerate}
\item $(X, \De_i+D_i)$ is lc, 
\item $\De_i \in J', D_i \in J$ and $\Supp \De_i = \Supp \De, \Supp D_i = \Supp D$,
\item 
\[
(X, \De+D) = \sum_{i=1}^q r_i (X, \De_i+D_i) \text{~with~}\sum_{i=1}^q r_i =1.
\]
\end{enumerate}

Moreover, the above statement can be strengthened in the following cases.
\begin{enumerate}[(i)]
\item If a $(X, \De+D) \in \Ff$ is klt, then $(X, \De_i+D_i)$ in (1) can be chosen to be klt as well.
\item For a $(X, \De+D) \in \Ff$, and any morphism $X \to Z$ such that $K_X+\De \equiv 0/Z$, then we can further assume that $K_X+\De_i \equiv 0/Z$ for each $i$.
\end{enumerate}
\end{theorem}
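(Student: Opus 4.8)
I would reduce everything to the finite-coefficient decomposition of Theorem~\ref{thm: Nak Corollary}: first replace the DCC part $D$ by a finite-coefficient divisor $\bar D\ge D$ (via Theorem~\ref{thm: DCC to finite}), decompose the resulting pair, and then transfer the convex decomposition back to $D$.

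\textbf{Main statement.} For $(X,\De+D)\in\Ff$, Theorem~\ref{thm: DCC to finite} (with any fixed $\tau>0$) gives a finite set $I_0\subset(0,1]$ depending only on $d,I,I'$ and a divisor $\bar D\in I_0$ with $(X,\De+\bar D)$ lc, $\Supp\bar D=\Supp D$ and $\bar D-D\ge 0$. Write $\De=\sum_{a\in I'}a\De_a$ and $\bar D=\sum_{p\in I_0}p\bar D_p$ with $\De_a,\bar D_p$ reduced (some possibly zero); since a prime component has a single coefficient and $\De,\bar D$ share no component, the divisors $\{\De_a\}\cup\{\bar D_p\}$ have pairwise disjoint supports, and the combined grouped coefficient vector (of length $|I'|+|I_0|$) is a \emph{fixed} vector. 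Hence Theorem~\ref{thm: Nak Corollary} applies to the pairs $\big(X,\sum_a a\De_a+\sum_p p\bar D_p\big)$ with $\ep$ uniformly small, say $\ep<\min(\min I,\min I',\min I_0)$, and produces $s\in\Nn$, rational weights $\gamma_1,\dots,\gamma_s>0$ with $\sum_j\gamma_j=1$, and rational vectors $\big((b^{(j)}_a)_a,(q^{(j)}_p)_p\big)$ --- all depending only on $d,I,I'$ (the weights and approximants being determined, as in the proof of Theorem~\ref{thm: Nak Corollary}, by the coefficient vector) --- such that, putting $\De^{(j)}:=\sum_a b^{(j)}_a\De_a$ and $\bar D^{(j)}:=\sum_p q^{(j)}_p\bar D_p$, each $(X,\De^{(j)}+\bar D^{(j)})$ is lc and (using the disjointness of supports) $\De=\sum_j\gamma_j\De^{(j)}$, $\bar D=\sum_j\gamma_j\bar D^{(j)}$. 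Now set $D^{(j)}:=D+\bar D^{(j)}-\bar D$, so $\bar D^{(j)}-D^{(j)}=\bar D-D\ge 0$. Since $\ep<\min I$ the coefficients of $D^{(j)}$ are positive, at most $1$, and of the form $d+(q^{(j)}_p-p)$ with $d\in I$ and $q^{(j)}_p-p$ in a fixed finite set, hence lie in a DCC set $J\subset(0,1]$ depending only on $d,I,I'$ (a DCC set plus a finite set is DCC), with $\Supp D^{(j)}=\Supp D$; likewise $\De^{(j)}\in J':=\{b^{(j)}_a\}\subset\Qq$, a finite set, with $\Supp\De^{(j)}=\Supp\De$. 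The pair $(X,\De^{(j)}+D^{(j)})$ is lc because $0\le\De^{(j)}+D^{(j)}\le\De^{(j)}+\bar D^{(j)}$, $K_X+\De^{(j)}+D^{(j)}$ is $\Rr$-Cartier ($X$ being $\Qq$-factorial), and $(X,\De^{(j)}+\bar D^{(j)})$ is lc. Finally $\sum_j\gamma_j(\De^{(j)}+D^{(j)})=\De+D$, so $(X,\De+D)=\sum_j\gamma_j(X,\De^{(j)}+D^{(j)})$ with $r_i:=\gamma_i\in\Gamma:=\{\gamma_1,\dots,\gamma_s\}$, and $\Gamma,J',J$ depend only on $d,I,I'$.

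\textbf{Refinement (i).} Also arrange, via the ``moreover'' clause of Theorem~\ref{thm: Nak Corollary}, that ${\rm LCP}(X,\De^{(j)}+\bar D^{(j)})\subseteq{\rm LCP}(X,\De+\bar D)$. Suppose $(X,\De+D)$ is klt and fix a divisor $P$ over $X$. On a common log resolution $g\colon Y\to X$ one has $a(P;X,\De^{(j)}+D^{(j)})=a(P;X,\De^{(j)}+\bar D^{(j)})+\mult_P\big(g^*(\bar D-D)\big)$ and $\mult_P\big(g^*(\bar D-D)\big)=a(P;X,\De+D)-a(P;X,\De+\bar D)$. If $a(P;X,\De^{(j)}+\bar D^{(j)})>0$ then $a(P;X,\De^{(j)}+D^{(j)})\ge a(P;X,\De^{(j)}+\bar D^{(j)})>0$ since $\bar D-D\ge 0$. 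If $a(P;X,\De^{(j)}+\bar D^{(j)})=0$ then $P\in{\rm LCP}(X,\De+\bar D)$, so $\mult_P\big(g^*(\bar D-D)\big)=a(P;X,\De+D)>0$ by kltness, whence $a(P;X,\De^{(j)}+D^{(j)})=a(P;X,\De+D)>0$. Thus every $(X,\De^{(j)}+D^{(j)})$ is klt.

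\textbf{Refinement (ii), and the main obstacle.} Given $X\to Z$ with $K_X+\De\equiv 0/Z$, the idea is to run the perturbation of $\De$ \emph{inside} the affine subspace $A_Z:=\{(t_a)_{a\in I'}:\sum_a t_a[\De_a]=-[K_X]\text{ in }N^1(X/Z)_\Rr\}$: it is defined over $\Qq$ (the $[\De_a]$ are integral numerical classes and $[K_X]\in N^1(X/Z)_\Qq$ as $X$ is $\Qq$-factorial), it contains the coefficient vector $(a)_{a\in I'}$, and every vector in it gives $K_X+\sum_a t_a\De_a\equiv 0/Z$. One parametrizes $A_Z$ by a $\Qq$-affine isomorphism and applies Theorem~\ref{thm: Nak} exactly as in the proof of Theorem~\ref{thm: Nak Corollary}, absorbing the (possibly irrational) coefficients of $\bar D$ by enlarging the $\Qq$-basis; the parametrizing coordinates of $(a)_{a\in I'}$ then necessarily lie in the fixed finite-dimensional space ${\rm Span}_\Qq(\{1\}\cup I')$. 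The delicate point --- which I expect to be the main obstacle --- is the uniformity: although $A_Z$ varies with $X\to Z$, one must still keep the weights $\gamma_j$ and the perturbed coefficients $b^{(j)}_a$ inside the \emph{same} finite sets $\Gamma,J'$ depending only on $d,I,I'$. This needs a careful tracking of the constant of Theorem~\ref{thm: Nak} along the one-parameter perturbations, using that the perturbation parameters, the linear functions describing the coefficients, and the facet normals of the relevant log canonical polytopes (the nonnegative integers $\mult_P(g^*\De_a)$ occurring in the finitely many constraints active near $(a)_{a\in I'}$) have complexity bounded in terms of $d,I,I'$. The main statement and refinement (i) are otherwise routine once Theorems~\ref{thm: DCC to finite} and \ref{thm: Nak Corollary} are available.
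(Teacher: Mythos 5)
Your argument for the main statement and for refinement (i) follows exactly the paper's route: pass from $D$ to a finite-coefficient $\bar D$ via Theorem \ref{thm: DCC to finite}, decompose $(X,\De+\bar D)$ by Theorem \ref{thm: Nak Corollary}, transfer back by $D_i \coloneqq \bar D_i-(\bar D-D)$, and in the klt case use the ${\rm LCP}$-inclusion clause of Theorem \ref{thm: Nak Corollary} together with the identity $\De_i+D_i=(\De_i+\bar D_i)-(\bar D-D)$. This is correct. One cosmetic remark: the combined coefficient vector of $\De+\bar D$ is not literally a single fixed vector but ranges over the finitely many sub-tuples of $I'\cup I_0$; one applies Theorem \ref{thm: Nak Corollary} once for each such tuple and takes the union of the resulting $\Gamma,J',J$, which you effectively do.

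For refinement (ii) you correctly identify the idea the paper uses — perturb $\De$ inside the rational locus where $K_X+\sum t_a\De_a\equiv 0/Z$ — but you stop short and declare the universality of $\Gamma,J'$ the main obstacle. That obstacle is illusory, and the observation you are missing is the one that makes the paper's treatment work. Write $\De=\sum_\lambda c_\lambda\Upxi_\lambda$ and fix once and for all the $\Qq$-basis $\{c_0=1,c_1,\dots,c_p,v_1,\dots,v_w\}$ of ${\rm Span}_\Qq(\{1\}\cup I'\cup I_0)$ used in the proof of Theorem \ref{thm: Nak Corollary}. Each numerical-triviality constraint $K_X\cdot\ell+\sum_\lambda c_\lambda(\Upxi_\lambda\cdot\ell)=0$ has rational coefficients (since $X$ is $\Qq$-factorial); substituting the $\Qq$-linear expressions ${\bf c}_\lambda$ for the non-basis $c_\lambda$ turns it into a $\Qq$-linear relation among $c_0,c_1,\dots,c_p,v_1,\dots,v_w$, which vanishes identically because these are $\Qq$-linearly independent. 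Hence \emph{every} rational perturbation of the basis coordinates — in particular the one already produced for the main statement — automatically satisfies $K_X+\De_k\equiv0/Z$, so (ii) holds with the same universal $\Gamma,J,J'$ and no further tracking of constants along variable one-parameter families is needed. The paper packages this by defining the rational subspace $V$ cut out by the triviality equations, intersecting with the polytope $Q$ from Theorem \ref{thm: Nak Corollary}, and taking vertices of $V\cap Q$; the uniformity you were worried about comes precisely from the fact that, in the parametrizing coordinates, $V$ imposes no genuinely new condition at the $\Qq$-linearly-independent base point.
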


\begin{remark}
The main difference between Theorem \ref{thm: decomposition theorem: infinite Gamma} and Theorem \ref{thm: decomposition theorem: Gamma finite} is: in Theorem \ref{thm: decomposition theorem: infinite Gamma}, $J$ is a finite set while $\Gamma$ may be an infinite set. On the other hand, in Theorem \ref{thm: decomposition theorem: Gamma finite}, $J$ is a DCC set while  $\Gamma$ is a finite set, and thus $q$ is bounded above. We will use Theorem \ref{thm: decomposition theorem: Gamma finite} to study the relation between the effective adjunction conjecture and the $\Gamma$-effective adjunction conjecture.
\end{remark}

Theorem \ref{thm: decomposition theorem: infinite Gamma} and Theorem \ref{thm: decomposition theorem: Gamma finite} can be shown by similar argument.

\begin{lemma}\label{le: show for the bigger one} 
If there is a finite set $J$ such that $(X, \sum_{1\leq i \leq k} \bar d_i D_i), \bar d_i \geq d_i$ can be decomposed as in Theorem \ref{thm: decomposition theorem: infinite Gamma} by coefficients in $J$, then the same thing holds true for $(X, \sum_{1\leq i \leq k} d_i D_i)$ after enlarging $J$ (but it is still a finite set).
\end{lemma}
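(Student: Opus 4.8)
The plan is to transport the given decomposition of the larger pair $(X,\bar D)$ to $(X,D)$ by rescaling coefficients component by component, and then to restore rationality of the coefficients with Theorem~\ref{thm: Nak Corollary}. Write $\bar D=\sum_{i=1}^{k}\bar d_iD_i$ and $D=\sum_{i=1}^{k}d_iD_i$. We may assume $d_i>0$ for all $i$: the lemma concerns only the components occurring in $D$, and discarding the $D_i$ with $d_i=0$ from both $\bar D$ and the given decomposition still leaves a decomposition by coefficients in $J$ of the restricted pair. Thus $\bar d_i\ge d_i>0$ for all $i$, and, writing the hypothesis as $(X,\bar D)=\sum_{j=1}^{q}r_j(X,\bar D^{(j)})$ with $r_j\in\Rr_{>0}$, $\sum_jr_j=1$, each $(X,\bar D^{(j)})$ lc, and $\bar D^{(j)}=\sum_i\bar d^{(j)}_iD_i$ having coefficients in the finite set $J\subset(0,1]\cap\Qq$, we have $\Supp\bar D^{(j)}=\Supp\bar D=\Supp D$.

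\emph{Rescaling.} Set $\lambda_i:=d_i/\bar d_i\in(0,1]$ and $D^{(j)}:=\sum_i\lambda_i\bar d^{(j)}_iD_i$. Then $\sum_jr_jD^{(j)}=\sum_i\lambda_i\bigl(\sum_jr_j\bar d^{(j)}_i\bigr)D_i=\sum_i\lambda_i\bar d_iD_i=D$; each $(X,D^{(j)})$ is lc, because $X$ is $\Qq$-factorial and $0\le D^{(j)}\le\bar D^{(j)}$; and $\Supp D^{(j)}=\Supp\bar D^{(j)}=\Supp D$, since every $\lambda_i>0$. Hence $(X,D)=\sum_jr_j(X,D^{(j)})$ is a decomposition of the form required by Theorem~\ref{thm: decomposition theorem: infinite Gamma}, except that the coefficients $\lambda_i\bar d^{(j)}_i$ need not be rational. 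Note that a coefficient of some $D^{(j)}$ equal to $1$ forces $\lambda_i=\bar d^{(j)}_i=1$, so such a coefficient is already rational.

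\emph{Rationalization.} Fix $j$ and apply Theorem~\ref{thm: Nak Corollary} to the lc pair $(X,D^{(j)})$, taking $a_i=\lambda_i\bar d^{(j)}_i>0$ and the parameter $\ep>0$ smaller than $\min_ia_i$ and than $1-\max\{a_i:a_i<1\}$. Discarding terms of zero weight, we obtain finitely many rational vectors $(b^{j,m}_i)_i$ and weights $\gamma^{j,m}>0$ with $\sum_m\gamma^{j,m}=1$ such that each $(X,\sum_ib^{j,m}_iD_i)$ is lc and $D^{(j)}=\sum_m\gamma^{j,m}\sum_ib^{j,m}_iD_i$. The choice of $\ep$ keeps $0<b^{j,m}_i<1$ at the indices with $a_i<1$, while at the indices with $a_i=1$ the (rational) coefficient is not perturbed by the construction of Theorem~\ref{thm: Nak Corollary}; thus $b^{j,m}_i\in(0,1]$ for all $i$ and $\Supp\sum_ib^{j,m}_iD_i=\Supp D$. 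Substituting into $(X,D)=\sum_jr_j(X,D^{(j)})$ gives $(X,D)=\sum_{j,m}r_j\gamma^{j,m}(X,\sum_ib^{j,m}_iD_i)$ with $\sum_{j,m}r_j\gamma^{j,m}=1$ and all coefficients in the finite set $J':=J\cup\{b^{j,m}_i\}_{j,m,i}\subset(0,1]\cap\Qq$: this is the desired decomposition of $(X,D)$, with $J$ enlarged to the finite set $J'$.

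The delicate point is to meet the three constraints on the coefficients of the pieces simultaneously --- rationality, the bound $\le 1$, and agreement of support with $D$ --- since rescaling keeps the last two but destroys rationality, while an unguarded appeal to Theorem~\ref{thm: Nak Corollary} could move a coefficient sitting at $1$. The rescaling step is what reconciles these: any coefficient that reaches $1$ gets pinned (it forces $\lambda_i=\bar d^{(j)}_i=1$), so the perturbation provided by Theorem~\ref{thm: Nak Corollary} only has to act on coefficients strictly below $1$, where a sufficiently small perturbation preserves lc-ness, the support, and the bound $\le 1$. The one fact one should extract from the proof of Theorem~\ref{thm: Nak Corollary} is that coefficients already lying in $\Qq$ are left unmoved --- immediate, since they belong to ${\rm Span}_\Qq\{1\}$ and hence are not among the generators $r_1,\dots,r_{c'}$ that get perturbed in the argument of \cite{Nak16}.
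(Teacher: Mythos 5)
Your proof takes a genuinely different route from the paper, but it has a gap that matters for the way the lemma is used. The paper's proof keeps the enlargement of $J$ uniform: it fixes once and for all a rational $\delta < \min J$, replaces one coordinate at a time by $\delta$ (yielding new vertices $\balpha_j$ with coefficients still in $J \cup \{\delta\}$, still lc since $\balpha_j \cdot \mathbf D \leq \bbeta_j\cdot\mathbf D$), and concludes that $(d_1,\dots,d_k)$ lies in the convex hull of the old and new vertices. The enlarged set is $J\cup\{\delta\}$, independent of the particular pair.

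Your rescaling idea is clean as far as it goes, but the rationalization step destroys uniformity. The scaling factors $\lambda_i=d_i/\bar d_i$ depend on the pair $(X,D)$, because the $d_i$ range over the DCC set $I$ rather than a finite set. Consequently the inputs $a_i=\lambda_i\bar d^{(j)}_i$ to Theorem~\ref{thm: Nak Corollary} vary with $(X,D)$, and so do its outputs $b^{j,m}_i$; Theorem~\ref{thm: Nak Corollary} is a statement about a fixed tuple $(a_1,\dots,a_c)$ and gives no control as the $a_i$ move through a DCC set. Your final $J'=J\cup\{b^{j,m}_i\}_{j,m,i}$ is therefore finite for each individual pair but is not a single finite set working for all $(X,D)\in\Ss$. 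This is exactly the content the lemma is needed for: in the proof of Theorem~\ref{thm: decomposition theorem: infinite Gamma} the lemma is invoked to upgrade the decomposition of the finitely many coefficient types $\bar D\in I_0$ to a single finite $J$ valid uniformly over $\Ss$. With your $J'$ depending on the pair, the union over $\Ss$ may be infinite, and the theorem does not follow. The side remarks in your proof (coefficients equal to $1$ are pinned, and Theorem~\ref{thm: Nak Corollary} leaves coefficients in ${\rm Span}_\Qq\{1\}$ unmoved) are correct but do not address this.
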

\begin{proof}

Fix $\delta\in\Qq_{>0}$ such that $\delta < \min J$. Replace $J$ by $J \cup \{\delta\}$. It is enough to show the claim when there exists $l$ such that $\bar d_l \geq d_l$ and $\bar d_i = d_i$ for $1 \leq i \leq k, i\neq l$.

We write $\bbeta$ for $(\beta_1, \cdots, \beta_k) \in \Rr^k$ and $\bbeta \cdot {\bf D}$ for $\sum_{i=1}^k \beta_i D_i$. If $(X, \bar d_l D_l+\sum_{1 \leq i \leq k, i\neq l} d_i D_i)$ has a decomposition $\sum_{1 \leq j \leq q} r_j (X, \bbeta_j \cdot {\bf D})$ satisfying the property claimed in Theorem \ref{thm: decomposition theorem: infinite Gamma}, then
\[
(d_1, \cdots, \bar d_l, \cdots, d_k) \in \Conv(\{\bbeta_j \mid 1 \leq j \leq q\}) 
\] with $\bbeta_j \cdot {\bf D}\in J$ and $(X, \bbeta_j \cdot {\bf D})$ lc for each $j$. Moreover, $\Supp \bbeta_j \cdot {\bf D}_j = \cup_i\Supp D_i$. 

For $\bbeta$ above, write $\balpha = (\beta_1, \cdots, \beta_{l-1}, \delta, \beta_{l+1}, \cdots, \beta_k)$.  Then
\[
(d_1, \cdots, d_l, \cdots, d_k) \in \Conv(\{\bbeta_j \mid 1 \leq j \leq q\} \cup\{\balpha_j \mid 1 \leq j \leq q\}).
\] Thus there is a decomposition of $(X, \sum_{1\leq i \leq k} d_i D_i)$ using $(X, \bbeta_i \cdot {\bf D})$ and $(X, \balpha_j \cdot {\bf D})$. Moreover, $\balpha_j \cdot {\bf D} \in J $ and $(X, \balpha_j \cdot {\bf D})$ is lc for each $j$. As $\delta>0$, we still have $\Supp \balpha_j \cdot {\bf D} = \cup_i\Supp D_i$. 
\end{proof}

\begin{proof}[Proof of Theorem \ref{thm: decomposition theorem: infinite Gamma}]
By Theorem \ref{thm: DCC to finite} (take $\De=0$), we have a finite set $I_0$, such that for a $(X, D)\in\Ss$, there exists an lc pair $(X, \bar D)$ with $\bar D \geq D$ and $\bar D \in I_0$. Consider the set of lc pairs
\[
\{(X, \bar D) \mid (X, D) \in \Ss\}.
\] By Theorem \ref{thm: Nak Corollary}, there exists a finite rational set $J$ depending only on $d$ and $I$ such that 
\[
\bar D \in \Conv(D_1, \ldots, D_q)
\] with $D_i \in J$, and $(X, D_i)$ is lc. Then by Lemma \ref{le: show for the bigger one}, we are done.
\end{proof}

\begin{proof}[Proof of Theorem \ref{thm: decomposition theorem: Gamma finite}]
We use the above notation. Replacing $I$ by $I' \cup I$, we can assume that $I' \subset I$.  Take $\tau = \frac 1 2 \min I$,  and let $I_0$ be the finite set in Theorem \ref{thm: DCC to finite}. Then for any $(X, \De+D) \in \Ff$, there exits an lc pair $(X, \De+\bar D)$ with $\bar D \in I_0$ such that $\bar D - D \in [0, \tau]$. As $I' \cup I_0$ is a finite set, by Theorem \ref{thm: Nak Corollary}, there exist a finite set $J' \subset \Qq$ and a finite set $\Gamma \subset \Rr_{>0}$ such that
\begin{equation}\label{eq: use Nak cor}
(X, \De+\bar D) = \sum_{i=1}^q r_i (X, \De_i+\bar D_i), \quad \sum_{i=1}^q r_i =1,
\end{equation} where $(X, \De_i+\bar D_i)$ is lc with $\De_i, \bar D_i \in J', r_i \in \Gamma$, and $\Supp \De_i = \Supp \De$, $\Supp \bar D_i = \Supp \bar D$. Moreover, we can assume that $\De - \De_i, \bar D - \bar D_i \in (- \frac 1 3 \tau, \frac 1 3 \tau)$. In particular, $\Supp D = \Supp \bar D = \Supp \bar D_i$, and $\bar D_i \in (\frac{5}{3}\tau, 1]$.

Thus
\begin{equation}\label{eq: decomposition}
\big(X, \De+D\big) = \sum_{i=1}^q r_i \big(X, \De_i+(\bar D_i-(\bar D -D))\big).
\end{equation} As $\bar D - D  \in [0, \tau]$, we have $\De_i+(\bar D_i-(\bar D -D)) \geq 0$ with $$\Supp \De_i = \Supp \De \text{~and~} \Supp \big(\bar D_i-(\bar D -D)\big) = \Supp D.$$ 

Because $\De_i, \bar D_i \in J'$, $\bar D \in I_0$ with both $J', I_0$ are finite sets, and  $D \in I$ which is a DCC set, coefficients of $D_i \coloneqq (\bar D_i-(\bar D -D))$ and $\De_i$ belong to a set $J$ which is still DCC. Moreover, $(X, \De_i + D_i)$ is lc as $\bar D_i \geq D_i$.

To show (i) in the second part, suppose that $(X, \De+D) \in \Ff$ is klt, and $P$ is an lc place of $(X, \De_i+D_i)$. Notice that by Theorem \ref{thm: Nak Corollary}, 
\[
{\rm LCP}(X, \De_i+\bar D_i) \subset {\rm LCP}(X, \De+\bar D)
\] for each $i$ in \eqref{eq: use Nak cor}, thus $P$ is an lc place of $(X, \De+\bar D)$. Because $a(P; X, \De+D)>0$ and
\[
\De_i+D_i = (\De_i+\bar D_i)- \big((\De+\bar D) - (\De+D) \big),
\] we have $a(P; \De_i+D_i)>0$. This is a contradiction and thus $(X, \De_i+D_i)$ is klt.

To show (ii) in the second part, let $I'=\{c_1, \ldots, c_l\}$ and the set of coefficients of all the $\bar D$ be $I_0 = \{v_1, \ldots, v_t\}$. As in Theorem \ref{thm: Nak}, assume that $c_0=1, c_1, \ldots, c_p$ are $\Qq$-linearly independent, and $\{c_0, c_1, \ldots, c_p, v_1, \ldots, v_w\}$ is a basis of $${\rm Span}_\Qq (c_0, c_1, \ldots, c_l, v_1, \ldots v_t).$$ We use $x_{c_i}$ and $x_{v_j}$ to denote the variables corresponding to $c_i$ and $v_j$ respectively.

There are rational linear functions
\[
\begin{split}
&{\bf c}_\lambda ({x_{c_i}, x_{v_j} \mid 1 \leq i \leq p, 1 \leq j \leq w}), ~ p<\lambda \leq l, \text{~and~}\\
&{\bf v}_\sigma({x_{c_i}, x_{v_j} \mid 1 \leq i \leq p, 1 \leq j \leq w}), ~ w < \sigma \leq t
\end{split}
\] such that
\[
\begin{split}
&c_\lambda = {\bf c}_\lambda({{c_i}, {v_j} \mid 1 \leq i \leq p, 1 \leq j \leq w}), ~ p<\lambda \leq l, \text{~and~}\\
&c_\sigma ={\bf v}_\sigma({{c_i}, {v_j} \mid 1 \leq i \leq p, 1 \leq j \leq w}), ~ w < \sigma \leq t.
\end{split}
\]

Suppose that $K_X+\De \equiv 0/Z$ with $\De = \sum_{i=1}^l c_i \Upxi_i$ where $\Upxi_i$ is a $\Qq$-Cartier Weil divisor (because $X$ is $\Qq$-factorial). Then there are finite rational linear equations 
\begin{equation}\label{eq: intersection eq}
{\ell}_{(X,\De)/Z, \mu}({x_{c_i}\mid 1 \leq i \leq p})=0
\end{equation} obtained by intersecting $K_X+\De$ with curve classes in $N^1(X/Z)_{\Qq}$.

Put them together, we have rational equations
\[
\begin{split}
&{\bf c}_\lambda({x_{c_i}, x_{v_j} \mid 1 \leq i \leq p, 1 \leq j \leq w})-x_{c_\lambda}=0, ~ p<\lambda \leq l\\
&{\bf v}_\sigma({x_{c_i}, x_{v_j} \mid 1 \leq i \leq p, 1 \leq j \leq w}) - x_{c_\sigma}=0, ~ w< \sigma \leq t\\
& {\ell}_{(X,\De)/Z, \mu}({x_{c_i} \mid 1 \leq i \leq p})=0 \text{~for all~} K_X+\De \equiv 0/Z.
\end{split}
\]

Although the equations may be infinite, they cut out a rational linear subspace $V\subset \Rr^{l+t+1}$. Besides, $(c_0, c_1, \ldots, c_p, v_1, \ldots, v_w) \in V$ by definition. 

By Theorem \ref{thm: Nak Corollary}, there exists a rational polytope $Q \subset \Rr^{l+t+1}$ such that $(c_0, c_1, \ldots, c_p, v_1, \ldots, v_w) \in Q$ and for any $(c'_0, c'_1, \ldots, c'_p, v'_1, \ldots, v'_w) \in Q$, 
\[
(X, \sum_{i=1}^l c_i' \Upxi_i + \sum_{j=1}^t v_j'\Theta_j)
\] is lc, where $\bar D = \sum_{j=1}^t v_j \Theta_j$ with $\Theta_j$ a $\Qq$-Cartier Weil divisor for each $j$.

Because $(c_0, c_1, \ldots, c_p, v_1, \ldots, v_w) \in V \cap Q$, $V \cap Q$ is a non-empty rational polytope. Assume that 
\[
(c_0^{(k)}, c_1^{(k)}, \ldots, c_p^{(k)}, v_1^{(k)}, \ldots, v_w^{(k)}), 1 \leq k \leq q'
\] are the vertices of $V \cap Q$. Shrinking $V \cap Q$, we can assume that $c_i^{(k)} - c_i$ and $v_j^{(k)} - v_j$ belong to $(- \frac 1 3 \tau, \frac 1 3 \tau)$ for each $i,j$ and $1 \leq k \leq q'$. 

Next, we repeat the argument in the first part. Let
\[
\De_k  = \sum_{i=1}^{l} c_i^{(k)} \Upxi_i \text{~and~} \bar D_k  = \sum_{j=1}^{t} v_j^{(k)} \Theta_j,
\] then $K_X+\De_k \equiv 0/Z$ and $(X, \De_k+\bar D_k)$ is lc. The coefficients of $\De_k$ and $\bar D_k$ belong to a finite set. Moreover, we have a finite set $\Gamma$, such that 
\[
(X, \De+\bar D) = \sum_{k=1}^{q'} r_k (X, \De_k+\bar D_k), \sum_{k=1}^{q'} r_k =1
\] with $r_k \in \Gamma$. Define $D_k \coloneqq \bar D_k - (\bar D-D)$, and let $J$ be the set of coefficients of $D_k$ and $\De_k$. Therefore, $J$ is a DCC set. Besides,  $(X, \De_i + D_i)$ is lc (klt when $(X, \De+D)$ is klt) with $\Supp \De_k= \Supp \De \text{~and~} \Supp D_k = \Supp D$. 
\end{proof}

\begin{remark}
It is possible to use decomposition theorems to study the boundedness problem and effective Iitaka fibration problem in real coefficients. For example, \cite[Theorem 1.3]{HX15} can be shown in this setting. However, there are some technical difficulties to generalize  \cite[Theorem 1.4]{HX15} to real coefficients. To avoid deviating the topic, we omit the discussions on the details.
\end{remark}

\section{Effective adjunction v.s. $\Gamma$-effective adjunction}\label{sec: relation between conjectures}

\begin{theorem}\label{thm: Q-effective adjunction implies weak effective Gamma-base point freeness}
Conjecture \ref{conj: effective adjunction} implies Conjecture \ref{conj: Gamma-effective adjunction} (2) when $(X, D)$ is klt. 
\end{theorem}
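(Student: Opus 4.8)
The plan is to reduce the weak $\Gamma$-effective adjunction for a klt pair $(X,D)$ with $D\in I$ (a DCC set) to the rational-coefficient effective adjunction of Conjecture~\ref{conj: effective adjunction}, using the decomposition Theorem~\ref{thm: decomposition theorem: Gamma finite} to replace $D$ by a convex combination of pairs with coefficients in better-controlled sets. First I would separate $D = D^h + D^v$ into its horizontal and vertical parts over $Z$. By the ACC/DCC machinery (and since $I$ is DCC), the horizontal coefficients already lie in a finite set $I_h$ depending only on $d$ and $I$; Proposition~\ref{prop: horizontal is enough} tells us the moduli b-divisor is insensitive to the vertical part in a suitable sense, but here I cannot simply discard $D^v$ because I still need to produce $\tilde{\bf D}_{\di}$ with traces in a DCC set $J$ and $(Z,\tilde{\bf D}_{\di})$ klt, together with the compatibility~(d). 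So I would keep $D^v$ and instead handle coefficients via decomposition.

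Next I would apply Theorem~\ref{thm: decomposition theorem: Gamma finite} with $\De = D^h$ (finite coefficient set $I'=I_h$) and "$D$" there taken to be $D^v$ (DCC set $I$): after possibly a $\Qq$-factorial birational modification of $X$ (which does not affect the moduli b-divisor, by~\eqref{eq: b-boundary property}), we get a finite set $\Gamma\subset(0,1]$, a DCC set $J\subset(0,1]$, a finite set $J'\subset(0,1]\cap\Qq$, and a decomposition
\[
(X, D) = \sum_{i=1}^q r_i\,(X, \De_i + D_i^v), \qquad \sum_{i=1}^q r_i = 1,
\]
with $r_i\in\Gamma$, $\De_i\in J'$, $D_i^v\in J$, each $(X,\De_i+D_i^v)$ klt by part~(i), and $q$ bounded by $\#\Gamma$-type data. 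Crucially, since $K_X+D\sim_{\Rr} f^*L$ and each summand pair is klt, I would want each $(X,\De_i+D_i^v)$ to itself be klt-trivial over $Z$; here part~(ii) of Theorem~\ref{thm: decomposition theorem: Gamma finite} is the relevant tool, applied to the relative numerical triviality $K_X+D\equiv 0/Z$ — but one must be careful that (ii) as stated is about $K_X+\De\equiv 0/Z$, so I would arrange the decomposition so that $K_X + \De_i + D_i^v \equiv 0/Z$, i.e. treat the whole boundary, and then invoke Theorem~\ref{thm: relatively abundance for numerically trivial klt pair} (relative abundance for klt) to upgrade $\equiv 0/Z$ to $\sim_{Z,\Qq}0$, making each $f:(X,\De_i+D_i^v)\to Z$ a genuine klt-trivial fibration with rational relative class.

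Now for each $i$, Conjecture~\ref{conj: effective adjunction} applies: there is $m_i\in\Nn$ depending only on $d$ and the (finite, rational) horizontal multiplicities of $\De_i$ — which range over a finite set determined by $d$ and $I_h$ — so one can take a single $m$ uniform in $i$ (and in $(X,D)$), with $m\,{\bf D}_{\mo}^{(i)}$ base-point free. The additivity of the canonical bundle formula in the boundary (from~\eqref{eq: def of div and mod} and~\eqref{eq: b-boundary property}, which hold for $\Rr$-divisors by \cite[Lemma 2.6]{Amb04}) gives ${\bf D}_{\mo} = \sum_i r_i\,{\bf D}_{\mo}^{(i)}$ and ${\bf D}_{\di} = \sum_i r_i\,{\bf D}_{\di}^{(i)}$ for the original fibration; so I set $\tilde{\bf D}_{\mo} := {\bf D}_{\mo} = \sum_i r_i\,{\bf D}_{\mo}^{(i)}$, which is $\Gamma$-base-point free after scaling by $m$ (Definition~\ref{def: Gamma-bpf}), and $\tilde{\bf D}_{\di} := {\bf D}_{\di}$. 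Inversion of adjunction (\cite[Theorem 3.1]{Amb04}) gives $(Z,\tilde{\bf D}_{\di})$ klt, its traces lie in a DCC set $J$ depending only on $d$ and $I$ (by Theorem~\ref{thm: ACC of LCT} applied to the DCC coefficient set, as noted at the end of Section~\ref{subsec: canonical bundle formula}), and~(d) is exactly~\eqref{eq: b-boundary property}.

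The main obstacle I expect is the bookkeeping needed to make every output set ($\Gamma$, $J$, $m$) depend \emph{only} on $d$ and $I$ and not on the individual fibration: this requires that the decomposition Theorem~\ref{thm: decomposition theorem: Gamma finite} be applied with $d$ and $I_h$ (hence the possible horizontal multiplicity sets after decomposition) fixed in advance, and that the finitely many rational horizontal-multiplicity vectors arising across all summands of all $(X,D)\in\Ss$ form a bounded set — so that Conjecture~\ref{conj: effective adjunction}'s $m$ can be taken uniform. A secondary subtlety is the birational modification of $X$: one should check that replacing $(X,D)$ by a crepant $\Qq$-factorial model $(X',D')$ with $f':X'\to Z$ leaves ${\bf D}_{\mo}$ unchanged and only enlarges the place set harmlessly, which follows from~\eqref{eq: b-boundary property} and the birational invariance of the moduli b-divisor. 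Modulo these uniformity checks, the argument is a direct assembly of the cited results.
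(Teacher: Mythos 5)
The proposal has a genuine gap, and it centers on a claim that the paper explicitly warns is false. You write that ``the additivity of the canonical bundle formula in the boundary\dots gives ${\bf D}_{\mo} = \sum_i r_i\,{\bf D}_{\mo}^{(i)}$ and ${\bf D}_{\di} = \sum_i r_i\,{\bf D}_{\di}^{(i)}$.'' This is false in general: the discriminant at a prime $P$ is $1-\lct(\eta_P; X, B)$ and the log canonical threshold is only \emph{superadditive} (concave) under convex combinations, so one gets $D_{\di,Z} \leq \sum r_i D^{(i)}_{\di,Z}$ and hence $D_{\mo,Z} \geq \sum r_i D^{(i)}_{\mo,Z}$, with strict inequality possible. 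The paper states this explicitly in a remark immediately after this theorem. This is exactly why Conjecture \ref{conj: Gamma-effective adjunction} (2) allows $\tilde{\bf D}_{\mo}$ and $\tilde{\bf D}_{\di}$ to \emph{differ} from the actual moduli and discriminant b-divisors: one must take $\tilde{\bf D}_{\mo} := \sum r_i {\bf D}_{\mo}^{(i)}$ and accept that this is not ${\bf D}_{\mo}$. If you make this correction, (d) survives, but your identification $\tilde{\bf D}_{\di} = {\bf D}_{\di}$ also fails and you must use $\tilde{\bf D}_{\di} := \sum r_i {\bf D}_{\di}^{(i)}$ (which is still klt as a convex combination).

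Even with that fixed, a second gap remains: for the plan to run you need each summand $(X, \De_i + D^v_i) \to Z$ to be a genuine klt-trivial fibration, which requires $K_X + \De_i + D^v_i \equiv 0/Z$. You acknowledge that Theorem \ref{thm: decomposition theorem: Gamma finite}\,(ii) only controls $K_X + \De_i \equiv 0/Z$ and propose to ``treat the whole boundary,'' but that is not available: part (ii) works by cutting the rational polytope of Theorem \ref{thm: Nak Corollary} with the linear span constraint $K_X+\De\equiv 0/Z$, and the argument hinges on first replacing the DCC-coefficient part $D$ by $\bar D$ with coefficients in a \emph{finite} set via Theorem \ref{thm: DCC to finite} --- a replacement that does \emph{not} preserve $\equiv 0/Z$. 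So you cannot impose relative numerical triviality on $\De_i + D^v_i$ within the decomposition machinery as stated. The paper circumvents this precisely by its MMP reduction that your proposal skips: running a $(K_X+D^v)$-MMP$/Z$ to a Mori fiber space $p: Y\to W$ with $\rho(Y/W)=1$, where induction on $\dim(X/Z)$ handles the case $\dim(Y/W) < \dim(X/Z)$ (Step 3), and in the residual case $\dim(Y/W)=\dim(X/Z)$ the relative Picard-number-one structure forces $D_i^v \equiv 0/W$ for free (a generic fiber curve avoids $D_i^v$, so $D_i^v\cdot\ell=0$). Combined with Theorem \ref{thm: relatively abundance for numerically trivial klt pair} and Lemma \ref{le: rational trivial is pullback}, this pushes $D_i^v$ down to $W$, making the moduli b-divisors for $(Y,D^h_i)\to W$ and $(Y,D^h_i+D^v_i)\to W$ coincide, so that Conjecture \ref{conj: effective adjunction} applies to the honestly rational pair $(Y,D^h_i)$. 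Without the MMP/induction scaffold, your argument cannot reach the point where Conjecture \ref{conj: effective adjunction} becomes applicable.
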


\begin{proof}[Proof of Theorem \ref{thm: Q-effective adjunction implies weak effective Gamma-base point freeness}]
We use the assumptions and notation  in Conjecture \ref{conj: Gamma-effective adjunction} (2). Taking a $\Qq$-factorial dlt modification in $(X, D)$ (for example, see \cite[Corollary 1.4.3]{BCHM10}), we can assume that $X$ is a $\Qq$-factorial variety. For a general fiber $F$ of $f$, $K_F+D^h|_F=(K_X+D)|_F \equiv 0$. By Theorem \ref{thm: ACC of num trivial}, there exists a finite set $I_h$ such that $D^h \in I_h$.  

\medskip

 \noindent Step 1. Suppose that $D^h=0$. We claim that there exists $B \in \Qq$ such that $B^h=0$, $K_X+B \sim_{Z, \Qq} 0$ and the moduli b-divisors for klt-trivial fibrations $(X, D) \to Z$ and $(X, B) \to Z$ coincide.

Let $D = \sum_{1 \leq i \leq k} d_i D_i$, then \begin{equation}\label{eq: lc is rational poly}
\mathcal V \coloneqq \{(x_1, \ldots, x_k) \in \Rr_{\geq 0}^k \mid (X,  \sum_{i=1}^k x_i D_i) \text{~is lc}\}
\end{equation} is a rational polytope. Let $[\ell_1], \ldots, [\ell_l] \in N_1(X/Z/)_{\Qq}$ be a basis, then
\[
\mathcal W \coloneqq \{(x_1, \ldots,x_k)  \in \Rr^k \mid \sum_{i=1}^k x_i (D_i\cdot \ell_{j}) = -K_{X}\cdot \ell_j \text{~for all~} \ell_j\}
\] is a rational affine subspace. By $(d_1, \ldots, d_k) \in \mathcal V \cap \mathcal W$, we see that $\mathcal V \cap \mathcal W$ is a rational polytope. By $(X, D)$ klt, there exists a klt pair $(X, B)$ such that $B \in \Qq$, $\Supp B = \Supp D$ and $K_X+B \equiv 0/Z$. By Theorem \ref{thm: relatively abundance for numerically trivial klt pair}, $K_X+B \sim_{Z, \Qq} 0$, and thus $D-B \sim_{Z, \Rr} 0$. By Lemma \ref{le: rational trivial is pullback}, there exists an $\Rr$-Cartier divisor $P$ on $Z$ such that $D-B =f^*P$. Now 
\[
K_{X}+D= K_{X}+B+f^*P.
\] 

Let ${\bf D}_{\di}^D$ (resp. ${\bf D}_{\di}^B$) and ${\bf D}_{\mo}^D$ (resp. ${\bf D}_{\mo}^B$)  be the discriminant b-divisor and the moduli b-divisor for the klt-trivial fibration $(X, D) \to Z$ (resp. $(X, B) \to Z$). As Claim (1) in the proof of Proposition \ref{prop: horizontal is enough}, we have 
\begin{equation}\label{eq: mod same}
{\bf D}_{\di}^D = {\bf D}_{\di}^B+\bar P, \text{~hence~}{\bf D}_{\mo}^B={\bf D}_{\mo}^D. 
\end{equation}

By $B \in \Qq, B^h=0$ and \eqref{eq: mod same}, applying Conjecture \ref{conj: effective adjunction} to $(X, B)$, we see that ${\bf D}_{\mo}^D$ is effectively base-point free. The rest of claims in Conjecture \ref{conj: Gamma-effective adjunction} (2) follow from the property of the canonical bundle formula (see Section \ref{subsec: canonical bundle formula}).

\medskip

 \noindent Step 2. Now, assuming that $D^h \neq 0$, we will prove the claim by induction on $\dim(X/ Z)$. Because $K_{X}+D^v\equiv -D^h/Z$ is not pseudo-effective over $Z$, we can run a $(K_{X}+D^v)$-MMP$/Z$ which terminates to a Mori fiber space $p: Y \to W$ over $Z$ (see \cite[Corollary 1.3.3]{BCHM10}). Let $q: W \to Z$ be the corresponding morphism and $w=q \circ p$. Let $D_Y$ be the strict transform of $D$ on $Y$. It is enough to show Conjecture \ref{conj: Gamma-effective adjunction} (2) for the klt-trivial fibration $w: (Y, D_Y) \to Z$. In fact, only Conjecture \ref{conj: Gamma-effective adjunction} (2)(d) needs to be justified. By $K_X+D \sim_{Z, \Rr} 0$, for a common resolution $X \xleftarrow{s} T \xrightarrow{t} Y$, we have 
 \begin{equation}\label{eq: num pullback}
 s^*(K_X+D) = t^*(K_Y+D_Y).
 \end{equation} Suppose that $\theta: Z' \to Z$, $h: X' \to X$ and $f': X' \to Z'$ with $f \circ h = \theta \circ f'$.  Assume that $h': X'' \to X'$ is a birational morphism. Then $h'^* f'^* L \sim_\Rr h'^* \Theta$ implies that $f'^* L \sim_\Rr  \Theta$. Replacing $X'$ by $X''$, we can assume that $f'$ factors through $Y'$ where $g: Y' \to Y$ is a birational morphism and $\nu: Y' \to Z'$ is a model of $w: Y \to Z$. Then Conjecture \ref{conj: Gamma-effective adjunction} (2)(d) follows by taking $T=X'$ in \eqref{eq: num pullback}.
 
 \medskip
 
  \noindent Step 3. When $\dim(Y/ W)<\dim(X/ Z)$, by the induction hypothesis (notice that the coefficients of horizontal divisors over $W$ is contained in the finite set $I^h$), there exist a finite set $\Gamma_1\subset (0,1]$, a DCC set $J'_1 \subset (0,1]$ and $m_1 \in \Nn$ depending only on $d$ and $I$, such that 
\begin{equation}\label{eq: Y/W}
{K_Y+D_Y}\sim_{\Rr}p^*({K_W}+(\ti{\bf D}_{\di})_W+(\ti{\bf D}_{\mo})_W),
\end{equation} where $\ti {\bf D}_{\di}$ and $\ti {\bf D}_{\mo}$ are b-divisors. Moreover, $(W, \ti{\bf D}_{\di})$ is klt with $(\ti{\bf D}_{\di})_W \in J'_1$ and $m_1 \ti{\bf D}_{\mo}$ is $\Gamma_1$-base-point free. We claim that there is a finite set $J_1''$ depending only on $m_1$ and $\Gamma_1$ (which in turn depending only on $d$ and $I$ ), and an effective divisor $\tilde D_{W} \sim_{\Rr} (\ti {\bf D}_{\mo})_W$ such that 
\begin{enumerate}
\item $\tilde D_{W} \in J_1''$,
\item $\Supp \tilde D_{W}$ and $\Supp (\ti{\bf D}_{\di})_W$ do not have common components,
\item $(W, (\ti{\bf D}_{\di})_W+\tilde D_{W})$ is klt.
\end{enumerate} In fact, by the definition of $\Gamma$-base-point freeness, there exists a birational morphism $\pi: W' \to W$ such that $$m_1 \ti{\bf D}_{\mo} = \sum_{1 \leq i \leq q'}  r_i {\bf M}_{i}$$ with $( {\bf M}_{i})_{W'}$ base-point free and $ {\bf M}_{i} = \overline{({\bf M}_{i})}_{W'}$. Replacing $m_1$ by $2m_1$, we can assume that $m_1 \geq 2$. Taking a general element $\ti M_{W',i} \in |({\bf M}_{i})_{W'}|$, define
\[
\ti D_W = \frac{1}{m_1}\pi_*(\sum_{1 \leq i \leq q'} r_i \ti M_{W',i})
\] and $J_1''=\{ \frac{1}{m_1} r_i \mid 1 \leq i \leq q'\}$. As $(W, \ti{\bf D}_{\di})$ is klt, $$(W', (\ti{\bf D}_{\di})_{W'}+\frac{1}{m_1}(\sum_{1 \leq i \leq q'} r_i \ti M_{W',i}))$$ is sub-klt, hence $(W, (\ti{\bf D}_{\di})_W+\tilde D_{W})$ is klt. In fact, by Conjecture \ref{conj: Gamma-effective adjunction} (2) (d), if $g: Y' \to Y$ is a birational morphism and $p': Y' \to W'$ is a morphism with $p \circ g = \pi \circ p'$, then
\begin{equation}\label{eq: pullbacks}
\begin{split}
g^*(K_Y+D_Y) &\sim_\Rr p'^*(K_{W'}+(\ti{\bf D}_{\di})_{W'}+(\ti{\bf D}_{\mo})_{W'})\\
& \sim_\Rr p'^*\pi^*(K_{W}+(\ti{\bf D}_{\di})_{W}+(\ti{\bf D}_{\mo})_{W}).
\end{split}
\end{equation} Hence 
\begin{equation}\label{eq: W'/W}
\pi^*(K_{W}+(\ti{\bf D}_{\di})_{W}+(\ti{\bf D}_{\mo})_{W}) = K_{W'}+(\ti{\bf D}_{\di})_{W'}+(\ti{\bf D}_{\mo})_{W'}.
\end{equation} Therefore,
\[
\big(W, (\ti{\bf D}_{\di})_W+\tilde D_{W}\big)=\big(W, \pi_*((\ti{\bf D}_{\di})_{W'}+\frac{1}{m_1}(\sum_{1 \leq i \leq q'} r_i \ti M_{W',i}))\big)
\] is klt. In particular, the coefficients of $(\ti{\bf D}_{\di})_W+\tilde D_{W}$ belong to a DCC set $J_1\coloneqq J'_1 \cup J''_1$. We can apply the induction hypothesis to the klt-trivial fibration $$q: (W, (\ti{\bf D}_{\di})_W+\tilde D_{W}) \to Z.$$ Notice that the coefficients of horizontal divisors over $Z$ belongs to a finite set depending only on $\dim(W/Z)$ and $J_1$, and thus depending only on $d$ and $I$. We have a DCC set $J$, a finite set $\Gamma \subset (0,1]$ and $m\in\Nn$, such that 
\[
K_W+(\ti{\bf D}_{\di})_W+\tilde D_{W}\sim_{\Rr} q^*({K_Z}+({\bf D}'_{\di})_Z+({\bf D}'_{\mo})_Z),
\] where ${\bf D}'_{\di}$ and ${\bf D}'_{\mo}$ are b-divisors such that $(\ti{\bf D}_{\di})_Z \in J$, $(Z, {\bf D}'_{\di})$ is klt and $m{\bf D}'_{\mo}$ is $\Gamma$-base-point free. To see (d) in Conjecture \ref{conj: Gamma-effective adjunction} (2), suppose that $\theta: Z' \to Z$, $g: Y' \to Y$ and $\nu: Y' \to Z'$ with $q \circ p\circ g = \theta \circ \nu$.  As in Step 2, it is enough to assume that $\nu$ factors through $W'$ with $p': Y' \to W'$ and $q': W' \to Z'$. By the construction of $\ti{\bf D}_{\di}$ and $\ti{\bf D}_{\mo}$, we have
\[
g^*(K_Y+D_Y)\sim_\Rr p'^*(K_{W'}+(\ti{\bf D}_{\di})_{W'}+(\ti{\bf D}_{\di})_{W'}). 
\] By the construction of ${\bf D}'_{\di}$ and ${\bf D}'_{\mo}$, we have
\begin{equation}\label{eq: W/Z'}
\pi^*(K_W+(\ti{\bf D}_{\di})_W+\tilde D_{W})\sim_{\Rr} q'^*({K_{Z'}}+({\bf D}'_{\di})_{Z'}+({\bf D}'_{\mo})_{Z'}),
\end{equation} where $\pi: W' \to W$. Pulling back \eqref{eq: W/Z'} through $p'$ and by \eqref{eq: pullbacks}, \eqref{eq: W'/W}, Conjecture \ref{conj: Gamma-effective adjunction} (2)(d) follows. 

Notice that $J, \Gamma, m$ depending only on $d$ and $I$. Hence $J, \Gamma, m$ and $(Z, {\bf D}'_{\di}+{\bf D}'_{\mo})$ satisfy the claim in Conjecture \ref{conj: Gamma-effective adjunction} (2).

 \medskip

 \noindent Step 4. When $\dim(Y/ W)=\dim(X/ Z)$, then $W \to Z$ is a birational morphism and thus it suffices to prove the claim for $p: Y \to W$. By Theorem \ref{thm: decomposition theorem: Gamma finite}, there is a finite set $\Gamma \subset (0,1]$, a finite rational set $J'$, and a DCC set $J$ such that
\[
(Y, D^h+ D^v) = \sum_{1 \leq i \leq q} r_i (Y, D^h_i+D^v_i)
\] with 
\begin{enumerate}
\item $(Y, D^h_i+ D^v_i)$ klt, $D_i^h \in J'$, $D^v_i \in J$,
\item $r_i \in \Gamma, \sum_{1 \leq i \leq q} r_i =1$, 
\item $\Supp D^h=\Supp D^h_i, \Supp D^v=\Supp D^v_i$. 
\item $K_Y+D_i^h \equiv 0/W$
\end{enumerate}

By $\rho(Y/W)=1$, we have $D_i^v \equiv 0/Z$. By Theorem \ref{thm: relatively abundance for numerically trivial klt pair} and a similar argument as in Step 1,
\[
K_Y+D_i^h \sim_{Z, \Qq} 0, ~ K_Y+D_i^h+D_i^v \sim_{Z, \Rr} 0.
\] Thus $D_i^v \sim_{Z, \Rr} 0$ and $p^*B_i = D_i^v$ for some $\Rr$-Cartier divisor $B_i$ on $W$ by Lemma \ref{le: rational trivial is pullback}. 

Let $D_i = D_i^h+D_i^v$. Suppose that ${\bf D}_{i,\di}$ (resp. ${\bf D}_{i,\di}^h$) and ${\bf D}_{i, \mo}$ (resp. ${\bf D}^h_{i, \mo}$) are the discriminant b-divisor and the moduli b-divisor for the klt-trivial fibration $(Y, D_i) \to W$ (resp. $(Y, D^h_i) \to W$) respectively. As Claim (1) in the proof of Proposition \ref{prop: horizontal is enough}, we have
\[
{\bf D}_{i,\di} = {\bf D}_{i,\di}^h+ \bar B_i, \text{~hence~}{\bf D}_{i, \mo}={\bf D}^h_{i, \mo}.
\]

Now applying Conjecture \ref{conj: effective adjunction} to $(Y, D_i^h) \to W$, there exists $m \in \Nn$ such that $m{\bf D}^h_{i, \mo}$ is base-point free. By the canonical bundle formula, for any birational morphisms $g: Y' \to Y$, $\pi: W' \to W$ and $p: Y \to W$, $p': Y' \to W'$ such that $p \circ g = \pi \circ p'$, we have
\[
g^*(K_Y+D_i^h) \sim_\Rr p'^*(K_{W'}+({\bf D}^h_{i,\di})_{W'}+({\bf D}^h_{i,\mo})_{W'}).
\] Thus
\[
\begin{split}
g^*(K_Y+D_i)& \sim_\Rr p'^*(K_{W'}+({\bf D}^h_{i,\di})_{W'}+\pi^*B_i+({\bf D}^h_{i,\mo})_{W'})\\
& \sim_\Rr p'^*(K_{W'}+({\bf D}_{i,\di})_{W'}+({\bf D}_{i,\mo})_{W'}).
\end{split}
\] Moreover, $(W, {\bf D}_{i,\di})$ is klt and as $D_i \in J \cup J'$ which is a DCC set, coefficients of $({\bf D}_{i,\di})_W$ still belong to a DCC set depending only on $\dim Y$ and $J \cup J'$ which in turn depending only on $d$ and $I$. Finally, by
\[
K_Y+ D = \sum_{1 \leq i \leq q} r_i (K_Y+ D_i),
\] we can take $\ti{\bf D}_{\di} \coloneqq \sum_{1 \leq i \leq q} r_i{\bf D}_{i,\di}$ and $\ti{\bf D}_{\mo} \coloneqq \sum_{1 \leq i \leq q} r_i{\bf D}_{i,\mo}$. Then $m\ti{\bf D}_{\mo}$ is $\Gamma$-base-point free, $(X, \ti{\bf D}_{\di})$ is still klt and as there are only finite possibilities for $r_i$, $(\ti{\bf D}_{\di})_W$ belongs to a DCC set depending only on $d$ and $I$. This completes the proof.
\end{proof}

\begin{remark}
The technical assumption on $(X, D)$ klt is needed to apply Theorem \ref{thm: relatively abundance for numerically trivial klt pair}. Notice that in the absolute setting, such result has already been known for numerically trivial lc pairs.
\end{remark}

\begin{remark}
When $(X, D) = \sum r_i(X, D_i)$ with $(X, D) \to Z$ and $(X, D_i) \to Z$ lc-trivial fibrations, it is not true that the moduli b-divisors satisfy ${\bf D}_{\mo} = \sum r_i {\bf D}_{i, \mo} $.
\end{remark}

According to Remark \ref{rmk: fiber by curves}, Theorem \ref{thm: Q-effective adjunction implies weak effective Gamma-base point freeness} implies the following corollary.

\begin{corollary}
Conjecture \ref{conj: Gamma-effective adjunction} (2) holds true for klt pairs when general fibers are curves.
\end{corollary}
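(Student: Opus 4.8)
The plan is to feed Remark \ref{rmk: fiber by curves} into Theorem \ref{thm: Q-effective adjunction implies weak effective Gamma-base point freeness}. The only subtlety is that the proof of that theorem invokes Conjecture \ref{conj: effective adjunction} not merely for the given fibration but also for auxiliary fibrations produced along the way, so I would verify that all of these are again fibered by curves; then Remark \ref{rmk: fiber by curves} supplies the needed instances of Conjecture \ref{conj: effective adjunction} unconditionally.

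Concretely, let $f: (X,D)\to Z$ be a klt-trivial fibration with $(X,D)$ klt and general fiber of $f$ a curve, so $\dim(X/Z)=1$. Running through the proof of Theorem \ref{thm: Q-effective adjunction implies weak effective Gamma-base point freeness}: the initial $\Qq$-factorial dlt modification and the application of Theorem \ref{thm: ACC of num trivial} do not change the relative dimension. If $D^h=0$ we are in Step 1, which replaces $f$ by a klt-trivial fibration $(X,B)\to Z$ with $B\in\Qq$, $B^h=0$ and the same moduli b-divisor; its general fiber is still a curve, so Conjecture \ref{conj: effective adjunction} holds for it by Remark \ref{rmk: fiber by curves}, and the remaining assertions of Conjecture \ref{conj: Gamma-effective adjunction} (2) follow from the canonical bundle formula as in Step 1.

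If $D^h\neq 0$, the $(K_X+D^v)$-MMP over $Z$ terminates at a Mori fiber space $p: Y\to W$ over $Z$; since $Y$ is birational to $X$ and dominates $Z$ we have $\dim(Y/Z)=\dim(X/Z)=1$, and $\dim(Y/W)\geq 1$ forces $\dim(Y/W)=1$ and $\dim(W/Z)=0$, so $W\to Z$ is birational. Hence the recursive Step 3 (which would need effective adjunction in relative dimension $<1$) never occurs, and we land in Step 4: Theorem \ref{thm: decomposition theorem: Gamma finite} together with Theorem \ref{thm: relatively abundance for numerically trivial klt pair} reduces the statement to the klt-trivial fibrations $(Y,D_i^h)\to W$, again fibered by curves, so Conjecture \ref{conj: effective adjunction} applies to each of them by Remark \ref{rmk: fiber by curves}. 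Reassembling the $\Gamma$-base-point-free moduli part and the klt discriminant part as in Step 4 yields Conjecture \ref{conj: Gamma-effective adjunction} (2) for $f$.

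I expect no genuine obstacle here: the whole content is the bookkeeping that the relative dimension stays equal to $1$ throughout, which collapses the recursion in the proof of Theorem \ref{thm: Q-effective adjunction implies weak effective Gamma-base point freeness} after one step and reduces everything to the classical curve case of effective adjunction recorded in Remark \ref{rmk: fiber by curves}.
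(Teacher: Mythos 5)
Your proposal is correct and follows the same route as the paper: combine Remark \ref{rmk: fiber by curves} with Theorem \ref{thm: Q-effective adjunction implies weak effective Gamma-base point freeness}. The paper states this in one line without the verification you carry out, so it is worth noting that your bookkeeping is not superfluous: as literally stated, Theorem \ref{thm: Q-effective adjunction implies weak effective Gamma-base point freeness} takes the full Conjecture \ref{conj: effective adjunction} as a hypothesis, and one does need to trace the proof to see that when $\dim(X/Z)=1$ the only instances invoked (in Steps~1 and~4, via the auxiliary fibrations $(X,B)\to Z$ and $(Y,D_i^h)\to W$) are again relative dimension one, and that the recursive Step~3 cannot fire because a Mori fiber space $Y\to W$ forces $\dim(Y/W)\geq 1$ hence $W\to Z$ birational. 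So your argument is a correct and more explicit justification of exactly the implication the paper asserts.
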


By a similar argument as above, we show that Conjecture \ref{conj: Gamma-effective adjunction} (2) can be put in the framework of the minimal model program (MMP) and the abundance conjecture. 

\begin{proposition}\label{prop: CY and Picard 1 is enough}
Assuming the MMP and the abundance conjecture for klt pairs in relative dimensions $ \leq \dim(X/Z)$. To show Conjecture \ref{conj: Gamma-effective adjunction} (2) for klt pairs, it is enough to show the following two cases:
\begin{enumerate} 
\item $K_X \sim_{Z, \Qq} 0$, 
\item $\rho(X/Z)=1$.
\end{enumerate}
\end{proposition}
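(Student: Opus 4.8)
The plan is to follow the proof of Theorem~\ref{thm: Q-effective adjunction implies weak effective Gamma-base point freeness}, replacing the two appeals to Conjecture~\ref{conj: effective adjunction} there (applied to fibrations with $K_X+B\sim_{Z,\Qq}0$, and to fibrations of relative Picard number one) by appeals to the two assumed cases, and using the assumed MMP and abundance to run, now unconditionally, all the minimal model programs occurring in that argument. I will induct on the relative dimension $n=\dim(X/Z)$, the base case $n=0$ being immediate from the canonical bundle formula for a birational morphism together with Theorem~\ref{thm: ACC of LCT} and inversion of adjunction. After replacing $(X,D)$ by a $\Qq$-factorial dlt modification we may assume $X$ is $\Qq$-factorial, which changes neither the moduli b-divisor nor the coefficient set of $D$.

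A uniform remark: since $K_X+D\sim_{Z,\Rr}0$, for every projective contraction $\phi\colon X\to T/Z$ over $Z$ we have $K_X+D\sim_{T,\Rr}0$ by pulling back the relation from $Z$, so every extremal contraction and flip performed below is $(K_X+D)$-crepant; in particular it preserves the klt property of $(X,D)$, keeps the boundary coefficients inside the original DCC set $I$, preserves $K+D\sim_{Z,\Rr}0$, and preserves the moduli b-divisor. Suppose first that $K_X$ is pseudo-effective over $Z$. By the assumed MMP and abundance, $(X,0)$ has a good minimal model over $Z$; running the corresponding $(K_X+D)$-crepant program we reach $(X',D')\to Z$ with $K_{X'}$ semiample over $Z$, and the associated contraction $g\colon X'\to V$ over $Z$ satisfies $K_{X'}\sim_{V,\Qq}0$, so $(X',D')\to V$ is a klt-trivial fibration falling under case~(1). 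If $\dim V=\dim Z$ --- which always happens when $K_X\equiv0/Z$, in which case in fact $V=Z$ and Theorem~\ref{thm: relatively abundance for numerically trivial klt pair} already puts us in case~(1) --- then $V\to Z$ is birational and Conjecture~\ref{conj: Gamma-effective adjunction}(2) for $(X,D)\to Z$ follows from case~(1) for $(X',D')\to V$ via the canonical bundle formula for $V\to Z$. Otherwise $\dim(V/Z)<n$ (the value $\dim(V/Z)=n$ is impossible, as then $K_{X'}$ would be big over $Z$ while $K_{X'}\equiv -D'/Z$ with $D'\ge0$ forces $K_{X'}\equiv0/Z$), and we finish as in Step~3 of the proof of Theorem~\ref{thm: Q-effective adjunction implies weak effective Gamma-base point freeness}: Conjecture~\ref{conj: Gamma-effective adjunction}(2) holds for $(X',D')\to V$ by case~(1), and replacing the resulting moduli b-divisor by a general effective member produces a klt-trivial fibration $\big(V,(\tilde{\bf D}_{\di})_V+\tilde D_V\big)\to Z$ of relative dimension $\dim(V/Z)<n$ with boundary coefficients in a DCC set depending only on $d$ and $I$, to which the induction hypothesis applies.

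If instead $K_X$ is not pseudo-effective over $Z$, then $D\neq0$, and for any fixed $\ep\in(0,1)$ the divisor $K_X+(1-\ep)D\equiv\ep K_X/Z$ is not pseudo-effective over $Z$ either, while $(X,(1-\ep)D)$ remains klt. By the assumed MMP, a $(K_X+(1-\ep)D)$-MMP over $Z$ terminates in a Mori fiber space $p\colon Y\to W/Z$; this program is again $(K_X+D)$-crepant, so $(Y,D_Y)\to W$ is a klt-trivial fibration with $\rho(Y/W)=1$ and with the same moduli b-divisor, viewed over $Z$ after the birational base change $W\to Z$. If $\dim(Y/W)=n$ then $W\to Z$ is birational and $(Y,D_Y)\to W$ is of case~(2); transporting the conclusion to $(X,D)\to Z$ via the canonical bundle formula for $W\to Z$ settles this branch. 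If $\dim(Y/W)<n$ then $0<\dim(W/Z)<n$, and we conclude by applying the induction hypothesis first to $(Y,D_Y)\to W$ and then, after replacing the moduli part by a general effective member, to the resulting klt-trivial fibration $\big(W,(\tilde{\bf D}_{\di})_W+\tilde D_W\big)\to Z$ of relative dimension $\dim(W/Z)<n$, exactly as in Step~3 of the proof of Theorem~\ref{thm: Q-effective adjunction implies weak effective Gamma-base point freeness}.

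The main obstacle is bookkeeping rather than a new idea: one must verify that in every branch the finite set $\Gamma$, the DCC set $J$ and the integer $m$ produced depend only on $d$ and $I$. This uses that the horizontal multiplicities over each intermediate base lie in a finite set controlled by $d$ and $I$ (Theorem~\ref{thm: ACC of num trivial}), that replacing the moduli part by an effective divisor only enlarges the coefficient set within a DCC set depending on $d$ and $I$, and that the discriminant of the birational morphisms $V\to Z$ or $W\to Z$ picked up at the end has coefficients in a DCC set by Theorem~\ref{thm: ACC of LCT}; one must also propagate the b-divisor identity of Conjecture~\ref{conj: Gamma-effective adjunction}(2)(d) through the intervening birational models, exactly as in Steps~2--3 of the proof of Theorem~\ref{thm: Q-effective adjunction implies weak effective Gamma-base point freeness}.
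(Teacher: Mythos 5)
Your argument is correct and follows the paper's strategy: run an MMP over $Z$, apply case~(1) when it terminates in a relative minimal model (so that the canonical model map gives a fibration with $K\sim_{\Qq}0$ over the new base), apply case~(2) when it terminates in a Mori fiber space, and descend through the intermediate base by replacing the moduli part with a general effective member as in Step~3 of the proof of Theorem~\ref{thm: Q-effective adjunction implies weak effective Gamma-base point freeness}. The cosmetic differences --- splitting up front on pseudo-effectivity of $K_X$ over $Z$ rather than running a single $K_X$-MMP/$Z$, using a $(K_X+(1-\ep)D)$-MMP to reach the Mori fiber space, invoking the induction hypothesis for $(Y,D_Y)\to W$ when $\dim(Y/W)<n$ instead of applying case~(2) directly (which always applies since $\rho(Y/W)=1$), and retaining a $\dim(V/Z)>0$ branch in the pseudo-effective case that is in fact vacuous because $D^h=0$ and abundance force $V\to Z$ to be birational --- do not affect the validity of the argument.
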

\begin{proof}
Run a $K_X$-MMP/$Z$. By assumption, it terminates to $Y/Z$, where either (i) $K_Y$ is semi-ample$/Z$, or (ii) there exists a Mori fiber space $Y \to W/Z$. Let $D_Y$ be the strict transform of $D$ on $Y$. By Step 2 in the proof of Theorem \ref{thm: Q-effective adjunction implies weak effective Gamma-base point freeness}, it suffices to show the claim for $(Y, D_Y) \to Z$. In Case (i), let $Y \to T/Z$ be the morphism induced be $K_Y$. 

In Case (i), because $K_X \equiv -D/Z$ is pseudo-effective over $Z$, the horizontal part  $D^h =0$. Thus $h: T \to Z$ is birational, and $K_Y \sim_{T, \Qq} 0$. By assumption, we have b-divisors $\ti {\bf D}_{\di}$ and $\ti {\bf D}_{\mo}$ for the klt-trivial fibration $(Y, D_Y) \to T$ (not for $Y \to T$) satisfying the claim in Conjecture \ref{conj: Gamma-effective adjunction} (2). $\ti {\bf D}_{\di}$ and $\ti {\bf D}_{\mo}$ can be viewed as b-divisors over $Z$, and it is straightforward to check (a), (b), (d) in Conjecture \ref{conj: Gamma-effective adjunction} (2). For (c), suppose that $\theta: Z' \to Z$ is a birational morphism from a $\Qq$-factorial variety and $Z' \xleftarrow{r} W \xrightarrow{s} T$ are birational morphisms such that $\theta \circ r = h \circ s$. Besides, we can assume that $m(\ti {\bf D}_{\di})_{W}$ is $\Gamma$-base-point free. Hence there exists $0 \leq M \sim_{\Rr} (\ti {\bf D}_{\mo})_{W}$ such that $(W, (\ti {\bf D}_{\di})_{W}+M)$ is sub-klt. By (d), we have
\[
\begin{split}
r^*(K_{Z'}+(\ti {\bf D}_{\di})_{Z'}+(\ti {\bf D}_{\mo})_{Z'}) &= s^*(K_T+(\ti {\bf D}_{\di})_{T}+(\ti {\bf D}_{\mo})_{T})\\
& \sim_\Rr K_W+(\ti {\bf D}_{\di})_{W}+M.
\end{split}
\] Thus $K_{Z'}+(\ti {\bf D}_{\di})_{Z'}+r_*M = r_*(K_W+(\ti {\bf D}_{\di})_{W}+M)$, and $({Z'}, (\ti {\bf D}_{\di})_{Z'}+r_*M)$ is sub-klt. By $r_*M \geq 0$, $({Z'}, (\ti {\bf D}_{\di})_{Z'})$ is still sub-klt. This shows Case (i).

\medskip

In Case (ii), apply the assumption to the Mori fiber space $Y \to W$. We have a DCC set $J_1'$, a finite set $\Gamma_1 \subset (0,1]$ and $m_1\in\Nn$ such that there exist b-divisors $\ti{\bf D}_{\di}$ and $\ti{\bf D}_{\mo}$ satisfy the claim in Conjecture \ref{conj: Gamma-effective adjunction} (2). As in the Step 3 in the proof of Theorem \ref{thm: Q-effective adjunction implies weak effective Gamma-base point freeness}, there exist a finite set $J_1''$ depending only on $\Gamma_1, m_1$, and an effective divisor $\ti D_W \sim_{\Rr} (\ti{\bf D}_{\mo})_W$ such that 
\begin{enumerate}
\item $\tilde D_{W} \in J_1''$,
\item $\Supp \tilde D_{W}$ and $\Supp (\ti{\bf D}_{\di})_W$ do not have common components,
\item $(W, (\ti{\bf D}_{\di})_W+\tilde D_{W})$ is klt.
\end{enumerate} The coefficients of $(\ti{\bf D}_{\di})_W+\tilde D_{W}$ belong to a DCC set which depends only on $\dim X$ and $I$. Moreover,
\[
(W, (\ti{\bf D}_{\di})_W+\tilde D_{W}) \to Z
\] is a klt-trivial fibration with $\dim(W/Z)<\dim(X/Z)$. Replacing $(X, D)$ by $(W, (\ti{\bf D}_{\di})_W+\tilde D_{W})$ and repeating the above argument, we obtain the desired result.
\end{proof}

We should point out that in the ``$K_X \sim_{Z,\Qq} 0$'' case, we need Conjecture \ref{conj: Gamma-effective adjunction} (2) for the pair $(X, D)$ instead of merely for $X$. The reason is the following. By $K_X+D\sim_{Z,\Rr} 0$, we have $D =f^*B$ for some $B$ on $Z$. Applying Conjecture \ref{conj: Gamma-effective adjunction} (2) to the klt-trivial fibration $X \to Z$, we have b-divisors $\ti {\bf D}_{\di}$ such that $(Z, \ti {\bf D}_{\di})$ is klt. But $(Z, \ti {\bf D}_{\di} + \bar B)$ is not necessarily klt. However, it is excepted that $\ti {\bf D}_{\di}$ is exactly the discriminant b-divisor for the klt-trivial fibration $X \to Z$. If this is the case, then $(Z, \ti {\bf D}_{\di} + \bar B)$ is still klt by the property of the canonical bundle formula.

\section{Applications of the $\Gamma$-effective adjunction}\label{sec: application}

Let $X$ be a projective variety. In \cite{Li20a}, we study the boundedness of log canonical models when the Iitaka volumes are fixed (or bounded above), and the distributions of the Iitaka volumes. Recall that the Iitaka volume (see \cite[Definition 1.1]{Li20a}) is defined to be 
\[
\Ivol(K_X+D) \coloneqq \limsup_{m\to\infty} \frac{\kappa(K_X+D)!h^0(X, \Oo_X(\lfloor m(K_X+D) \rfloor))}{m^{\kappa(K_X+D)}}
\] when $\kappa(K_X+D) \neq -\infty$ and $0$ otherwise. 

Notice that for a klt pair $(X, D)$ over $U$ with $D \in \Rr$ and the relative Kodaira dimension $\ka (K_X + D/U ) \geq 0$, $(X, D)$ admits a unique log canonical model over $U$ (see \cite{Jia20} and \cite[Corollary 1.2]{Li20b}). The following conjectures are \cite[Conjecture 1.2]{Li20a} and \cite[Conjecture 1.7]{Li20a} in terms of real coefficients.

\begin{conjecture}[{\cite[Conjecture 1.2]{Li20a}}]\label{conj: bounded of base, klt}
Let $d\in\Nn, v \in\Rr_{>0}$ be fixed numbers, and $I\subset (0,1]$ be a DCC set. Let $\Ss(d,v, I)$ be the set of varieties $Z$ satisfying the following properties:
\begin{enumerate}
\item $(X, D)$ is a projective klt pair with $\dim X =d, D\in I$,
\item $\Ivol(K_X+D)=v$, and
\item $f: X \dasharrow Z$ is the log canonical model of $(X, D)$.
\end{enumerate}
Then $\Ss(d,v,\Ii)$ is a bounded family.
\end{conjecture}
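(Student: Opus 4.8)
The plan is to reduce Conjecture \ref{conj: bounded of base, klt} to a boundedness statement on the base $Z$ via the canonical bundle formula, using the $\Gamma$-effective adjunction Conjecture \ref{conj: Gamma-effective adjunction} (2) as the substitute for effective base-point freeness of the moduli part. First I would set up the geometry: after passing to a log resolution and a suitable birational model of the base, replace $(X,D)$ by a model on which $K_X+D \sim_{\Rr} f^*(K_Z + \ti{\bf D}_{\di} + \ti{\bf D}_{\mo})$ with $f\colon X \to Z$ the lc-trivial fibration onto the log canonical model. Since $f$ is the log canonical model map, $K_Z + \ti{\bf D}_{\di} + \ti{\bf D}_{\mo}$ is ample on $Z$; the goal is to bound the family of such $(Z, \text{ample polarization})$.

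The key steps, in order: (1) apply Conjecture \ref{conj: Gamma-effective adjunction} (2) to the klt-trivial fibration $(X,D)\to Z$ — there is a DCC set $J$, a finite set $\Gamma\subset(0,1]$, and $m\in\Nn$ depending only on $d$ and $I$ so that the discriminant trace $(\ti{\bf D}_{\di})_Z \in J$, the pair $(Z,\ti{\bf D}_{\di})$ is klt, and $m\ti{\bf D}_{\mo}$ is $\Gamma$-base-point free, i.e. $m\ti{\bf D}_{\mo} = \sum a_i {\bf M}_i$ with $a_i\in\Gamma$ and ${\bf M}_i$ base-point free; (2) replace $\ti{\bf D}_{\mo}$ by a general effective $\Rr$-divisor $\ti D_{\mo}\sim_{\Rr}(\ti{\bf D}_{\mo})_Z$ whose coefficients lie in a finite set $J''$ depending only on $m$ and $\Gamma$, chosen so that $(Z,\ti{\bf D}_{\di}+\ti D_{\mo})$ is still klt — this is exactly the trick from Step 3 of the proof of Theorem \ref{thm: Q-effective adjunction implies weak effective Gamma-base point freeness}; (3) now $(Z,\Delta_Z)$ with $\Delta_Z := (\ti{\bf D}_{\di})_Z + \ti D_{\mo}$ is a klt pair with $\Delta_Z$ in a fixed DCC set $J\cup J''$, with $K_Z+\Delta_Z$ ample and of fixed top dimension $\ka(K_X+D)$; (4) relate $\mathrm{vol}(K_Z+\Delta_Z)$ to $\Ivol(K_X+D)=v$ — pushing the canonical bundle formula forward, these volumes agree (up to the structure of the Iitaka fibration), so $\mathrm{vol}(K_Z+\Delta_Z)$ lies in a fixed finite set, or at least is bounded above, as a function of $v$, $d$, $I$; (5) apply boundedness of klt pairs of log general type with DCC coefficients and bounded volume (the BAB-type / Hacon--McKernan--Xu boundedness results) to conclude that the $Z$ form a bounded family.

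The main obstacle I expect is step (4): controlling the Iitaka volume of $K_X+D$ in terms of data intrinsic to the base pair $(Z,\Delta_Z)$, and in particular showing it takes only finitely many values (or is bounded) once $v$ is fixed. One has to be careful that $\ti{\bf D}_{\mo}$ is replaced by a linearly equivalent effective divisor without changing the volume and that the moduli part genuinely contributes its class — the $\Gamma$-base-point freeness gives a bona fide effective representative with coefficients in a finite set, which is what makes the DCC (indeed finite) coefficient condition on $\Delta_Z$ work and hence lets one invoke the known boundedness theorem. A secondary subtlety is that the birational model of $Z$ produced by the canonical bundle formula must be reconciled with the actual log canonical model in the statement; since $K_Z+\Delta_Z$ is ample after contracting, the canonical model of $(Z,\Delta_Z)$ recovers the $Z$ we want, so boundedness of the pairs $(Z,\Delta_Z)$ up to isomorphism gives boundedness of the family $\Ss(d,v,I)$.
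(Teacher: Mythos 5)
Your approach is essentially the same as the paper's (Proposition \ref{prop: assume effective adjunction}): pass to a good minimal model so that $f:X\to Z$ is the ample model morphism, apply Conjecture \ref{conj: Gamma-effective adjunction} (2) to the resulting klt-trivial fibration, use the trick from Step 3 of the proof of Theorem \ref{thm: Q-effective adjunction implies weak effective Gamma-base point freeness} to replace $(\ti{\bf D}_{\mo})_Z$ by a general effective representative $\ti D_Z$ with coefficients in a finite set, and then invoke boundedness for klt pairs with DCC coefficients and fixed volume. Two remarks on the details you flag or gloss over.

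First, the worry you raise about step (4) is unfounded: since $f$ is the morphism induced by $K_X+D$ (which exists after replacing by a good minimal model), $K_X+D \sim_{\Rr} f^*(K_Z+(\ti{\bf D}_{\di})_Z+\ti D_Z)$ with the latter ample, and $\Ivol(K_X+D)$ equals $\vol(K_Z+(\ti{\bf D}_{\di})_Z+\ti D_Z)$ directly from the definition of Iitaka volume. It is not merely bounded above — it is exactly $v$.

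Second, the final step where you ``apply boundedness of klt pairs of log general type with DCC coefficients and bounded volume'' hides the main technical content the paper has to supply: the Hacon--McKernan--Xu boundedness theorem for log canonical models of fixed volume is stated for $\Qq$-coefficients, while here $\Delta_Z = (\ti{\bf D}_{\di})_Z+\ti D_Z$ is genuinely a real divisor (the DCC set $J$ and the finite set $J''$ need not lie in $\Qq$). The paper proves the required real-coefficient version in Theorem \ref{thm: HMX18 klt}, which takes up most of Section \ref{sec: application}, and this adaptation is not formal — it requires the invariance-of-plurigenera arguments and a reduction to a finite coefficient set. You should explicitly cite or reprove such a statement rather than treating it as an off-the-shelf tool. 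Also note that Theorem \ref{thm: HMX18 klt} is only proved in the klt case, which is why the conjecture here is stated for klt $(X,D)$ and why the $\Gamma$-effective adjunction output $(Z,\ti{\bf D}_{\di})$ klt matters.
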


\begin{conjecture}[{\cite[Conjecture 1.6]{Li20a}}]\label{conj: DCC}
Let $d\in\Nn$ be a fixed number, and $I\subset (0,1]$ be a DCC set. Then the set of Iitaka volumes
\[
\{\Ivol(K_X+D) \mid (X, D) \text{~is a projective klt pair,~} \dim X =d, D\in I\}
\] is a DCC set.
\end{conjecture}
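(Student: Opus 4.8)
We deduce Conjecture~\ref{conj: DCC} from Conjecture~\ref{conj: Gamma-effective adjunction}. The plan is to push an Iitaka volume down the Iitaka fibration and express it, via the canonical bundle formula, as the (generalised) volume of a generalised klt pair on the base whose dimension, discriminant coefficients and polarisation are controlled only by $d$ and $I$; the desired DCC property then follows from the DCC for volumes of generalised klt pairs. Let $(X,D)$ be a projective klt pair with $\dim X=d$ and $D\in I$. If $\kappa(K_X+D)=-\infty$ then $\Ivol(K_X+D)=0$; if $\kappa(K_X+D)=0$ then $\Ivol(K_X+D)=\limsup_m h^0(X,\Oo_X(\lfloor m(K_X+D)\rfloor))$ is a positive integer. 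In either case the value lies in $\Zz_{\ge 0}$, which is a DCC set, so we may assume $\kappa\coloneqq\kappa(K_X+D)\ge 1$. Since $(X,D)$ is klt with $\kappa\ge 0$ it admits a log canonical model (see \cite{Jia20}, \cite[Corollary 1.2]{Li20b}); by the standard reduction to the canonical bundle formula (see \cite{Kaw98}, \S\ref{subsec: canonical bundle formula} and \cite{Li20a}), after a birational modification of $X$ we may assume that the Iitaka fibration is a morphism $f\colon X\to Z$, that $f\colon(X,D)\to Z$ is a klt-trivial fibration in the sense of Definition~\ref{def: klt-trivial fibrations} (so $K_X+D\sim_\Rr f^*L$ with $L=K_Z+{D}_{\di,Z}+{D}_{\mo,Z}$ as in \eqref{eq: def of div and mod}), that $\dim Z=\kappa$, that $D$ has coefficients in a DCC set $I_1$ depending only on $d$ and $I$, and that $\Ivol(K_X+D)=\operatorname{vol}_Z(L)$; note $L$ is big since $\dim Z=\kappa$.

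Apply Conjecture~\ref{conj: Gamma-effective adjunction}~(2) to $f\colon(X,D)\to Z$: there are a DCC set $J\subset(0,1]$, a finite set $\Gamma\subset(0,1]$ and $m\in\Nn$, all depending only on $d$ and $I_1$ and hence only on $d$ and $I$, and b-divisors $\tilde{\bf D}_{\di},\tilde{\bf D}_{\mo}$ over $Z$ with $(\tilde{\bf D}_{\di})_Z\in J$, with $(Z,\tilde{\bf D}_{\di})$ klt, with $m\tilde{\bf D}_{\mo}$ $\Gamma$-base-point free (Definition~\ref{def: Gamma-bpf}), and such that for every compatible tower $p\colon Z'\to Z$, $q\colon X'\to X$, $f'\colon X'\to Z'$ one has $q^*(K_X+D)\sim_\Rr f'^*(K_{Z'}+(\tilde{\bf D}_{\di})_{Z'}+(\tilde{\bf D}_{\mo})_{Z'})$. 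Comparing this with $q^*(K_X+D)\sim_\Rr q^*f^*L=f'^*p^*L$ and using that $f'$ is a contraction gives $K_{Z'}+(\tilde{\bf D}_{\di})_{Z'}+(\tilde{\bf D}_{\mo})_{Z'}\sim_\Rr p^*L$ on every model $Z'\to Z$; in particular its volume equals $\operatorname{vol}_Z(L)=\Ivol(K_X+D)$, and pushing the relation forward to $Z$ yields $K_Z+(\tilde{\bf D}_{\di})_Z+(\tilde{\bf D}_{\mo})_Z\sim_\Rr L$. Put $B\coloneqq(\tilde{\bf D}_{\di})_Z\in J$ and $M\coloneqq(\tilde{\bf D}_{\mo})_Z$. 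Writing $m\tilde{\bf D}_{\mo}=\sum_{i=1}^{l}a_i{\bf M}_i$ with $\sum a_i=1$, $a_i\in\Gamma$ and ${\bf M}_i$ base-point free (hence b-nef) b-divisors, we see that $\tilde{\bf D}_{\mo}$ is b-nef and $M$ comes from a bounded family of polarisations with a uniform Cartier bound $m$ and coefficient set $\Gamma$. Thus $(Z,B+M)$ is a generalised klt pair (the klt condition on $(Z,\tilde{\bf D}_{\di})$ provides the sub-klt condition on every model), and $\Ivol(K_X+D)=\operatorname{vol}_Z(K_Z+B+M)$ is the generalised volume of a generalised klt pair with $\dim Z\le d$, boundary coefficients in the DCC set $J$, and nef part controlled by $m$ and $\Gamma$ — all depending only on $d$ and $I$.

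It remains to invoke the DCC property for volumes of generalised klt pairs of bounded dimension whose boundary coefficients lie in a fixed DCC set and whose nef part comes from a fixed bounded family of polarisations; this is the generalised-pair analogue of the DCC for volumes of \cite{HMX14}. It implies that the set of numbers $\operatorname{vol}_Z(K_Z+B+M)$ arising above is a DCC set, and combining this with the case $\kappa\le 0$ proves Conjecture~\ref{conj: DCC}. The identical scheme, with the DCC for volumes replaced by the corresponding boundedness statement (generalised klt pairs of bounded dimension, with coefficients in a DCC set and $K+B+M$ big of bounded volume, form a bounded family — again \cite{HMX14} and its generalised-pair version), yields Conjecture~\ref{conj: bounded of base, klt}, since the base $Z$ of the Iitaka fibration is precisely the log canonical model of $(X,D)$.

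The conceptual content is carried by Conjecture~\ref{conj: Gamma-effective adjunction}: the \emph{finiteness} of $\Gamma$ is what makes the moduli part, after a base change, a \emph{finite} convex combination of base-point free pieces sharing a \emph{uniform} Cartier bound $m$, while the control of the discriminant part (coefficients in a DCC set, klt base) keeps the induced pair on the base within the range of the DCC/boundedness machinery — and, crucially, this all works with real coefficients, for which the original effective adjunction conjecture is not even formulated. The one ingredient that is used \emph{unconditionally} (and not merely as another conjecture) is the DCC for volumes, and the boundedness, of generalised klt pairs with the above bounded data; I expect this to be the main technical point, as it requires carrying the methods of \cite{HMX14} over to the generalised setting in the precise form needed here (bounded dimension, DCC boundary coefficients, nef part from a bounded family), rather than the reduction itself, which is routine given the $\Gamma$-effective adjunction conjecture.
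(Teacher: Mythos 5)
Your reduction to a klt‑trivial fibration and your application of Conjecture \ref{conj: Gamma-effective adjunction}~(2) match the paper's argument for Proposition \ref{prop: assume effective adjunction} (which is where Conjecture \ref{conj: DCC} is actually treated, conditionally on $\Gamma$-effective adjunction and the existence of good minimal models). The divergence is after the conjecture is applied, and it is a real gap.

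You encode the output of Conjecture \ref{conj: Gamma-effective adjunction}~(2) as a generalised klt pair $(Z, B+M)$ and then invoke a ``DCC for volumes of generalised klt pairs whose nef part comes from a fixed bounded family of polarisations.'' That step is not justified. The standard DCC/boundedness results for generalised pairs (Birkar--Zhang, Filipazzi, etc.) require the nef part to be $\Qq$-Cartier with a fixed Cartier index; here $M=(\tilde{\bf D}_{\mo})_Z$ is a genuinely \emph{real} convex combination $\frac{1}{m}\sum_i a_i M_i$ with $a_i\in\Gamma\subset\Rr$, so $mM$ is not Cartier and the existing theorems do not apply. You acknowledge this point (``I expect this to be the main technical point\dots''), which is correct: as written, your argument rests on an unproved extension of the generalised‑pair machinery.

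What you are missing is precisely the reason $\Gamma$-base-point freeness (rather than merely b-nefness with bounded index) appears in the conjecture. Because $m\tilde{\bf D}_{\mo}=\sum a_i\,{\bf M}_i$ with each ${\bf M}_i$ base-point free on a fixed model $W'\to Z$, one can pick \emph{general} members $\tilde M_{W',i}\in |({\bf M}_i)_{W'}|$ and set $\tilde D_Z = \frac{1}{m}\pi_*\big(\sum_i a_i \tilde M_{W',i}\big)$. Then $\tilde D_Z\sim_\Rr (\tilde{\bf D}_{\mo})_Z$ is an honest effective divisor with coefficients in the finite set $\{a_i/m\}$, and $(Z,(\tilde{\bf D}_{\di})_Z+\tilde D_Z)$ is an ordinary klt pair with coefficients in a DCC set depending only on $d$ and $I$, with $K_Z+(\tilde{\bf D}_{\di})_Z+\tilde D_Z$ ample of volume $\Ivol(K_X+D)$. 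This lands squarely in the classical setting, and Conjecture \ref{conj: DCC} follows directly from \cite[Theorem 1.3(1)]{HMX14} (and Conjecture \ref{conj: bounded of base, klt} from the real-coefficient boundedness statement Theorem \ref{thm: HMX18 klt}), with no need for any generalised-pair theory. Replace your final invocation with this ``take a general member'' step (as in Step~3 of the proof of Theorem \ref{thm: Q-effective adjunction implies weak effective Gamma-base point freeness}) and the argument closes.
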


As \cite[Proposition 4.1]{Li20a}, we have the following result.

\begin{proposition}\label{prop: assume effective adjunction}
Assuming Conjecture \ref{conj: Gamma-effective adjunction} (2) and the existence of good minimal models, then Conjecture \ref{conj: bounded of base, klt} and Conjecture \ref{conj: DCC} hold true.
\end{proposition}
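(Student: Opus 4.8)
The plan is to argue as in \cite[Proposition 4.1]{Li20a}, with Conjecture \ref{conj: Gamma-effective adjunction} (2) playing the role of the effective adjunction conjecture; the extra $\Gamma$-structure costs nothing here, since all that is needed on the base is an \emph{effective} boundary whose coefficients lie in a fixed DCC set. Let $(X,D)$ be a projective klt pair with $\dim X=d$, $D\in I$ and $\Ivol(K_X+D)=v$, and let $Z$ be its log canonical model. As $v>0$ we have $\kappa(K_X+D)\ge 0$; the case $\kappa(K_X+D)=0$ is immediate, so assume $\kappa(K_X+D)\ge1$. By the existence of good minimal models, $(X,D)$ has a good minimal model $(X',D')$, and the semi-ample divisor $K_{X'}+D'$ induces a contraction $g\colon X'\to Z$ with $K_{X'}+D'\sim_{\Rr}g^{*}A$ for an ample $\Rr$-Cartier divisor $A$ on $Z$, where $\dim Z=\kappa(K_X+D)$; since $g$ is a contraction and $\kappa(K_X+D)=\dim Z$, one has $\Ivol(K_X+D)=\operatorname{vol}_Z(A)$. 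Moreover $g\colon(X',D')\to Z$ is a klt-trivial fibration in the sense of Definition \ref{def: klt-trivial fibrations}.

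First I would apply Conjecture \ref{conj: Gamma-effective adjunction} (2) to $g\colon(X',D')\to Z$ with the DCC set $I$. This produces a DCC set $J\subset(0,1]$, a finite set $\Gamma\subset(0,1]$ and an integer $m\ge2$ (enlarging $m$ to $2m$ if necessary), all depending only on $d$ and $I$, together with b-divisors $\ti{\bf D}_{\di}$ and $\ti{\bf D}_{\mo}$ such that $(\ti{\bf D}_{\di})_Z\in J$, $(Z,\ti{\bf D}_{\di})$ is klt, $m\ti{\bf D}_{\mo}$ is $\Gamma$-base-point free, and part (d) of Conjecture \ref{conj: Gamma-effective adjunction}\,(2) holds for the fibration $g$. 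Reading that identity over $Z'=Z$ and using $K_{X'}+D'\sim_{\Rr}g^{*}A$ together with the fact that $g$ is a contraction gives $K_Z+(\ti{\bf D}_{\di})_Z+(\ti{\bf D}_{\mo})_Z\sim_{\Rr}A$, so in particular this $\Rr$-divisor is ample.

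Next I would convert the moduli b-divisor into an honest effective divisor on $Z$ with controlled coefficients; this is the heart of the argument and the one place where the $\Gamma$-structure enters. Write $m\ti{\bf D}_{\mo}=\sum_i a_i{\bf M}_i$ with $a_i\in\Gamma$, $\sum_i a_i=1$ and each ${\bf M}_i$ base-point free; fix a common model $\sigma\colon W\to Z$ with $W$ a $\Qq$-factorial variety on which every $({\bf M}_i)_W$ is base-point free and ${\bf M}_i=\overline{({\bf M}_i)_W}$, choose general members $N_i\in|({\bf M}_i)_W|$, and set $M_W\coloneqq\tfrac1m\sum_i a_iN_i\ge0$. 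Then $M_W\sim_{\Rr}(\ti{\bf D}_{\mo})_W$, the coefficients of $M_W$ lie in the finite set $\{\gamma/m\mid\gamma\in\Gamma\}$, and $(W,(\ti{\bf D}_{\di})_W+M_W)$ is sub-klt by Bertini (using that $(Z,\ti{\bf D}_{\di})$ is klt and $\gamma/m<1$). Pushing forward, put $\Xi\coloneqq(\ti{\bf D}_{\di})_Z\ge0$ and $M\coloneqq\sigma_{*}M_W\ge0$: then $M\sim_{\Rr}(\ti{\bf D}_{\mo})_Z$, $K_Z+\Xi+M\sim_{\Rr}A$ is ample, and the coefficients of $\Xi+M$ lie in a DCC set $J^{*}$ depending only on $d$ and $I$ (finitely many translates of $J$ by elements of $\{\gamma/m\mid\gamma\in\Gamma\}$, intersected with $(0,1)$). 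Finally $(Z,\Xi+M)$ is klt, which I would check exactly as in Step 3 of the proof of Theorem \ref{thm: Q-effective adjunction implies weak effective Gamma-base point freeness}, using the compatibility of the b-divisor traces under pull-back so that the discrepancies of $(Z,\Xi+M)$ match those of the sub-klt pair $(W,(\ti{\bf D}_{\di})_W+M_W)$. Hence $Z$ is the log canonical model of the klt pair $(Z,\Xi+M)$ of log general type, with $\operatorname{vol}(K_Z+\Xi+M)=\operatorname{vol}_Z(A)=\Ivol(K_X+D)=v$.

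It remains to invoke the known results for pairs of log general type, all constants now depending only on $d$ and $I$. For Conjecture \ref{conj: bounded of base, klt}: the pairs $(Z,\Xi+M)$ produced above are klt of log general type with $\dim Z\le d$, $K_Z+\Xi+M$ ample, coefficients in the fixed DCC set $J^{*}$, and volume $v$; by the boundedness of log canonical models of log general type pairs with coefficients in a DCC set and fixed volume (\cite{HMX14}; cf.\ \cite[Proposition 4.1]{Li20a}), the underlying varieties $Z$ form a bounded family, being a finite union over $\dim Z\in\{0,1,\dots,d\}$ of bounded families. For Conjecture \ref{conj: DCC}: the set of Iitaka volumes is contained in the union, over $0\le n\le d$, of the sets $\bigl\{\operatorname{vol}(K_Z+\Delta)\mid\dim Z=n,\ \Delta\in J^{*},\ K_Z+\Delta\text{ ample}\bigr\}$ (the summand $n=0$ being a set of positive integers), and each such set is DCC by the DCC for volumes (\cite{HMX14}), while a finite union of DCC sets is DCC. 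I expect the main obstacle to be the third step: producing the effective divisor $M$ on $Z$ itself with coefficients in a set depending only on $d$ and $I$, and verifying that $(Z,\Xi+M)$ is genuinely klt --- the interplay between $\Rr$-linear equivalence and honest equality of divisors, and the behaviour of the sub-klt property under the push-forward $\sigma_{*}$, must be handled with the care used in the proof of Theorem \ref{thm: Q-effective adjunction implies weak effective Gamma-base point freeness}.
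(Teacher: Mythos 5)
Your proposal follows essentially the same route as the paper: pass to a good minimal model, apply Conjecture~\ref{conj: Gamma-effective adjunction}~(2) to the resulting klt-trivial fibration $g\colon(X',D')\to Z$, convert the moduli b-divisor into an honest effective divisor $M$ on $Z$ with coefficients in a set controlled by $d$ and $I$ via general members of the base-point free systems and pushforward, and then cite boundedness/DCC results for the ample klt pair $(K_Z+\Xi+M)$ of general type. Your filled-in version of the middle step (the Bertini argument on a common model $W$, and the descent of the sub-klt property to $Z$ via part (d) of the conjecture) is accurate and matches the argument the paper points to in Step~3 of Theorem~\ref{thm: Q-effective adjunction implies weak effective Gamma-base point freeness}.

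The one genuine weak point is the final citation for Conjecture~\ref{conj: bounded of base, klt}. You write that boundedness of klt pairs of log general type with coefficients in a DCC set and fixed volume follows from \cite{HMX14}, with a ``cf.'' to \cite[Proposition~4.1]{Li20a}. But the relevant result in the literature is \cite[Theorem~1.1]{HMX18}, and that theorem is stated only for coefficients in a DCC subset of $[0,1]\cap\Qq$. The pair $(Z,\Xi+M)$ you construct has coefficients in a DCC set $J^*$ that a priori contains irrational numbers: the set $\Gamma$ of Conjecture~\ref{conj: Gamma-effective adjunction} is only assumed to be a finite subset of $(0,1]$, not of $\Qq$, and even when $I\subset\Qq$ the discriminant coefficients coming from the canonical bundle formula need not be rational, nor need the coefficients produced by the decomposition theorems (cf.\ the remark after Theorem~\ref{thm: DCC to finite}). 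The paper therefore has to prove a real-coefficient, klt version of \cite[Theorem~1.1]{HMX18} --- namely Theorem~\ref{thm: HMX18 klt} --- by combining \cite[Theorem~1.6]{HMX14} with \cite[Theorem~1.1]{HMX18} and a deformation-invariance argument; this is a nontrivial technical step, not a matter of looking up the right theorem. Your argument is otherwise correct, including the case analysis over $\dim Z$ and the citation of \cite[Theorem~1.3]{HMX14} for the DCC of volumes in the proof of Conjecture~\ref{conj: DCC} (that result does handle real coefficients).
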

\begin{proof}
Replacing $(X, D)$ by a good minimal model, we can assume that $K_X+D$ is semi-ample with $f: X \to Z$ the morphism induced by $K_X+D$.  By Conjecture \ref{conj: Gamma-effective adjunction} (2), there exist $m \in \Nn$, a DCC set $J \in (0,1]$ and a finite set $\Gamma$ such that 
\begin{enumerate}
\item there is a b-divisor $\tilde {\bf D}_{\di}$, such that $(\tilde {\bf D}_{\di})_{Z} \in J$,
\item there is a b-divisor $\tilde {\bf D}_{\mo}$, such that $m\tilde {\bf D}_{\mo}$ is $\Gamma$-base-point free, 
\item $(Z, \tilde {\bf D}_{\di})$ is lc and $K_X+D \sim_{\Rr} f^*(K_Z+(\ti {\bf D}_{\di})_Z+(\ti {\bf D}_{\mo})_Z)$.
 \end{enumerate}

As in the Step 3 in the proof of Theorem \ref{thm: Q-effective adjunction implies weak effective Gamma-base point freeness}, there exists  an effective divisor $\ti D_Z \sim_{\Rr} (\ti{\bf D}_{\mo})_Z$ such that $(Z, (\ti{\bf D}_{\di})_Z+\tilde D_{Z})$ is klt and the coefficients of $(\ti{\bf D}_{\di})_Z+\tilde D_{Z}$ belong to a DCC set depending only on $d$ and $I$.

Notice that $K_Z+(\ti{\bf D}_{\di})_Z+\tilde D_{Z}$ is ample with $\vol(K_Z+(\ti{\bf D}_{\di})_Z+\tilde D_{Z}) = \Ivol(K_X+B)$. Then Conjecture \ref{conj: bounded of base, klt}  follows from Theorem \ref{thm: HMX18 klt} below and Conjecture \ref{conj: DCC} follows from \cite[Theorem 1.3 (1)]{HMX14}.
\end{proof}

The following result is needed in the above argument. It is essentially  \cite[Theorem 1.1]{HMX18} for real klt pairs. The argument combines \cite[Theorem 1.6]{HMX14} and \cite[Theorem 1.1]{HMX18}. For technical reasons, we need to assume the log pairs are klt instead of lc. However, the result is expected to still hold true for lc pairs.

\begin{theorem}\label{thm: HMX18 klt}
Fix an integer $d$, a positive real number $v$ and a set $I \subset (0,1]$ which satisfies the DCC. Then the set $\mathfrak F(d, v, I)$ of all log pairs $(X, \De)$ such that
\begin{enumerate}[{\rm (i)}]
\item $X$ is projective of dimension $d$,
\item $(X, \De)$ is klt,
\item the coefficients of $\De$ belong to $I$,
\item $K_X+\De$ is an ample $\Rr$-Cartier divisor, and
\item $(K_X+\De)^d=v$,
\end{enumerate}
is bounded. Besides, there is a finite set $I_0$ such that $\mathfrak F(d, v, I)=\mathfrak F(d, v, I_0)$.
\end{theorem}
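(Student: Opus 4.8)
For the last statement above (Theorem \ref{thm: HMX18 klt}), here is the plan.

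The plan is to reduce this real-coefficient statement to the case of finitely many rational coefficients, namely \cite[Theorem 1.1]{HMX18}, by combining the log birational boundedness of log canonical models with the decomposition results of Section \ref{sec: decomposition theorems}. Since $K_X+\De$ is ample, the pair $(X,\De)$ is its own log canonical model, so by \cite[Theorem 1.6]{HMX14} and its proof the family $\mathfrak F(d,v,I)$ is log birationally bounded: there is a projective morphism $(\mathcal Z,\mathcal B)\to T$ with $T$ of finite type and $\mathcal B$ a reduced divisor such that every $(X,\De)\in\mathfrak F(d,v,I)$ fits into a diagram $\phi_t\colon Z_t\to X$ for some closed point $t\in T$, with $\Supp\De$ and the exceptional locus of $\phi_t$ contained in $\mathcal B_t$. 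After stratifying $T$ and replacing $\mathcal Z$ by a log resolution of $(\mathcal Z,\mathcal B)$ we may assume each $Z_t$ is $\Qq$-factorial and log smooth over its stratum; writing $\Gamma_t\geq 0$ for the divisor with $K_{Z_t}+\Gamma_t=\phi_t^*(K_X+\De)$, the coefficients of $\Gamma_t$ lie in $I$ on the non-exceptional components of $\mathcal B_t$ and in $[0,1)$ on the exceptional ones, $X$ is the ample model of $(Z_t,\Gamma_t)$, and $\vol(K_{Z_t}+\Gamma_t)=v$. The whole argument then takes place inside this fixed bounded family, which is what makes intersection numbers, ample cones and volumes uniformly controllable.

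On the bounded family I would first apply Theorem \ref{thm: DCC to finite} to replace the $I$-coefficients of $\Gamma_t$, from above and by an amount at most $\tau$, by coefficients in a finite set $I_0$ depending only on $d$ and $I$; choosing $\tau$ small enough, uniformly over the family (the relevant intersection numbers on $\mathcal Z$ being bounded), keeps the volume in, say, $[v,v+1]$. Next I would apply Theorem \ref{thm: Nak Corollary}, grouping the components of the new boundary according to which element of $I_0$ they carry so that the number of coefficient variables is at most $|I_0|$: this writes the resulting pair as a convex combination $\sum_{j=1}^{s}r_j(Z_t,\Gamma_t^{(j)})$ of at most $s$ pairs, with $s$ depending only on $d$ and $I$, whose boundaries have coefficients in a fixed finite rational set, with every coefficient within $\ep$ of the corresponding coefficient of $\Gamma_t$, and with each $(Z_t,\Gamma_t^{(j)})$ klt (here one uses that $(X,\De)$ is klt together with the inclusion of lc places supplied by Theorem \ref{thm: Nak Corollary}). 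Since $\Gamma_t^{(j)}$ is within $\ep$ of $\Gamma_t$, the divisor $K_{Z_t}+\Gamma_t^{(j)}$ is big with volume at most $v+1$; running a $(K_{Z_t}+\Gamma_t^{(j)})$-MMP, which terminates by \cite{BCHM10} as the pair is klt with big boundary, and passing to the ample model yields a klt pair $(X^{(j)},\De^{(j)})$ of dimension $d$ with $K_{X^{(j)}}+\De^{(j)}$ ample, boundary coefficients in the fixed finite rational set, and $\vol(K_{X^{(j)}}+\De^{(j)})\leq v+1$. Together with the DCC of volumes (\cite[Theorem 1.3]{HMX14}), \cite[Theorem 1.1]{HMX18} then shows that the pairs $(X^{(j)},\De^{(j)})$ form a bounded family. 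Because $[K_{Z_t}+\Gamma_t]=\sum_j r_j[K_{Z_t}+\Gamma_t^{(j)}]$ with the $\Gamma_t^{(j)}$ uniformly close to $\Gamma_t$, the ample model $X$ of $(Z_t,\Gamma_t)$ is pinned down by the (now finite, over the bounded family) MMP chamber data coming from the bounded pairs $(X^{(j)},\De^{(j)})$; hence $X$, and then $(X,\De)$ — whose boundary has coefficients in the DCC set $I$, bounded below, with $K_X+\De$ ample of volume $v$ — lie in a bounded family.

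For the last assertion one shows that only finitely many coefficient values occur in $\mathfrak F(d,v,I)$: otherwise the DCC of $I$ yields a strictly increasing sequence of such coefficients with a limit, which — since klt forces all coefficients to be $<1$ — is attained in $(0,1)$ by an lc pair; transporting to the log birationally bounded family and invoking the strict monotonicity of the volume function under enlarging an ample $\Rr$-divisor then contradicts $\vol\equiv v$, exactly as in the proof of \cite[Theorem D]{HMX14}. The main obstacle is the reduction of the second paragraph: one must ensure that perturbing $\De$ to rational coefficients neither destroys the existence of a $d$-dimensional ample model nor changes its volume uncontrollably, and one must transport boundedness of the rational approximants back to $(X,\De)$. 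This is precisely why the perturbation has to be carried out inside the log birationally bounded family, where the classes $[K_{Z_t}+\Gamma_t]$ range over a fixed rational polytope of $N^1$-classes, the MMP chamber decomposition over the family is finite, and the ample models of all perturbed pairs remain bounded; it is at this step that \cite[Theorem 1.1]{HMX18} is the indispensable input. The klt hypothesis is used both to keep the decomposed pairs klt and to run the relevant minimal model programs unconditionally.
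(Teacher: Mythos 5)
Your overall skeleton (log birational boundedness, then reduction to finitely many rational coefficient values, then descent of boundedness to $(X,\De)$) is reasonable, but two of the load-bearing steps do not work as written. First, the boundedness of the rational approximants $(X^{(j)},\De^{(j)})$: after perturbing by Theorem \ref{thm: DCC to finite} and decomposing by Theorem \ref{thm: Nak Corollary}, you only know $\vol(K_{Z_t}+\Gamma_t^{(j)})$ lies in an interval such as $[v-\delta,v+1]$, not that it equals a fixed number. \cite[Theorem 1.1]{HMX18} is a fixed-volume statement, and ``DCC of volumes'' does not rescue this: a DCC set bounded above can still be infinite (DCC only forbids infinite strictly decreasing chains), so you cannot conclude that only finitely many volumes occur and then apply the cited theorem finitely often. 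The way to make this step honest is to use deformation invariance of log plurigenera over the log smooth family (\cite[Corollary 1.4]{HMX18}), so that for each of the finitely many strata and each of the finitely many rational coefficient vectors the volume is constant; but that is exactly the ingredient the paper's proof is organized around, and it is absent from your argument. (The paper in fact uses it first to show directly that only finitely many coefficient values of $\De$ occur, before any boundedness statement; your sketch of the ``finite $I_0$'' part via ``strict monotonicity of the volume'' likewise silently needs this invariance to compare volumes of pairs sitting on different fibers.)

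Second, the descent step ``the ample model $X$ of $(Z_t,\Gamma_t)$ is pinned down by the (now finite, over the bounded family) MMP chamber data coming from the bounded pairs $(X^{(j)},\De^{(j)})$'' is an assertion, not a proof. Boundedness of the individual ample models of the $(Z_t,\Gamma_t^{(j)})$ does not bound the ample model of the convex combination: the $\Gamma_t^{(j)}$ may lie in different Mori chambers, and their ample models need not agree with, or map to, $X$. What is actually needed is finiteness of log canonical (ample) models for all divisors in the relevant polytope, uniformly over the parameter space, i.e.\ \cite[Lemma 9.1]{HMX14} resting on \cite[Corollary 1.1.5]{BCHM10} (this is also precisely where the klt hypothesis enters); the paper invokes this, after scaling the exceptional part by $1-\ep$, to conclude that every $(X,\De)$ is a fiber of one of finitely many families $(W_i,B_i)/U$. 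Without such a finiteness-of-models statement your final ``hence $X$ lies in a bounded family'' does not follow. A minor additional point: your $\Gamma_t$ defined by $K_{Z_t}+\Gamma_t=\phi_t^*(K_X+\De)$ need not be effective (klt pullbacks have exceptional coefficients $<1$ but possibly negative), so you should either take the positive part or work, as in \cite{HMX18}, with the strict transform of $\De$ plus (a fraction of) the reduced exceptional divisor and argue via equality of volumes.
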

\begin{proof}[Sketch of the Proof]
Following the argument of \cite[Proposition 7.3]{HMX18}, there is a projective morphism $Z \to U$ and a log smooth pair $(Z, B)$ over $U$ such that if $(X, \De) \in \mathfrak F(d, v, I)$, then there is a closed point $u \in U$ and a birational map $f_u: Z_u \dto X$ such that 
\[
\vol(Z_u, K_{Z_u}+\Phi) =v,
\] where $\Phi \leq B_u$ is the sum of the strict transform of $\De$ and the $f_u$-exceptional divisors. \cite[Lemma 2.2.2]{HMX18} implies that $f_u$ is the log canonical model of $(Z_u, \Phi)$. Besides, every stratum of $B$ has irreducible fibers over $U$ (see  \cite[Lemma 7.2]{HMX18}). Notice that the above argument works for real coefficients, rational coefficient assumption is used in  \cite[Corollary 1.3]{HMX18}. 

Now we claim that there is a finite set $I_0$ such that $\De \in I_0$. As every stratum of $B$ has irreducible fibers over $U$, we can identify components of $\De$ with components of $B$. If the claim were false, then by $B$ fixed and $\Phi \in I \cup \{1\}$, we can assume that there are two pairs $(Z_{u_i}, \Phi_i), u_i \in U, i=1,2$ such that the coefficients of $f_{1*}(\Phi_1)$ is less or equal to the corresponding coefficients of $f_{2*}(\Phi_2)$, and the inequality holds for at least one pair of coefficients. In other words, let $\Phi_{u_1}^{(2)}$ be the divisor on $Z_{u_1}$ whose coefficients are chosen as $\Phi_2$, then we have $f_{1*}(\Phi_1) <f_{1*}(\Phi_{u_1}^{(2)})$. By invariance of the plurigenera (\cite[Corollary 1.4]{HMX18}),
\begin{equation}\label{eq: vol same}
\vol(Z_{u_1}, \Phi_{u_1}^{(2)}) = \vol(Z_{u_2}, \Phi_{2})=v.
\end{equation} In fact, for $m \in \Nn$ sufficiently large, $(Z_{u_2}, \frac{\lf m\Phi_{2} \rf}{m})$ has the log canonical model $(X_i, \frac{\lf mB_2 \rf}{m})$. As $ \frac{\lf m\Phi_{2} \rf}{m} \in \Qq$, applying \cite[Corollary 1.4]{HMX18}, we have $h^0(Z_{u_1}, \frac{k\lf m\Phi_{u_1}^{(2)} \rf}{m}) = h^0(Z_{u_2}, \frac{k\lf m\Phi_{2} \rf}{m})$ for $k \in \Nn$ such that $m \mid k$. Hence, by the definition of volume,  \eqref{eq: vol same} holds true. By  \cite[Lemma 2.2.2]{HMX18}, as $\Phi_{u_1}^{(2)} > \Phi_{1}$, we see that $f_1: Z_{u_1} \dto X_1$ is the log canonical model for both $(Z_{u_1}, \Phi_1)$ and $(Z_{u_1}, \Phi_{u_1}^{(2)})$. As ${f_1}_*(\Phi_1) < {f_1}_*(\Phi_{u_1}^{(2)})$, this contradicts to $(K_{X_1}+{f_1}_*(\Phi_1))^d = (K_{X_1}+{f_1}_*(\Phi_{u_1}^{(2)}))^d=v$.

Without loss of generality, we can fix the coefficients of $\De$. Then for the corresponding $\Phi$, it can be written in two parts $\Phi=\Phi^c+\Phi^e$ where $\Phi^c$ corresponding to $\De$ and $\Phi^e$ corresponding to $f_u$-exceptional divisors. By $(X, \De)$ klt, we can choose $\ep>0$ sufficiently small such that $(Z_u, \Phi^c+(1-\ep)\Phi^e)$ still has the log canonical model $(X, \De)$. We claim that for any other $(X', \De')$ which corresponds to the closed point $u' \in U$, the log canonical model of $(Z_{u'}, {\Phi'}^c+(1-\ep) {\Phi'}^e)$ is still $(X', \De')$. Notice that by the choice of $u'$, the log canonical model of $(Z_{u'}, {\Phi'}^c+{\Phi'}^e)$ is $(X', \De')$. By $\vol(K_{Z_u}+ \Phi^c+(1-\ep)\Phi^e)=\vol(K_{Z_{u'}}+{\Phi'}^c+(1-\ep) {\Phi'}^e)=v$, we know that $(Z_{u'}, {\Phi'}^c+{\Phi'}^e)$ has some log canonical model $Z_{u'} \dto X''$ (\cite[Corollary 1.2]{Li20b}). As $$\vol(K_{Z_{u'}}+{\Phi'}^c+(1-\ep) {\Phi'}^e)=\vol(K_{Z_{u'}}+{\Phi'}^c+{\Phi'}^e)=v,$$ \cite[Lemma 2.2.2]{HMX18} implies that $Z_{u'} \dto X''$ is still the log canonical model of $(Z_{u'}, {\Phi'}^c+{\Phi'}^e)$. Hence $X''=X'$ by the uniqueness of the log canonical model (\cite[Lemma 3.6.6 (1)]{BCHM10}), and the claim is proved.

Finally, we proceed as in the proof of \cite[Theorem 1.6]{HMX14}. By \cite[Lemma 9.1]{HMX14} (this step needs klt as it uses \cite[Corollary 1.1.5]{BCHM10}), there are finite morphisms $g_i: Z \dto W_i/U$ such that for $t\in U$ and for any $0 \leq \Psi \leq {\Phi_t}^c+(1-\ep) {\Phi_t}^e$, the log canonical model of $(Z_t, \Psi)$ is $f_{it}$ for some $i$ (it seems that \cite[Lemma 9.1]{HMX14} assume $\Psi \in \Qq$ implicitly as it takes the log canonical model to be the $\proj$ of the log canonical ring. However, this also holds for $\Psi \in \Rr$ as there exists $\Psi' \in \Qq$ with $\Psi' \leq \Psi$ such that $(Z_t, \Psi')$ and $(Z_t, \Psi)$ have the same log canonical model). In particular, this shows that $(X, \De)$ belongs to a fiber of $(W_i, B_i)/U$, and thus in a bounded family.
\end{proof}

\bibliographystyle{alpha}
\bibliography{bibfile}
\end{document}